\theoremstyle{definition}
\newtheorem{definition}{Definition}[section]
\newtheorem{lemdef}[definition]{Lemma \& Definition}
\newtheorem{construction}[definition]{Construction}
\theoremstyle{theorem}
\newtheorem{theorem}[definition]{Theorem}
\newtheorem{corollary}[definition]{Corollary}
\newtheorem{lemma}[definition]{Lemma}
\newtheorem{problem}[definition]{Problem}
\newtheorem{proposition}[definition]{Proposition}
\newtheorem{example}[definition]{Example}
\newtheorem{Remark}[definition]{Remark}
\newtheorem*{remark}{Remark}
\theoremstyle{theorem}
\DeclareMathOperator{\id}{id}
\newcommand{\sijto}{\mathrel{\circ \hspace{-1.8mm} = \hspace{-2.2mm} > \hspace{-2.2mm} = \hspace{-1.8mm} \circ}}
\newcommand{\subsijto}{\mathrel{\circ \hspace{-0.8mm} = \hspace{-1.6mm} > \hspace{-1.6mm} = \hspace{-0.8mm} \circ}}
\newcommand{\sint}[2]{\underline{\left[{#1},{#2}\right)}}
\newcommand{\SPP}[2]{\operatorname{SPP}\left({#1};{#2}\right)}
\newcommand{\QTCPP}[2]{\operatorname{QTCPP}\left({#1};{#2}\right)}
\newcommand{\eSPP}[2]{\operatorname{e-SPP}\left({#1};{#2}\right)}
\newcommand{\stairPP}[2]{\operatorname{stair-PP}\left({#1};{#2}\right)}
\newcommand{\pstairPP}[2]{\operatorname{p-stair-PP}\left({#1};{#2}\right)}
\newcommand{\SSYT}[2]{\operatorname{SSYT}\left({#1};{#2}\right)}
\newcommand{\sh}[1]{\operatorname{sh}\left({#1}\right)}
\newcommand{\sgn}[1]{\operatorname{sgn}\left({#1}\right)}
\newcommand{\supp}[1]{\operatorname{supp}\left({#1}\right)}
\newcommand{\rw}[1]{\textcolor{red}{#1}}
\newcommand{\EllipseABb}[5]{
  \pgfmathsetmacro{\Cx}{(#1 + #3)/2}%
  \pgfmathsetmacro{\Cy}{(#2 + #4)/2}%
  \pgfmathsetmacro{\a}{veclen(#3-#1, #4-#2)/2}%
  \pgfmathsetmacro{\ang}{atan2(#4-#2, #3-#1)}%
  \begin{scope}[shift={(\Cx,\Cy)}, rotate=\ang]
    \draw[cyan, thick] ellipse [x radius=\a, y radius={#5}];
  \end{scope}%
}
\begin{document}

\title{
A bijection between symmetric plane partitions and quasi transpose complementary plane partitions
}
\author{
Takuya Inoue
\footnote{
Graduate School of Mathematical Sciences, the University of Tokyo,
3-8-1 Komaba, Meguro-ku, 153-8914 Tokyo, Japan.
Email: inoue@ms.u-tokyo.ac.jp
}
}
\date{}
\maketitle

\begin{abstract}
We resolve the explicit bijection problem between symmetric plane partitions (SPPs) and quasi transpose complementary plane partitions (QTCPPs), introduced by Schreier--Aigner, who proved their equinumerosity. 
First, we relate this problem to Proctor's parallel equinumerosities for SPPs, even SPPs, staircase plane partitions, and parity staircase plane partitions, by constructing several bijections. 
As a result, we reduce our task to constructing a \emph{compatible} bijection between even SPPs and staircase plane partitions. 
We then provide non--intersecting lattice path configurations for these objects, apply the LGV lemma, and transform the resulting path configurations. 
This process leads us to new combinatorial objects, \(I_m\) and \(J_m\), and the task is further reduced to constructing a compatible \emph{sijection} (signed bijection) \(I_m\sijto J_m\), which is carried out in the final part of this paper. 
Our construction also answers the 35--year--old open problem posed by Proctor: constructing an explicit bijection between even SPPs and staircase plane partitions.
\end{abstract}

\tableofcontents

\section{Introduction} \label{sec:intro}
The aim of bijective combinatorics is to construct explicitly described families of bijections, using combinatorial methods, between certain combinatorial objects whose cardinalities are already known to be equal, often by computational or algebraic techniques.
Such bijections can reveal additional and sometimes unexpected mathematical structures underlying known enumerative results, thereby providing insights that are not apparent from counting formulas alone.
In particular, in the context of enumeration problems related to plane partitions, several well-known open problems in bijective combinatorics remain.

This paper settles the explicit bijection problem between symmetric plane partitions (SPPs) and quasi transpose complementary plane partitions (QTCPPs).
In \cite{SAF24}, Schreier-Aigner proves the equinumerosity
\[
  \#\SPP{n}{M}=\#\QTCPP{n}{M},
\]
via determinant methods, but leaves the construction of a bijection open.
We construct an explicit bijection between \(\SPP{n}{M}\) and \(\QTCPP{n}{M}\) by considering a connection with a result of Proctor.
In \cite{Pro90}, Proctor (1990) establishes two parallel equinumerosities among four classes of plane partitions---SPPs, even symmetric plane partitions (even SPPs), staircase plane partitions (stair-PPs), and parity staircase plane partitions (p-stair-PPs):
\[
  \#\SPP{n}{M}=\#\pstairPP{n}{M}, \qquad
  \#\eSPP{n}{m}=\#\stairPP{n}{m},
\]
using representation–theoretic arguments.
The definitions of these four classes will be recalled in Section~2.
For the case of e-SPPs and stair-PPs, Sheats constructed a bijection in 1999~\cite{She99}. 
On the other hand, the other case still remains open.
In this paper we resolve this open problem, which has remained unsolved for 35 years, as well as the problem between SPPs and QTCPPs.

Let us outline our construction.
As we explain in Section~3, there is a simple bijection
\(\QTCPP{n}{M} \to \pstairPP{n}{M}\),
as well as bijections providing certain relations ---which we call \(1:2^{\#S}\) correspondences--- between
\(\pstairPP{n}{M}\) and \(\stairPP{n}{m}\), and between
\(\SPP{n}{M}\) and \(\eSPP{n}{m}\):
\begin{align*}
\SPP{n}{2m+1} &\to \eSPP{n}{m} \times \{0,1\}^n,&
\SPP{n}{2m} &\to \bigsqcup_{\pi' \in \eSPP{n}{m}} \{0,1\}^{S(\pi')}, \\
\pstairPP{n}{2m+1} &\to \stairPP{n}{m} \times \{0,1\}^n,&
\pstairPP{n}{2m} &\to \bigsqcup_{\pi' \in \stairPP{n}{m}} \{0,1\}^{S(\pi')}.
\end{align*}
As we will show, by these relations, the bijection problem between QTCPPs and SPPs is reduced to the problem of constructing a bijection
\[
  f:\ \eSPP{n}{m}\longrightarrow \stairPP{n}{m},
\]
together with bijections \(g_\pi:S(\pi)\to S\bigl(f(\pi)\bigr)\),
where \(S\) denotes a set--valued function on certain classes of plane partitions introduced in Section~3.
%where $S$ is a set-valued function on some classes of plane partitions in Section~3.
Such $f$ must be \textit{compatible} with the statistic $\# S$, according to the notion of compatibility introduced by the author in \cite{Inoue}.
It is important to note that Sheats's construction does not satisfy this compatibility condition.
We will provide a compatible construction by means of \textit{sijections} (signed bijections) in Sections~4--7.

\begin{figure}
\begin{center}
\begin{tikzpicture}[
  box/.style={draw=none, align=center},
  every node/.style={font=\small}
]

% カテゴリラベル
%\node at (-3,3.5) {\textbf{shifted staircase}};
%\node at (3.5,3.5) {\textbf{staircase}};

% 左側ノード
\node[box] (spp) at (-4,1) {\( \SPP{n}{M} \)};
\node[box] (espp) at (-4,-1) {\( \eSPP{n}{m} \)};

% 右側ノード（QTCPPとstairPPをx=2.5に揃える）
\node[box] (qtcpp) at (2.5,1) {\( \QTCPP{n}{M} \)};
\node[box] (pstair) at (5.4,1) {\( \pstairPP{n}{M} \)};
\node[box] (stairpp) at (2.5,-1) {\( \stairPP{n}{m} \)};

% equinumerous 横線
\draw[-] (spp.east) -- (qtcpp.west) node[midway, above] {equinumerosity by Aigner};
\draw[-] (espp) -- (-4,-2);
\draw[-] (-4,-2) -- (2.5,-2) node[midway, below] {equinumerosity by Proctor};
\draw[-] (2.5,-2) -- (stairpp);
\draw[<->,red] (qtcpp.east) -- (pstair.west) node[midway, above] {bij.};
\draw[<->,red] (espp.east) -- (stairpp.west) node[midway, above] {a compatible bij. w/$S$};

% 縦破線
%\draw[dashed] (0.5,-1.8) -- (0.5,3.3);

% 赤文字ラベルを矢印と重ならないようにずらす
\node[box, text=red] at (-2.3,0) {$1:2^S$ correspondence};
\node[box, text=red] at (3.7,0) {$1:2^S$ correspondence};

% 上下の赤い両向き矢印（中央）
\draw[<->, red, thick] (spp.south) -- (espp.north);
%\draw[<->, red, thick] (qtcpp.south) -- (stairpp.north);

% p-stair と stairPP を ┐ ┘型の折れ線矢印で結ぶ
\draw[<->, red, thick] (pstair.south) |- (stairpp.east);

%SPP-p-stairPP
\draw[-] (spp) -- (-4,2);
\draw[-] (-4,2) -- (5.4, 2) node[midway, above] {equinumerosity by Proctor};
\draw[-] (5.4, 2) -- (pstair);

% 外枠
\draw[thick] (-6,-3.2) rectangle (7,2.9);

\end{tikzpicture}
\caption{An outline of our construction. The red arrows indicate the bijections and relations established in this paper, which together form the desired bijection $\SPP{n}{M} \to \QTCPP{n}{M}$.}
\end{center}
\end{figure}

In Sections~4 and~5 we describe a non–intersecting path configuration for \(\stairPP{n}{m}\), based on the piles–of–cubes picture. 
Applying the LGV lemma \cite{GV,Lind} to such a configulation, we obtain a sijection
\[
  \stairPP{n}{m}
    \;\to\; \mathcal{P}_{\Gamma'}^{\mathrm{(NI)}}(\mathbf{a},\mathbf{b})
    \;\overset{\mathrm{LGV}}{\sijto}\;
    \mathcal{P}_{\Gamma'}(\mathbf{a},\mathbf{b}),
\]
where the non–intersection condition is relaxed.
After organizing this path configuration, we introduce a signed index set \(I_m\) together with statistics \(\eta_i\) on \(I_m\),
which allows us to obtain the following decomposition
\[
  \stairPP{n}{m}
  \;\sijto\;
  \bigsqcup_{\alpha\in I_m}
  \prod_{i=1}^m \mathcal{C}\bigl(2n,\; n+\lvert\eta_i\rvert(\alpha)\bigr).
\]

In Section~6 we then carry out a parallel analysis for \(\eSPP{n}{m}\). 
We begin by giving a path model with a movable boundary that encodes the condition defining \(\eSPP{n}{m}\). 
As in the staircase case, we then apply the LGV lemma \cite{GV,Lind} to pass from non–intersecting paths to general paths. 
By reorganizing the resulting path configuration and introducing another signed index set \(J_m\) together with statistics \(\eta_i\) on \(J_m\), we obtain a decomposition
\[
  \eSPP{n}{m}
  \;\sijto\;
  \bigsqcup_{\sigma\in J_m}
  \prod_{i=1}^m \mathcal{C}\bigl(2n,\; n+\lvert\eta_i\rvert(\sigma)\bigr).
\]
This expression is directly comparable with the decomposition for \(\stairPP{n}{m}\) obtained in Section~5.

Finally, in Section~7 we build a good sijection \(I_m \sijto J_m\) that matches a certain condition to provide a sijection
\[
  \bigsqcup_{\alpha\in I_m}
  \prod_{i=1}^m \mathcal{C}\bigl(2n,\; n+\lvert\eta_i\rvert(\alpha)\bigr)
  \sijto 
  \bigsqcup_{\sigma\in J_m}
  \prod_{i=1}^m \mathcal{C}\bigl(2n,\; n+\lvert\eta_i\rvert(\sigma)\bigr),
\]
which then completes the entire construction.
\medskip

\noindent\textit{Organization of this paper.}
Section~2 recalls the classes SPP, QTCPP, even–SPP, stair–PP, and p–stair–PP, and the relations proved in \cite{SAF24,Pro90}, and formulates the bijection problem. 
Section~3 explains the \(1:2^{\#S}\) correspondences and the reduction to the problem of constructing a compatible bijection between even SPPs and stair-PPs. 
Section~4 fixes notation on signed sets, sijections, and compatibility, and reviews the LGV framework. 
Section~5 develops the path model and transformations on the staircase side and introduces \(I_m\).
Section~6 does the same on the even–SPP side and introduces \(J_m\).
Section~7 relates \(I_m\) and \(J_m\) and assembles the constructions into the bijection between \(\SPP{n}{M}\) and \(\QTCPP{n}{M}\). 
Section~8 demonstrates an example of the full sijection.
Section~9 gives a short conclusion.

\section{Definitions}

\subsection{The SPP-QTCPP bijection problem}

In this section we describe the bijection problem that is the main subject of this paper. We begin by giving some basic definitions.

\begin{definition}
Let $\lambda = (\lambda_1 \geq \lambda_2 \geq \cdots \geq \lambda_r)$ be a partition.
A \textit{plane partition} of shape $\lambda$ is a $2$-dimensional array $\pi \colon \{(i,j) \mid 1 \leq i \leq r, 1 \leq j \leq \lambda_i \} \to \mathbb{Z}_{\geq 0}$ such that
\begin{itemize}
\item $\pi_{i,j} \geq \pi_{i,j+1}$ when $1 \leq i \leq r$ and $1 \leq j \leq \lambda_i-1$,
\item $\pi_{i,j} \geq \pi_{i+1,j}$ when $1 \leq i \leq r-1$ and $1 \leq j \leq \lambda_{i+1}$.
\end{itemize}
\end{definition}

\begin{definition}
Let $\lambda = (\lambda_1 > \lambda_2 > \cdots > \lambda_r)$ be a strict partition.
A \textit{shifted plane partition} of shape $\lambda$ is a $2$-dimensional array $\pi \colon \{(i,j) \mid 1 \leq i \leq r, i \leq j \leq i+\lambda_i-1 \} \to \mathbb{Z}_{\geq 0}$ such that
\begin{itemize}
\item $\pi_{i,j} \geq \pi_{i,j+1}$ when $1 \leq i \leq r$ and $i \leq j \leq i+\lambda_i-2$,
\item $\pi_{i,j} \geq \pi_{i+1,j}$ when $1 \leq i \leq r-1$ and $i+1 \leq j \leq i+\lambda_{i+1}$.
\end{itemize}
\end{definition}

For a positive integer $n$, a staircase plane partition of size $n$ is a plane partition of shape $\lambda = (n,n-1,\ldots,1)$.
Similarly for a shifted staircase plane partition of size $n$. For example, the following is a shifted staircase plane partition of size $4$:
\[
	\young(7533,:432,::21,:::0).
\]

\begin{definition}
Let $n \in \mathbb{Z}_{>0}$ and $M \in \mathbb{Z}_{\geq 0}$.
A \textit{symmetric plane partition (SPP)} of size $n$ with upper bound $M$ is a plane partition of shape $\lambda = (n)^n = (n,n,\ldots,n)$ such that $\pi_{1,1} \leq M$ and
\[
	\pi_{i,j} = \pi_{j,i} \quad\text{ for all }\quad 1 \leq i, j \leq n.
\]
\end{definition}

Considering an SPP of size $n$ is equivalent to considering a shifted staircase plane partition of size $n$.
Therefore, we define $\SPP{n}{M}$ as follows:
\[
	\SPP{n}{M} := \{ \pi \colon \text{a shifted staircase plane partition of size $n$ such that $\pi_{1,1} \leq M$} \}.
\]

Next, we define QTCPP, which is a class of plane partitions introduced by Schreier-Aigner \cite{SAF24}.

\begin{definition}
Let $n \in \mathbb{Z}_{>0}$ and $M \in \mathbb{Z}_{\geq 0}$.
A \textit{quasi transpose complementary plane partition (QTCPP)} of size $n$ with upper bound $M$ is a plane partition of shape $\lambda = (n)^n = (n,n,\ldots,n)$ such that 
\[
	\pi_{i,j} = M - \pi_{n+1-j,n+1-i} \quad\text{ if }\quad i+j \ne n+1.
\]
\end{definition}

For a QTCPP, we can recover the entries under the anti-diagonal line from the remaining entries.
We therefore identify it with a staircase plane partition of size $n$, subject to certain conditions.
First, we define the set of staircase plane partitions as follows:
\begin{equation} \label{def::stairPP}
	\stairPP{n}{m} := \{ \pi \colon \text{a staircase plane partition of size $n$ such that $\pi_{1,1} \leq m$} \}.
\end{equation}
Then, the set $\QTCPP{n}{M}$ is defined:
\begin{equation} \label{def::QTCPP}
	\QTCPP{n}{M} := \left\{ \pi \in \stairPP{n}{M} \,\middle|\, 
	\begin{array}{l}
	%\text{a staircase plane partition of size $n$ such that:} \\
	\text{$\pi$ satisfies the conditions:} \\
	%\text{(1) $\pi_{1,1} \leq M,$} \\
	\text{(1) $\pi_{i,j} \geq M-\pi_{i-1,j}$ when $i+j=n+1$ and $2 \leq i \leq n,$} \\
	\text{(2) $\pi_{i,j} \geq M-\pi_{i,j-1}$ when $i+j=n+1$ and $2 \leq j \leq n.$} 
	\end{array}
	\right\}
\end{equation}

For $\pi \in \QTCPP{n}{M}$, when $i+j < n+1$, we have
\begin{equation} \label{eq::QTCPP-M/2}
	\pi_{i,j} \leq \pi_{i,n+1-i} \leq M-\pi_{i,j}, 
\end{equation}
and thus $\pi_{i,j} \geq \frac{M}{2}$.
Schreier-Aigner shows the following equinumerosity in \cite{SAF24}.

\begin{theorem} \label{thm::SPP-QTCPP}
Let $n \in \mathbb{Z}_{>0}$ and $M \in \mathbb{Z}_{\geq 0}$. Then, we have
\[
	\# \SPP{n}{M} = \# \QTCPP{n}{M}.
\]
\end{theorem}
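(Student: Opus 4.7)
The plan is to construct an explicit bijection $\SPP{n}{M} \to \QTCPP{n}{M}$ by chaining together several bijective correspondences, exactly as outlined in Section~1 (and illustrated by the red arrows in Figure~1). First, I would establish a simple bijection $\QTCPP{n}{M} \to \pstairPP{n}{M}$, exploiting the fact that once one drops the entries below the anti-diagonal (which are determined by the remaining ones via $\pi_{i,j} = M - \pi_{n+1-j, n+1-i}$), the boundary conditions~(1) and~(2) in \eqref{def::QTCPP} reduce cleanly to the defining conditions of p-stair-PPs. This step should be essentially a matter of unwinding the definitions.

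Next, I would introduce a set-valued statistic $S$ on SPPs and on p-stair-PPs and establish the $1:2^{\#S}$ correspondences
\[
  \SPP{n}{2m+1} \to \eSPP{n}{m} \times \{0,1\}^n, \qquad
  \SPP{n}{2m} \to \bigsqcup_{\pi' \in \eSPP{n}{m}} \{0,1\}^{S(\pi')},
\]
and the analogous pair relating $\pstairPP{n}{M}$ to $\stairPP{n}{m}$. These rely on a parity/halving procedure that factors out the odd-even choices of entries. Their combined effect is to reduce the SPP-QTCPP problem to producing a bijection $f : \eSPP{n}{m} \to \stairPP{n}{m}$ together with bijections $g_\pi : S(\pi) \to S(f(\pi))$ for every $\pi$; that is, an $f$ \emph{compatible} with $\#S$.

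The core of the proof---and the main obstacle---is this compatible bijection. Although Sheats (1999) already produced some bijection $\eSPP{n}{m} \to \stairPP{n}{m}$, his map is not compatible with $S$, so a fresh construction is required. The plan is to work in the framework of sijections. On each side I would set up a non-intersecting lattice path model (based on a piles-of-cubes encoding, with a movable boundary on the even-SPP side to enforce the parity condition), apply the LGV lemma to pass to a signed sum over all (possibly intersecting) configurations, and reorganize the result in terms of signed index sets $I_m$ (on the staircase side) and $J_m$ (on the even-SPP side), each equipped with statistics $\eta_i$. This gives parallel decompositions
\[
  \stairPP{n}{m}
    \sijto
  \bigsqcup_{\alpha \in I_m}
    \prod_{i=1}^m \mathcal{C}\bigl(2n,\; n+\lvert\eta_i\rvert(\alpha)\bigr),
  \qquad
  \eSPP{n}{m}
    \sijto
  \bigsqcup_{\sigma \in J_m}
    \prod_{i=1}^m \mathcal{C}\bigl(2n,\; n+\lvert\eta_i\rvert(\sigma)\bigr),
\]
in which the factors on the two sides are \emph{literally the same} whenever $\lvert\eta_i\rvert$ matches.

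Finally, I would construct a sijection $I_m \sijto J_m$ that preserves the tuple $(\lvert\eta_1\rvert,\ldots,\lvert\eta_m\rvert)$. Such a sijection propagates, factor by factor, to a sijection between the two signed disjoint unions above, which in turn yields a (genuine, unsigned) compatible bijection $\eSPP{n}{m} \to \stairPP{n}{m}$. Composing with the $1:2^{\#S}$ correspondences and the $\QTCPP{n}{M} \leftrightarrow \pstairPP{n}{M}$ bijection then produces the desired explicit bijection $\SPP{n}{M} \to \QTCPP{n}{M}$, and in particular re-proves Theorem~\ref{thm::SPP-QTCPP}. The delicate point throughout is preserving compatibility with $\#S$: both the choice of path model (so that $S$ is visible as a path statistic) and the sijection $I_m \sijto J_m$ (so that $\lvert\eta_i\rvert$ is matched on the nose) must be engineered carefully, and this is precisely where Sheats's map falls short.
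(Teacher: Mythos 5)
Your proposal is correct and follows essentially the same route as the paper: the bijection $\QTCPP{n}{M}\to\pstairPP{n}{M}$, the $1:2^{\#S}$ correspondences on both sides, the reduction to a compatible bijection $\eSPP{n}{m}\to\stairPP{n}{m}$, the two lattice-path models with LGV, and the matching sijection $I_m \sijto J_m$ are exactly the steps carried out in Sections~3--7. (The paper states the theorem itself with a citation to Schreier--Aigner's determinant proof, but its own contribution is precisely the bijective argument you outline.)
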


In \cite{SAF24}, this is proven by deriving a determinant formula for the number of QTCPPs and showing that it coincides with the well-known count of SPPs \cite{Stan86}.
In particular, the corresponding bijective problem was left open.

\begin{problem}
Let $n \in \mathbb{Z}_{>0}$ and $M \in \mathbb{Z}_{\geq 0}$.
Construct a bijection between $\SPP{n}{M}$ and $\QTCPP{n}{M}$.
\end{problem}

\subsection{Four classes of plane partitions considered by Proctor}
Proctor considers four classes of plane partitions in \cite{Pro90}, and showed two equinumerosities that are parallel in a certain sense.
In Proctor's paper, the proofs are given using methods from representation theory and by direct computations, and no explicit bijection is provided there.
However, one of the four classes he considers is SPP and some observations from Proctor's results play an important role in this paper.
We begin by introducing the four classes.

The first class consists of SPP, as mentioned above.
The second class consists of even symmetric plane partitions, which are defined as follows.

\begin{definition}
Let $n \in \mathbb{Z}_{>0}$ and $m \in \mathbb{Z}_{\geq 0}$.
An \textit{even symmetric plane partition (even SPP)} of size $n$, with upper bound $2m$, is an SPP $\pi$ of size $n$ with upper bound $2m$ such that
\[
	\pi_{i,i} \equiv 0 \pmod{2} \quad \text{for} \quad 1 \leq i \leq n.
\]
\end{definition}
The set of even SPPs is defined as follows:
\[
	\eSPP{n}{m} := \{ \pi \in \SPP{n}{2m} \mid \pi_{i,i} \equiv 0 \pmod{2} \,\text{ for all }\, 1 \leq i \leq n \}.
\]

The third class consists of staircase plane partitions, and the set of them is denoted by $\stairPP{n}{m}$ as defined in (\ref{def::stairPP}).
The fourth class consists of parity staircase plane partitions, which are staircase plane partitions satisfying a certain parity condition,
in analogy with even SPPs, the symmetric plane partitions satisfying a similar condition.

\begin{definition}
Let $n \in \mathbb{Z}_{>0}$ and $M \in \mathbb{Z}_{\geq 0}$.
A \textit{parity staircase plane partition} of size $n$ with upper bound $M$ is a staircase plane partition $\pi$ of size $n$ with upper bound $M$ such that 
\[
	\pi_{i,j} \equiv M \pmod{2} \quad \text{when} \quad i+j<n+1.
\]
\end{definition}

The set of parity staircase plane partitions is defined as:
\[
	\pstairPP{n}{M} := \{ \pi \in \stairPP{n}{M} \mid \pi_{i,j} \equiv M \pmod{2} \quad \text{when} \quad i+j<n+1 \}.
\]

Proctor shows the following two equinumerosities in \cite{Pro90}.
\begin{theorem}[Proctor 1990]
Let $n \in \mathbb{Z}_{>0}$ and $m, M \in \mathbb{Z}_{\geq 0}$. Then, it holds that
\begin{align*}
\# \SPP{n}{M} = \# \pstairPP{n}{M}, \\
\# \eSPP{n}{m} = \# \stairPP{n}{m}.
\end{align*}
\end{theorem}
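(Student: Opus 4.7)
The plan is to reduce each equinumerosity to a comparison of product formulas obtained from non-intersecting lattice paths. For each of the four classes, I would use the standard piles-of-cubes correspondence to encode a plane partition as a family of non-intersecting lattice paths (this is in fact the path model that Sections~5--6 of the present paper develop in detail for $\stairPP{n}{m}$ and $\eSPP{n}{m}$; the analogous model for $\SPP{n}{M}$ is classical). The Lindström--Gessel--Viennot lemma then produces a determinant or Pfaffian for each cardinality. For $\#\SPP{n}{M}$ one recovers MacMahon's classical product
\[
\#\SPP{n}{M}=\prod_{1\leq i\leq j\leq n}\frac{M+i+j-1}{i+j-1},
\]
and for $\#\stairPP{n}{m}$ a product formula of the same flavour is standard and evaluates the associated LGV determinant.

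The real work is handling the parity conditions. For $\eSPP{n}{m}$ I would write the diagonal entries as $\pi_{i,i}=2a_{i}$ with $a_{i}\in\{0,\dots,m\}$; substituting this into the path model rescales the relevant endpoints and turns the ``even diagonal'' constraint into a clean (unconstrained) LGV setup on a modified lattice, producing a second determinant to evaluate as a product. For $\pstairPP{n}{M}$ the parity condition applies to all cells strictly above the anti-diagonal, so I would decompose each such entry as $\pi_{i,j}=\varepsilon+2\widetilde\pi_{i,j}$ with $\varepsilon\in\{0,1\}$ matching $M\bmod 2$, reducing the parity-constrained count to an ordinary bounded staircase count on the rescaled array and again producing an LGV determinant.

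With all four cardinalities expressed as products, the two equinumerosities $\#\SPP{n}{M}=\#\pstairPP{n}{M}$ and $\#\eSPP{n}{m}=\#\stairPP{n}{m}$ should then follow by termwise comparison, possibly after small index shifts ($M\leftrightarrow 2m$ or $i+j\to i+j-1$).

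The main obstacle I expect is algebraic rather than conceptual: the parity substitutions above move the endpoints of the underlying non-intersecting paths, and matching the resulting determinants against the MacMahon-type products on the SPP and stair-PP sides is likely to need non-obvious row/column manipulations or an appeal to a Cauchy--Binet or Schur-type determinantal identity. A cleaner alternative---the route Proctor actually takes in \cite{Pro90}---is representation-theoretic: each of the four classes admits an interpretation as a weight multiplicity or a character of a module for a classical Lie group such as $Sp(2n)$ or $SO(2n{+}1)$, and the equinumerosities follow from the Weyl character formula together with appropriate branching rules by specialising the characters at a suitable point. I would attempt the LGV route first and fall back to the character-theoretic argument if the determinantal manipulations become unwieldy.
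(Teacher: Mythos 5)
You should first note that the paper does not prove this theorem at all: it is quoted from \cite{Pro90}, and the paper explicitly records that Proctor's argument is representation-theoretic together with direct computation. Your ``fallback'' paragraph is therefore just the citation the paper already makes, not a new proof. The substance of your proposal is the primary LGV-plus-product-formula route, and that route, as sketched, has a concrete gap. For $\pstairPP{n}{M}$ the parity condition $\pi_{i,j}\equiv M \pmod 2$ holds only for $i+j<n+1$; the anti-diagonal entries are exempt. After the substitution $\pi_{i,j}=\varepsilon+2\widetilde\pi_{i,j}$ on the strict upper region, each anti-diagonal entry still ranges over an interval of length $2\widetilde m+1$ (where $\widetilde m$ is the minimum of the rescaled neighbours), whereas in a genuine staircase plane partition with bound $m$ it would range over an interval of length $\widetilde m+1$. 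So the count does \emph{not} reduce to ``an ordinary bounded staircase count on the rescaled array''; indeed $\#\pstairPP{n}{M}$ equals $\#\SPP{n}{M}$, not $\#\stairPP{n}{\lfloor M/2\rfloor}$, and the mismatch on the anti-diagonal is exactly why Section~3 of the paper has to introduce the $1:2^{\#S}$ correspondences instead of a direct rescaling. Your determinant for the p-stair-PP side would therefore be set up incorrectly from the start.

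The even-SPP side has a related, unaddressed difficulty. The evenness condition constrains only the diagonal entries, and in the path model (cf.\ Section~6) this becomes a condition on which starting points are used, namely $\tau_{2i}=\tau_{2i-1}+1$. The LGV lemma then yields a \emph{sum} of determinants over admissible starting sets (a minor-summation or Pfaffian-type expression), not a single determinant, and evaluating that in product form is precisely the ``direct computation'' part of Proctor's work that you would need to reproduce; writing $\pi_{i,i}=2a_i$ does not by itself produce ``a clean unconstrained LGV setup.'' As it stands the proposal is a plan with the hard steps asserted rather than carried out, and one of those assertions (the p-stair-PP reduction) is false as stated.
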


In fact, Proctor gives a proof of a more refined version of this enumeration.
The definitions of the five classes of plane partitions introduced so far, along with Proctor’s result, suggest that there is a meaningful relationship between 
$\SPP{n}{M}$ and $\eSPP{n}{m}$, as well as a similar relationship between $\pstairPP{n}{M}$ and $\stairPP{n}{m}$.
Furthermore, in combination with Theorem \ref{thm::SPP-QTCPP}, this suggests the existence of a bijection between $\pstairPP{n}{M}$ and $\QTCPP{n}{M}$.
These relationships are summarized in the following diagram.

\begin{center}
\begin{tikzpicture}[
  box/.style={draw=none, align=center},
  every node/.style={font=\small}
]

% カテゴリラベル
\node at (-3,2) {\textbf{shifted staircase}};
\node at (3.5,2) {\textbf{staircase}};

% 左側ノード
\node[box] (spp) at (-4,1) {\( \SPP{n}{M} \)};
\node[box] (espp) at (-4,-1) {\( \eSPP{n}{m} \)};

% 右側ノード（QTCPPとstairPPをx=2.5に揃える）
\node[box] (qtcpp) at (2.5,1) {\( \QTCPP{n}{M} \)};
\node[box] (pstair) at (5.4,1) {\( \pstairPP{n}{M} \)};
\node[box] (stairpp) at (2.5,-1) {\( \stairPP{n}{m} \)};

% equinumerous 横線
\draw[-] (spp.east) -- (qtcpp.west) node[midway, above] {equinumerosity};
\draw[-] (espp.east) -- (stairpp.west) node[midway, below] {equinumerosity};
\draw[-] (qtcpp.east) -- (pstair.west) node[midway, above] {eq.};

% 縦破線
\draw[dashed] (0.5,-1.8) -- (0.5,1.8);

% 赤文字ラベルを矢印と重ならないようにずらす
\node[box, text=red] at (-2.8,0) {\textit{some relation?}};
\node[box, text=red] at (3.7,0) {\textit{some relation?}};

% 上下の赤い両向き矢印（中央）
\draw[<->, red, thick] (spp.south) -- (espp.north);
\draw[<->, red, thick] (qtcpp.south) -- (stairpp.north);

% p-stair と stairPP を ┐ ┘型の折れ線矢印で結ぶ
\draw[<->, red, thick] (pstair.south) |- (stairpp.east);

% 外枠
\draw[thick] (-6,-2.2) rectangle (7,2.4);

\end{tikzpicture}
\end{center}

We briefly review the previous attempts to construct the bijections implied by Proctor's result.
An explicit bijection between $\eSPP{n}{m}$ and $\stairPP{n}{m}$ is constructed by Sheats in 1999  \cite{She99}, using a \textit{symplectic jeu de taquin algorithm},
which is a complicated algorithm analogous to jeu de taquin \cite{Schu} (as cited in \cite{HM}).
On the other hand, for the equinumerosity between $\SPP{n}{M}$ and $\pstairPP{n}{M}$, no explicit bijection is known.
In \cite{Pech}, Pechenik conjectured that some algorithm related to a \text{K-theoretic jeu de taquin} would yield a bijection between $\SPP{n}{M}$ and $\pstairPP{n}{M}$;
however, this is not the case.
%We will give a counterexample in Appendix {\color{red}B}.

\section{$1 : 2^{\# S}$ correspondences} \label{Sec::1-2s}

In this section, we explain the relation between $\SPP{n}{M}$ and $\eSPP{n}{m}$, and between $\QTCPP{n}{M}$ and $\stairPP{n}{m}$.
In subsequent sections we construct a bijection between $\SPP{n}{M}$ and $\QTCPP{n}{M}$ by lifting a bijection between $\eSPP{n}{m}$ and $\stairPP{n}{m}$, using these relations.
For the lifting to work, we impose a certain condition on a bijection between $\eSPP{n}{m}$ and $\stairPP{n}{m}$.
As Sheats's construction does not satisfy this condition, we construct a new suitable bijection in this paper.

\subsection{A bijection between p-stair-PPs and QTCPPs}
First we construct a bijection between $\pstairPP{n}{M}$ and $\QTCPP{n}{M}$.
Although this bijection is not directly used in the construction of the bijection between SPPs and QTCPPs,
it is necessary for connecting our problem with Proctor’s results and for demonstrating that our construction also resolves the bijection problem arising from Proctor’s equinumerousities.
\begin{construction} \label{bij::pst-qtc}
Let $n \in \mathbb{Z}_{>0}$ and $m, M \in \mathbb{Z}_{\geq 0}$.
We define $f_{\ref{bij::pst-qtc}} \colon \pstairPP{n}{M} \to \QTCPP{n}{M}$ as follows.
If $n=1$, then we have $\pstairPP{n}{M} = \QTCPP{n}{M}$, and therefore we define $f_{\ref{bij::pst-qtc}}$ as the identity map. Otherwise, we define
\begin{equation} \label{construction::pst-qtc}
	f_{\ref{bij::pst-qtc}}(\pi)_{i,j} = \begin{cases}
		\frac{\pi_{i,j}+M}{2} & (i+j<n+1), \\
		\pi_{i,j}+\frac{M-m_{i,j}}{2} & (i+j=n+1),
	\end{cases}
\end{equation}
where 
\[
m_{i,j} = \begin{cases}
	\min(\pi_{i-1,j},\pi_{i,j-1}) & (i,j \ne 1), \\
	\pi_{i-1,j} & (j=1), \\
	\pi_{i,j-1} & (i=1).
\end{cases}
\]
Then, $f_{\ref{bij::pst-qtc}}$ is bijective and the inverse function is defined by
\begin{equation} \label{construction::qtc-pst}
	f_{\ref{bij::pst-qtc}}^{-1}(\pi')_{(i,j)} = \begin{cases}
		2 \pi'_{i,j} - M & (i+j<n+1), \\
		\pi'_{i,j}-M+m'_{i,j} & (i+j=n+1),
	\end{cases}
\end{equation}
where 
\[
m'_{i,j} = \begin{cases}
	\min(\pi'_{i-1,j},\pi'_{i,j-1}) & (i,j \ne 1), \\
	\pi'_{i-1,j} & (j=1), \\
	\pi'_{i,j-1} & (i=1).
\end{cases}
\]
\end{construction}

\begin{proof}
First we verify that $f_{\ref{bij::pst-qtc}}$ is well-defined.
Since $\pi_{i,j} \equiv M \pmod{2}$ for all $(i,j) \in \mathbb{Z}_{>0}^2$ such that $i+j<n+1$ and $0 \leq \pi_{i,j} \leq M$, it follows that $f_{\ref{bij::pst-qtc}}(\pi)_{i,j} \in \{0,1,\ldots,M\}$.
% and thus $f_{\ref{bij::pst-qtc}}(\pi) \in \stairPP{n}{M}$.
For all $(i,j) \in \mathbb{Z}_{>0}^2$ such that $i+j=n+1$ and $i \geq 2$, we have
\begin{align*}
	{f_{\ref{bij::pst-qtc}}(\pi)}_{i-1,j} - {f_{\ref{bij::pst-qtc}}(\pi)}_{i,j} &= \frac{\pi_{i-1,j}+M}{2} - \left( \pi_{i,j} + \frac{M-m_{i,j}}{2} \right) \\
	&= \frac{1}{2} \left( \left( \pi_{i-1,j} - \pi_{i,j} \right) + \left( m_{i,j} - \pi_{i,j} \right) \right) \\
	&\geq 0,
\end{align*}
and
\begin{align*}
	{f_{\ref{bij::pst-qtc}}(\pi)}_{i,j} - \left( M - {f_{\ref{bij::pst-qtc}}(\pi)}_{i-1,j} \right) &= \left( \pi_{i,j} + \frac{M-m_{i,j}}{2} \right) - \left( M - \frac{\pi_{i-1,j}+M}{2} \right) \\
	&= \pi_{i,j} + \frac{1}{2} \left( \pi_{i-1,j} - m_{i,j} \right) \\
	&\geq 0.
\end{align*}
Thus we have ${f_{\ref{bij::pst-qtc}}(\pi)}_{i-1,j} \leq {f_{\ref{bij::pst-qtc}}(\pi)}_{i,j} \leq M - {f_{\ref{bij::pst-qtc}}(\pi)}_{i-1,j}$.
Similarly, we can verify that ${f_{\ref{bij::pst-qtc}}(\pi)}_{i,j-1} \leq {f_{\ref{bij::pst-qtc}}(\pi)}_{i,j} \leq M - {f_{\ref{bij::pst-qtc}}(\pi)}_{i,j-1}$, for all $(i,j) \in \mathbb{Z}_{>0}^2$ such that $i+j=n+1$ and $j \geq 2$.
The other required inequalities are easy to verify, so we can conclude that ${f_{\ref{bij::pst-qtc}}(\pi)} \in \QTCPP{n}{M}$.

Next, we verify that $f^{-1}_{\ref{bij::pst-qtc}}$ defined as (\ref{construction::qtc-pst}) is well-defined.
Let $\pi' \in \QTCPP{n}{M}$ and $\pi = f^{-1}_{\ref{bij::pst-qtc}}(\pi')$. Then, for all $(i,j) \in \mathbb{Z}_{>0}^2$ such that $i+j<n+1$, we have $\pi_{i,j} = 2 \pi'_{i,j} - M$.
Since $\pi'_{i,j} \geq \frac{M}{2}$ from (\ref{eq::QTCPP-M/2}), it follows that $0 \leq \pi_{i,j} \leq M$ and $\pi_{i,j} \equiv M \pmod{2}$.
For all $(i,j) \in \mathbb{Z}_{>0}^2$ such that $i+j=n+1$ and $i \geq 2$, we have
\begin{align*}
	\pi_{i-1,j} - \pi_{i,j} &= (2\pi'_{i-1,j} - M) - (\pi'_{i,j} - M + m'_{i,j}) \\
	&= (\pi'_{i-1,j}-m'_{i,j}) + (\pi'_{i-1,j} - \pi'_{i,j}) \\
	&\geq 0.
\end{align*}
The other required inequalities can be verified similarly, so we can conclude that $\pi = f^{-1}_{\ref{bij::pst-qtc}}(\pi') \in \pstairPP{n}{M}$.

Last, we verify that $f^{-1}_{\ref{bij::pst-qtc}}(f_{\ref{bij::pst-qtc}}(\pi)) = \pi$ and $f_{\ref{bij::pst-qtc}}(f^{-1}_{\ref{bij::pst-qtc}}(\pi')) = \pi'$, for $\pi \in \pstairPP{n}{M}$ and $\pi' \in \QTCPP{n}{M}$.
Let $f:=f_{\ref{bij::pst-qtc}}$ and $g:=f^{-1}_{\ref{bij::pst-qtc}}$.
For all $(i,j) \in \mathbb{Z}_{>0}^2$ such that $i+j<n+1$, we have
\[
	(g(f(\pi)))_{i,j} = 2( f(\pi))_{i,j} - M = \pi_{i,j},
\]
and
\[
	(f(g(\pi'))_{i,j} = \frac{g(\pi')_{i,j} + M}{2} = \pi'_{i,j}.
\]
For all $(i,j) \in \mathbb{Z}_{>0}^2$ such that $i+j=n+1$ and $i,j \geq 2$, we have
\begin{align*}
	(g(f(\pi)))_{i,j} &= (f(\pi))_{i,j} - M + \min(f(\pi)_{i-1,j}, f(\pi)_{i,j-1}) \\
	&= \pi_{i,j} + \frac{M-m_{i,j}}{2} - M + \min\left( \frac{\pi_{i-1,j} + M}{2}, \frac{\pi_{i,j-1}+M}{2} \right) \\
	&= \pi_{i,j} + \frac{M-m_{i,j}}{2} - M + \frac{m_{i,j}+M}{2} = \pi_{i,j},
\end{align*}
and
\begin{align*}
	(f(g(\pi')))_{i,j} &= g(\pi')_{i,j} + \frac{M-\min(g(\pi')_{i-1,j}, g(\pi')_{i,j-1})}{2} \\
	&= \pi'_{i,j} - M + m'_{i,j} + \frac{M-\min(2 \pi'_{i-1,j} - M, 2 \pi'_{i,j-1} - M)}{2} \\
	&= \pi'_{i,j} - M + m'_{i,j} + \frac{2M-2m'_{i,j}}{2} = \pi'_{i,j}.
\end{align*}
The cases $(i,j)=(1,n)$ and $(n,1)$ can be verified in a similar manner. Therefore, we conclude that $f_{\ref{bij::pst-qtc}}$ is a bijection between $\pstairPP{n}{M}$ and $\QTCPP{n}{M}$.
\end{proof}
This bijection can also be understood in the following way.
First, we can verify, just as in the above proof, that it defines a bijection satisfying the required inequalities in the range $i+j<n+1$.
Once the values in this range are fixed, the values on the diagonal $i+j=n+1$ are constrained by
\[
	0 \leq \pi_{i,j} \leq m_{i,j},\qquad\text{and}\qquad M-m'_{i,j} \leq \pi'_{i,j} \leq m'_{i,j}.
\]
By definition, we have $m_{i,j} = 2m'_{i,j} - M$, so the number of possible values for $\pi_{i,j}$ and $\pi'_{i,j}$ are the same.
Hence, by shifting the values of $\pi'_{i,j}$ by $M-m'_{i,j}$, we obtain a bijection.

\begin{example}
The tableau $\,\vcenter{\hbox{\young(8663,440,42,1)}} \in \pstairPP{4}{8}$ corresponds to $\,\vcenter{\hbox{\young(8774,662,64,3)}} \in \QTCPP{4}{8}.$
\end{example}

\subsection{$1 : 2^{\# S}$ correspondences between QTCPPs and stair-PPs}
In this subsection, we explain what we shall call $1 : 2^{\# S}$ correspondences between QTCPPs and stair-PPs, including the definition of $S$.
First, we treat the case where $M$ is odd.

\begin{construction} \label{1-2s::qtc-sta}
Let $n \in \mathbb{Z}_{>0}$, $m \in \mathbb{Z}_{\geq 0}$ and $M=2m+1$.
We define the map
\[
\begin{tikzcd}
f_{\ref{1-2s::qtc-sta}} \arrow[r, phantom, "\colon"]&
\QTCPP{n}{M} \arrow[r] \arrow[d, phantom, "\rotatebox{90}{$\in$}"] 
  & \stairPP{n}{m} \times \{0,1\}^n \arrow[d, phantom, "\rotatebox{90}{$\in$}"]  \\ &
\pi \arrow[r, mapsto] & (\pi', t)
\end{tikzcd}
\]
by
\begin{align*}
\pi'_{i,j} &= \begin{cases}
	\pi_{i,j} - m - 1 & (i+j<n+1), \\
	\max(\pi_{i,j} - m - 1, m - \pi_{i,j} ) & (i+j=n+1),
\end{cases} \\
t_i &= \begin{cases}
	0 & (\pi_{i,n+1-i} \geq m+1), \\
	1 & (\pi_{i,n+1-i} < m+1).
\end{cases}	
\end{align*}
The map $f_{\ref{1-2s::qtc-sta}}$ is bijective, and its inverse map is given by
\begin{equation} \label{2s-1::sta-qtc}
	\pi_{i,j} = \begin{cases}
		m-\pi'_{i,j} & (i+j=n+1 \text{ and } t_i = 1), \\
		m+1+\pi'_{i,j} & (\text{otherwise}).
	\end{cases}
\end{equation}
\end{construction}

\begin{proof}
In the range $i+j<n+1$, since $\pi_{i,j} \geq \frac{M}{2}$ and $\pi_{i,j} \in \mathbb{Z}$, we have $\pi'_{i,j} = \pi_{i,j}-m-1 \geq 0$.
For all $(i,j) \in \mathbb{Z}_{>0}^2$ such that $i+j=n+1$ and $i \geq 2$, we have 
\begin{align*}
	\pi'_{i-1,j}-\pi'_{i,j} &= (\pi_{i-1,j}-m-1) - \max(\pi_{i,j}-m-1,m-\pi_{i,j}) \\
	&= \min(\pi_{i-1,j} - \pi_{i,j}, \pi_{i-1,j} + \pi_{i,j}-2m-1) \\
	&= \min(\pi_{i-1,j}-\pi_{i,j}, \pi_{i,j} - (M-\pi_{i-1,j})) \\
	&\geq 0.
\end{align*}
The other required inequalities can be verified similarly, so the map $f_{\ref{1-2s::qtc-sta}}$ is well-defined.
It can be easily verified that when $(\pi',t) \in \stairPP{n}{m} \times \{0,1\}^n$, $\pi$ defined by (\ref{2s-1::sta-qtc}) is in $\QTCPP{n}{M}$.
It can also be verified, element-wise, that the inversion of $f_{\ref{1-2s::qtc-sta}}$ is given by (\ref{2s-1::sta-qtc}).
\end{proof}

\begin{example}
The tableau $\,\vcenter{\hbox{\young(7643,553,52,4)}} \in \QTCPP{4}{7}$ corresponds to $$\,\left( \,\vcenter{\hbox{\young(3200,110,11,0)}}\,, (1,1,1,0) \right) \in \stairPP{4}{3} \times \{0,1\}^4.$$
Furthermore, when $M=7$, for example, the values of the entries in $\QTCPP{n}{M}$ and their corresponding values in $\stairPP{n}{m} \times \{0,1\}^n$ are organized as follows
(where $0$ and $1$ are replaced by $+$ and $-$, respectively):
\[
\begin{tikzpicture}[every node/.style={font=\small}, x=1cm, y=0.7cm]

% bracket for i+j < n+1
%\draw[decorate,decoration={brace,mirror,amplitude=4pt}] (0.5,3.8) -- (0.5,0.2) node[midway,left=3pt] {\( i + j < n + 1 \)};
% 上端
\draw[thick] (0.3,2.4) -- (0.5,2.4);
% 縦線
\draw[thick] (0.3,2.4) -- (0.3,0.0);
% 下端
\draw[thick] (0.3,0.0) -- (0.5,0.0);

% ラベル（必要なら少し下寄りに）
\node[left] at (-0, 1.2) {\( i + j < n + 1 \)};

% bracket for i+j = n+1
%\draw[decorate,decoration={brace,mirror,amplitude=4pt}] (0.7,3.8) -- (0.7,-3.8) node[pos=0.6,left=3pt] {\( i + j = n + 1 \)};
% 上端
\draw[thick] (0.7,2.4) -- (0.9,2.4);
% 縦線
\draw[thick] (0.7,2.4) -- (0.7,-2.7);
% 下端
\draw[thick] (0.7,-2.7) -- (0.9,-2.7);

% ラベル（必要なら少し下寄りに）
\node[left] at (0.5, -1.5) {\( i + j = n + 1 \)};

% QTCPP label
%\node at (1.6,3) {values in $\QTCPP{n}{M}$};

% right-hand side label
%\node at (6,3) {$\{0,1\}^n \simeq \{+,-\}^n$};
%\node at (10, 3) {values in $\stairPP{n}{M}$};

% arrows and entries
\foreach \val/\sign/\pi/\y in {
7/{$+$}/3/2.1,
6/{$+$}/2/1.5,
5/{$+$}/1/0.9,
4/{$+$}/0/0.3,
3/{$-$}/0/-0.5,
2/{$-$}/1/-1.1,
1/{$-$}/2/-1.7,
0/{$-$}/3/-2.3} {
  % Left values
  \node at (1.2,\y) {$\val$};
  % Arrow
  \draw[|->, thick] (2,\y) -- (3.5,\y);
  % t_i
  \node[text=blue] at (4,\y) {\sign};
  % pi'_{i,j}
  \node[text=red] at (4.5,\y) {\(\pi\)};
}

% t_i and pi'_{i,j} labels
\node at(1.2, -3.2) {\(\pi_{i,j}\)};
\node[text=blue] at (4,-3.2) {\( t_i \)};
\node[text=red] at (4.7,-3.2) {\( \pi'_{i,j}. \)};

\end{tikzpicture}
\]
\end{example}

Next, we consider the case where $M$ is even.
To begin, we transform the co-domain $\stairPP{n}{m} \times \{0,1\}^n$ (from the case where $M$ is odd) into the form:
\[
	\bigsqcup_{\pi' \in \stairPP{n}{m}} \{0,1\}^n.
\]
In the case $M$ is even, let $S(\pi') = \{ 1 \leq i \leq n \mid \pi'_{i,n+1-i} \ne 0 \}$. Then, we have a bijection
\[
	\QTCPP{n}{M} \to \bigsqcup_{\pi' \in \stairPP{n}{\frac{M}{2}}} \{0,1\}^{S(\pi')}.
\]

\begin{construction} \label{1-2s::qtc-sta-even}
Let $n \in \mathbb{Z}_{>0}$, $m \in \mathbb{Z}_{\geq 0}$ and $M=2m$.
We define the map
\[
\begin{tikzcd}
f_{\ref{1-2s::qtc-sta-even}} \arrow[r, phantom, "\colon"]&
\QTCPP{n}{M} \arrow[r] \arrow[d, phantom, "\rotatebox{90}{$\in$}"] 
  & {\displaystyle \bigsqcup_{\pi' \in \stairPP{n}{m}} \{0,1\}^{S(\pi')}} \arrow[d, phantom, "\rotatebox{90}{$\in$}"]  \\ &
\pi \arrow[r, mapsto] & (\pi', t \colon S(\pi') \to \{0,1\})
\end{tikzcd}
\]
by
\begin{align*}
\pi'_{i,j} &= \begin{cases}
	\pi_{i,j} - m & (i+j<n+1), \\
	\max(\pi_{i,j} - m, m - \pi_{i,j} ) & (i+j=n+1),
\end{cases} \\
t_i &= \begin{cases}
	0 & (i \in S(\pi') \text{ and } \pi_{i,n+1-i} > m), \\
	1 & (i \in S(\pi') \text{ and } \pi_{i,n+1-i} < m).
\end{cases}	
\end{align*}
Then, the map $f_{\ref{1-2s::qtc-sta-even}}$ is bijective, and the inversion is given by
\begin{equation}
	\pi_{i,j} = \begin{cases}
		m+\pi'_{i,j} & (i+j<n+1) \\
		m+(-1)^{t_i}\pi'_{i,j} & (i+j=n+1 \text{ and } i \in S(\pi')) \\
		m & (i+j=n+1 \text{ and } i \not\in S(\pi')).
	\end{cases}
\end{equation}
\end{construction}
We omit the proof, as it is obtained in the same way as in the case where $M$ is odd.

\begin{example}
The tableau $\,\vcenter{\hbox{\young(8753,664,62,5)}} \in \QTCPP{4}{8}$ corresponds to $$(\pi',t) = \left( \,\vcenter{\hbox{\young(4311,220,22,1)}}, (1,\cdot,1,0) \right) \in \stairPP{4}{4} \times \{0,1\}^{\{1,3,4\}}.$$
Here, since $\pi'_{2,3} = 0$ and the other entries on the anti-diagonal line of $\pi'$ are not $0$, we have $S(\pi') = \{1,3,4\}$, and $t=(1,\cdot,1,0)$ denotes the function $t \colon S(\pi') = \{1,3,4\} \to \{0,1\}$ such that
$t(1)=1$, $t(3)=1$ and $t(4)=0$.
In the case where $M=8$, the values of the entries in $\QTCPP{n}{M}$ and their corresponding values on the other side are organized as follows (where $0$ and $1$ are replaced by $+$ and $-$, respectively):
\[
\begin{tikzpicture}[every node/.style={font=\small}, x=1cm, y=0.7cm]

% bracket for i+j < n+1
%\draw[decorate,decoration={brace,mirror,amplitude=4pt}] (0.5,3.8) -- (0.5,0.2) node[midway,left=3pt] {\( i + j < n + 1 \)};
% 上端
\draw[thick] (0.3,3) -- (0.5,3);
% 縦線
\draw[thick] (0.3,3) -- (0.3,0.0);
% 下端
\draw[thick] (0.3,0.0) -- (0.5,0.0);

% ラベル（必要なら少し下寄りに）
\node[left] at (-0, 1.5) {\( i + j < n + 1 \)};

% bracket for i+j = n+1
%\draw[decorate,decoration={brace,mirror,amplitude=4pt}] (0.7,3.8) -- (0.7,-3.8) node[pos=0.6,left=3pt] {\( i + j = n + 1 \)};
% 上端
\draw[thick] (0.7,3) -- (0.9,3);
% 縦線
\draw[thick] (0.7,3) -- (0.7,-2.7);
% 下端
\draw[thick] (0.7,-2.7) -- (0.9,-2.7);

% ラベル（必要なら少し下寄りに）
\node[left] at (0.5, -1.5) {\( i + j = n + 1 \)};

% QTCPP label
%\node at (1.6,3) {values in $\QTCPP{n}{M}$};

% right-hand side label
%\node at (6,3) {$\{0,1\}^n \simeq \{+,-\}^n$};
%\node at (10, 3) {values in $\stairPP{n}{M}$};

% arrows and entries
\foreach \val/\sign/\pi/\y in {
8/{$+$}/4/2.7,
7/{$+$}/3/2.1,
6/{$+$}/2/1.5,
5/{$+$}/1/0.9,
4/{$\cdot$}/0/0.3,
3/{$-$}/0/-0.5,
2/{$-$}/1/-1.1,
1/{$-$}/2/-1.7,
0/{$-$}/3/-2.3} {
  % Left values
  \node at (1.2,\y) {$\val$};
  % Arrow
  \draw[|->, thick] (2,\y) -- (3.5,\y);
  % t_i
  \node[text=blue] at (4,\y) {\sign};
  % pi'_{i,j}
  \node[text=red] at (4.5,\y) {\(\pi\)};
}

% t_i and pi'_{i,j} labels
\node at(1.2, -3.2) {\(\pi_{i,j}\)};
\node[text=blue] at (4,-3.2) {\( t_i \)};
\node[text=red] at (4.7,-3.2) {\( \pi'_{i,j}. \)};

\end{tikzpicture}
\]
\end{example}

\subsection{$1 : 2^{\# S}$ correspondences between SPPs and even SPPs}
Next, we explain that the same relation exists between SPPs and even SPPs as the relation between QTCPPs and stair-PPs described in the previous subsection.
Namely, we construct the following bijections:
\begin{itemize}
\item $\SPP{n}{2m+1} \to \eSPP{n}{m} \times \{0,1\}^n$,
\item For any $\pi' \in \eSPP{n}{m}$, we define
\[
	S(\pi') := \{1 \leq j \leq n \mid \pi'_{1,j} \ne 2m \},
\]
and we construct a bijection
\[
	\SPP{n}{2m} \to \bigsqcup_{\pi' \in \eSPP{n}{m}} \{0,1\}^{S(\pi')}.
\]
\end{itemize}

First, we describe a bijection known as conjugation. Let $\SSYT{\lambda}{n}$ be the set of semi-standard Young tableaux of shape $\lambda$ with entries between $1$ and $n$.
We adopt the convention that entries are decreasing, rather than increasing.
We denote the shape of a semi-standard Young tableau $T$ by $\sh{T}$.

\begin{definition}
Let $n \in \mathbb{Z}_{>0}$. For partitions $\lambda = (\lambda_1 \geq \lambda_2 \geq \cdots \geq \lambda_n)$ and $\mu = (\mu_1 \geq \mu_2 \geq \cdots \geq \mu_n)$,
we write $\lambda \leq \mu$ to mean that $\lambda_i \leq \mu_i$ for all $i \in \{1,2,\ldots,n\}$.
\end{definition}

\begin{construction}
Let $n \in \mathbb{Z}_{>0}$, $m \in \mathbb{Z}_{\geq 0}$.
The bijection $\displaystyle \phi_\text{conj} \colon \SPP{n}{m} \to \bigsqcup_{\lambda \leq (m)^n} \SSYT{\lambda}{n}$ is defined as follows:
\begin{gather*}
\sh{\phi_\text{conj}(\pi)} = (\pi_{1,1}, \pi_{2,2}, \ldots, \pi_{n,n}), \\
\phi_\text{conj}(\pi)_{i,j} = \# \{ i \leq k \leq n \mid \pi_{i,k} \geq j \}.
\end{gather*}
Furthermore, the inverse map is described as follows:
\begin{quote}
To any $(T,\lambda) \in \bigsqcup_{\lambda \leq (m)^n} \SSYT{\lambda}{n}$, there corresponds a shifted staircase plane partition $\phi_\text{conj}^{-1}\bigl( (T,\lambda) \bigr)$ of size $n$ such that
\[
	\phi_\text{conj}^{-1}\bigl( (T,\lambda) \bigr)_{i,j} = \# \{ 1 \leq k \leq \lambda_i \mid T_{i,k} \geq j-i+1 \}.
\]
\end{quote}
\end{construction}

\begin{proof}
First we verify that $\phi_\text{conj}(\pi) \in \bigsqcup_{\lambda \leq (m)^n} \SSYT{\lambda}{n}$.
Since $\pi_{1,1} \leq m$, it holds that $(\pi_{1,1}, \pi_{2,2}, \ldots, \pi_{n,n}) \leq (m)^n$.
Let $S_{i,j} =  \{ i \leq k \leq n \mid \pi_{i,k} \geq j \}$.
For all applicable $(i,j)$s, we have $1 \leq \#S_{i,j} \leq n$, since $j \leq \pi_{i,i}$.
For all applicable $(i,j)$s, $\phi_\text{conj}(\pi)_{i,j} \geq \phi_\text{conj}(\pi)_{i,j+1}$ holds, because $S_{i,j} \subseteq S_{i,j+1}$ by definition.

We now check that $\phi_\text{conj}(\pi)_{i-1,j} > \phi_\text{conj}(\pi)_{i,j}$.
Let $k \in S_{i,j}$. Then, we have $\pi_{i-1,k} \geq \pi_{i,k} \geq j$, so $k \in S_{i-1,j}$.
Furthermore, we have $i-1 \in S_{i-1,j}$. Thus, we have $S_{i,j} \cup \{ i-1 \} \subseteq S_{i-1,j}$, which means that $\phi_\text{conj}(\pi)_{i-1,j} > \phi_\text{conj}(\pi)_{i,j}$.

Next, we verify that $\phi_\text{conj}^{-1}\bigl( (T,\lambda) \bigr) \in \SPP{n}{m}$.
Let $S'_{i,j} = \{ 1 \leq k \leq \lambda_i \mid T_{i,k} \geq j-i+1 \}$.
Since $S'_{i,j} \supseteq S'_{i,j+1}$ by definition, we have $\phi_\text{conj}^{-1}\bigl( (T,\lambda) \bigr)_{i,j} \geq \phi_\text{conj}^{-1}\bigl( (T,\lambda) \bigr)_{i,j+1}$.
We check that $\phi_\text{conj}^{-1}\bigl( (T,\lambda) \bigr)_{i-1,j} \geq \phi_\text{conj}^{-1}\bigl( (T,\lambda) \bigr)_{i,j}$.
Let $k \in S'_{i,j}$. Then, we have $T_{i,k} \geq j-i+1$. Since $T_{i-1,k} \geq T_{i,k}+1 = j-i+2 = j-(i-1)+1$, it holds that $k \in S'_{i-1,j}$.
Therefore, we have $S'_{i,j} \subseteq S'_{i-1,j}$, which means that $\phi_\text{conj}^{-1}\bigl( (T,\lambda) \bigr)_{i-1,j} \geq \phi_\text{conj}^{-1}\bigl( (T,\lambda) \bigr)_{i,j}$.

Last, since both $\phi_\text{conj}$ and $\phi_\text{conj}^{-1}$ act by taking the conjugation of a partition in each row, we have
$\phi_\text{conj} \circ \phi_\text{conj}^{-1} = \id$ and $\phi_\text{conj}^{-1} \circ \phi_\text{conj} = \id$.
\end{proof}

\begin{remark}
Since $\phi_\text{conj}(\pi)$ is actually independent of $n$ and $m$, we do not use notations such as  $\phi_\text{conj}^{(n,m)}$.
In addition, we call $\phi_\text{conj}(\pi)$ the conjugate of $\pi$.
\end{remark}

\begin{example} \label{ex::3-2-1}
The conjugate of $\,\vcenter{\hbox{\young(4433,:333,::31,:::0)}}$ is $\,\vcenter{\hbox{\young(4442,333,211)}}$.
\end{example}

The restriction of $\phi_\text{conj} \colon \SPP{n}{2m} \to \bigsqcup_{\lambda \leq (2m)^n} \SSYT{\lambda}{n}$ to $\eSPP{n}{m}$ maps into 
\[
	\bigsqcup_{\lambda \leq (2m)^n \text{ s.t. } \lambda_i \equiv 0 \pmod{2} \text{ for all $1 \leq i \leq n$ } } \SSYT{\lambda}{n}.
\]
Therefore, in the case $M$ is odd, it is sufficient to construct a bijection
\begin{equation} \label{bij::SSYT-odd}
\bigsqcup_{\lambda \leq (2m+1)^n} \SSYT{\lambda}{n} \to
\bigsqcup_{\lambda \leq (2m)^n \colon \text{even} } \SSYT{\lambda}{n} \times \{0,1\}^n,
\end{equation}
where we say a partition $\lambda$ is even if all elements of $\lambda$ are even.

We will give a construction of this bijection using the algorithm known as \emph{row insertion} or \emph{row bumping}.  
First, we review some properties of bumping following Fulton's book \cite{Fulton}.  
Let \( T \) be a semi-standard Young tableau and \( x \) a positive integer.  
We write $ T \leftarrow x $ to denote the tableau obtained by inserting \( x \) into \( T \) using the row insertion algorithm, which is defined as follows:
\begin{quote}
Starting from the first row, insert \( x \) into the row by replacing (or ``bumping'') the leftmost entry \( y \) that is strictly less than \( x \). Then insert \( y \) into the next row in the same manner. If no such entry exists in a row, place \( x \) at the end of that row and terminate the process.
\end{quote}
The result is a new tableau \( T \leftarrow x \), which remains semi-standard and has one more box than \( T \).  
We refer to the final position where the new box is added as the \emph{insertion position}.

We can also define the inverse of the row insertion algorithm.  
Given a tableau \( T' (= T \leftarrow x) \) and $\sh{T}$, we can recover \( T \) and the inserted integer \( x \) by reversing the bumping process.
Therefore, the row insertion algorithm defines a bijection 
\[ \SSYT{\lambda}{n} \times \{1,2,\ldots,n\} \to \bigsqcup_{\lambda' \geq \lambda \text{ s.t. }\lvert \lambda' \rvert = \lvert \lambda \rvert+1} \SSYT{\lambda'}{n}. \]

We state the row bumping lemma (see \S 1.1 of  \cite{Fulton} for a proof).
\begin{lemma}
Let $T$ be a semi-standard Young tableau and let $x$ and $x'$ be positive integers with $x<x'$.
Let $B$ be the insertion position of $T \leftarrow x$ and $B'$ the insertion position of $(T \leftarrow x) \leftarrow x'$.
Then, $B'$ is weakly to the left of and strictly below $B$.
\end{lemma}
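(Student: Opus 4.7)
The plan is to prove the lemma by induction on the row index, tracking the bumping columns and the values being inserted into each row simultaneously for the two insertion processes. Label the bumping data for the first insertion as $(c_1,y_1),(c_2,y_2),\ldots,(c_r,y_r)$, where at row $i$ the value $x_i$ (with $x_1=x$ and $x_{i+1}=y_i$) is inserted and the entry $y_i$ is bumped from column $c_i$; the process terminates at row $r$, placing $x_r$ at the end of row $r$ at column $c_r$, so $B=(r,c_r)$. Define the analogous data $(c_i',y_i')$ for the subsequent insertion of $x'$ into $T\leftarrow x$, with $x_1'=x'$ and terminal position $B'=(r',c_{r'}')$.

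The main invariant I would maintain by induction on $i$ is: as long as both processes pass through row $i$, we have $c_i'\leq c_i$ and $x_i'>x_i$. The base case $x_1'=x'>x=x_1$ is given. For the inductive step at row $i$, observe that after the first insertion, row $i$ contains $x_i$ at column $c_i$ (replacing the old entry $y_i<x_i$), while all entries at columns strictly less than $c_i$ remain $\geq x_i$ by the leftmost-choice rule. Inserting $x_i'>x_i$ into this modified row, the entry $x_i$ at column $c_i$ is strictly less than $x_i'$, so the leftmost such position satisfies $c_i'\leq c_i$. The bumped entry $y_i'$ is then either $x_i$ itself (when $c_i'=c_i$) or the original entry at some column $c_i'<c_i$; in either case $y_i'\geq x_i>y_i$, which is exactly $x_{i+1}'>x_{i+1}$, closing the induction.

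For the \emph{strictly below} part, consider row $r$ where the first insertion terminated. The value $x_r$ was appended at column $c_r$ without a bump, so immediately before the first insertion no entry of row $r$ was strictly less than $x_r$. After the first insertion, row $r$ has the new entry $x_r$ at column $c_r$, and since $x_r'>x_r$ by the invariant, there is at least one entry strictly less than $x_r'$ in this modified row; hence the second insertion bumps rather than terminates at row $r$, and $r'\geq r+1$. For the \emph{weakly left} part, I would invoke the intra-insertion monotonicity $c_{i+1}'\leq c_i'$, which follows from the column-strict condition of semistandard tableaux: the entry directly below position $(i,c_i')$ is strictly less than $y_i'=x_{i+1}'$, so it qualifies as a bump candidate in row $i+1$ at column $c_i'$, forcing the leftmost such candidate to lie weakly to its left. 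Iterating from row $r$ down to row $r'$ yields $c_{r'}'\leq c_r'\leq c_r$, as desired.

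The main obstacle is purely bookkeeping: the paper adopts the reversed (decreasing) convention for semistandard Young tableaux and defines row bumping as replacing the leftmost entry strictly less than the inserted value, so every inequality in the classical Fulton-style proof must be reoriented and kept consistent. No deeper conceptual difficulty arises, but the two nested inductions---one on the row index for the joint invariant, and one for the intra-insertion column monotonicity---must be threaded carefully so that the strict/weak inequalities line up at the terminal row $r$ where the asymmetry between the two processes becomes visible.
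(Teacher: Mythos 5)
Your proof is correct: the joint invariant $c_i'\leq c_i$, $x_i'>x_i$, the termination analysis at row $r$, and the intra-insertion column monotonicity all check out under the paper's decreasing convention. The paper does not prove this lemma itself but defers to Fulton \S 1.1, and your argument is essentially that standard proof correctly reoriented, so there is nothing further to compare.
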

The following proposition is a consequence of this lemma (see \S 1.1 of  \cite{Fulton} for a proof).
\begin{proposition} \label{prop::row-insertion}
Let $T$ be a semi-standard Young tableau of shape $\lambda$ and $x_1 < x_2 < \cdots < x_p$ positive integers.
Let $\mu = \sh{(((T \leftarrow x_1) \leftarrow x_2) \leftarrow \cdots) \leftarrow x_p}$.
Then, $\mu \setminus \lambda$ has at most one box in each row.

Conversely, let $U$ be a semi-standard Young tableau of shape $\mu$, $\lambda$ a partition such that $\lambda \leq \mu$ and $\mu \setminus \lambda$ has at most one box in each row,
and $p$ the number of boxes in $\mu \setminus \lambda$.
Then, there exists a unique tableau $T$ of shape $\lambda$ as well as positive integers $x_1 < x_2 < \cdots < x_p$ such that $U = (((T \leftarrow x_1) \leftarrow x_2) \leftarrow \cdots) \leftarrow x_p$.
\end{proposition}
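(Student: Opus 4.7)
The plan is to prove both halves by induction on $p$, with the row bumping lemma (and its converse direction) as the only real input.

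For the forward statement, I would let $B_i$ denote the insertion position at the $i$-th step when $x_1 < x_2 < \cdots < x_p$ are inserted in order. Applying the row bumping lemma to each consecutive pair $(x_i, x_{i+1})$ shows that $B_1, B_2, \ldots, B_p$ is weakly left-moving and strictly descending; in particular they sit in $p$ distinct rows, and since $\mu \setminus \lambda = \{B_1, \ldots, B_p\}$, each row of $\mu \setminus \lambda$ contains at most one box.

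For the converse I would construct the inverse via iterated reverse row insertion performed from the bottom up. Given $U$ of shape $\mu$, let $B_p$ be the bottom-most box of $\mu \setminus \lambda$. A short check shows $B_p$ is a corner of $\mu$: if it sits in row $r$ then $\mu_{r'} = \lambda_{r'}$ for all $r' > r$, so $\mu_{r+1} = \lambda_{r+1} \leq \lambda_r < \mu_r$. Apply reverse row insertion at $B_p$ to extract an integer $x_p$ and a tableau $U_{p-1}$ of shape $\mu$ with $B_p$ removed, then iterate on the remaining $p-1$ new boxes. After $p$ steps one recovers a tableau $T$ of shape $\lambda$ and a sequence $x_1, x_2, \ldots, x_p$, and it remains to verify strictness and uniqueness.

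Strictness of $x_1 < \cdots < x_p$ follows from the converse direction of the row bumping lemma (also in \cite{Fulton}): if successive insertion positions $B_i, B_{i+1}$ are strictly descending, then the associated inserted values satisfy $x_i < x_{i+1}$. By construction our reverse procedure always produces strictly descending $B_i$'s, so this applies directly. Uniqueness follows by re-running the forward argument: any valid decomposition of $U$ with $x'_1 < \cdots < x'_p$ must place its final insertion position at the bottom-most box of $\mu \setminus \lambda$, hence at $B_p$, so the first reverse step is forced, and induction finishes the job. The only conceptually nontrivial point — and hence the main obstacle — is to have both directions of the row bumping lemma available; once that is granted, the remaining argument is a bookkeeping step pairing the boxes of $\mu \setminus \lambda$, read from bottom to top, with the strictly increasing insertion sequence read in reverse.
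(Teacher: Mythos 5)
Your proof is correct and is essentially the argument the paper relies on: the paper does not write out a proof but cites Fulton \S 1.1, whose proof is exactly this iteration of the row bumping lemma, and the paper's accompanying remark describes the same bottom-to-top reverse-insertion recovery of $T$ and the $x_i$'s that you give.
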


\begin{remark}
In the context of bijective combinatorics, the unique existence of $T$ and $x$’s is not sufficient.
However, we can recover the values of $T$ and $x$’s from $U$ and $\lambda$, by repeatedly applying the inverse of the row insertion algorithm with boxes in $\mu \setminus \lambda$ from bottom to top.
\end{remark}

Using this proposition, we can construct the desired bijection (\ref{bij::SSYT-odd}) as follows.
\begin{construction} \label{construction::SSYT-odd}
Let $n \in \mathbb{Z}_{>0}$ and $m \in \mathbb{Z}_{\geq 0}$.
We construct a bijection
\[
f_\text{\ref{construction::SSYT-odd}} \colon 
\bigsqcup_{\mu \leq (2m+1)^n} \SSYT{\lambda}{n} \to
\bigsqcup_{\lambda \leq (2m)^n \colon \text{even} } \SSYT{\lambda}{n} \times \{0,1\}^n.
\]
Let $(U,\mu) \in \bigsqcup_{\mu \leq (2m+1)^n} \SSYT{\mu}{n}$.
Let $\lambda$ be an even partition such that $\lambda_i \in \{ \mu_i, \mu_i-1 \}$. Then, $\lambda \leq (2m)^n$.
Furthermore, applying Proposition \ref{prop::row-insertion}, we have a unique tableau $T \in \SSYT{\lambda}{n}$ and positive integers $x_1 < x_2 < \cdots <x_p \leq n$
such that $U = (((T \leftarrow x_1) \leftarrow x_2) \leftarrow \cdots) \leftarrow x_p$.

We define a function $t \colon \{1,2,\ldots,n\} \to \{0,1\}$ by
\[
	t(i) = \begin{cases} 1 & \text{ if $i \in \{ x_1, x_2, \ldots, x_p \}$, } \\ 0 & \text{ otherwise. } \end{cases}
\]
Then, we define $f_\text{\ref{construction::SSYT-odd}}\bigl((U,\mu)\bigr) := ((T,\lambda),t)$.

The inversion $f_\text{\ref{construction::SSYT-odd}}^{-1}$ is given as follows.
Let $((T,\lambda),t) \in \bigsqcup_{\lambda \leq (2m)^n \colon \text{even} } \SSYT{\lambda}{n} \times \{0,1\}^n$.
Let $\{ x_1 < x_2 < \cdots < x_p \} = \{ 1 \leq i \leq n \mid t(i) = 1 \}$ and $U = (((T \leftarrow x_1) \leftarrow x_2) \leftarrow \cdots) \leftarrow x_p$.
Then, $f_\text{\ref{construction::SSYT-odd}}^{-1}\bigl( ((T,\lambda),t) \bigr) = (U, \sh{U})$.
Note that Proposition \ref{prop::row-insertion} implies that $\sh{U} \setminus \lambda$ has at most one box in each row. Therefore, $\sh{U} \leq (2m+1)^n$.
\end{construction}

\begin{proof}
According to the remark of Proposition \ref{prop::row-insertion}, the maps $f_\text{\ref{construction::SSYT-odd}}$ and $f_\text{\ref{construction::SSYT-odd}}^{-1}$ are well-defined and explicitly constructed.
Apparently, $f_\text{\ref{construction::SSYT-odd}}^{-1} \circ f_\text{\ref{construction::SSYT-odd}} = \id$.
By the uniqueness of $T$ and $x$’s, we also have $f_\text{\ref{construction::SSYT-odd}} \circ f_\text{\ref{construction::SSYT-odd}}^{-1} = \id$.
\end{proof}

\begin{example} \label{ex::3-2-2}
Let $U = \,\vcenter{\hbox{\young(4442,333,211)}}$ and $\mu = \sh{U} = (4,3,3)$.
Then, $\lambda = (4,2,2)$ and $\mu \setminus \lambda = \{ (2,3), (3,3) \}$.
First, we apply the inverse of the row insertion with the box $(3,3)$.
Then, we have $$U = \,\vcenter{\hbox{\young(4432,331,21)}} \leftarrow 4.$$
Next, we apply the inverse of the row insertion with the box $(2,3)$, and we have $$U = \left( \,\vcenter{\hbox{\young(4431,33,21)}} \leftarrow 4 \right) \leftarrow 2.$$
Therefore, $T = \,\vcenter{\hbox{\young(4431,33,21)}}$, $t(2)=t(4)=1$ and $t(1)=t(3)=0$.
\end{example}

We now proceed to the case where $M$ is even.
This is achieved by restricting $f_\text{\ref{construction::SSYT-odd}}$ to $\bigsqcup_{\mu \leq (2m)^n} \SSYT{\lambda}{n}$.
First, we extend the definition of $S(\pi')$ for $\pi' \in \eSPP{n}{m}$ to $S(T)$ for $T \in \SSYT{\lambda}{n}$, where $\lambda$ is even, through the bijection $\phi_\text{conj}$. %\footnote{Apparently, $S$ can be extended to $T \in \SSYT{\lambda}{n}$, where $\lambda$ is not necessarily even, and $S(\pi')$ to $\pi' \in \SPP{n}{M}$. However, they are useless at least in this paper.}
Namely, we define $S(T) := S(\phi_\text{conj}^{-1}(T))$, which is identical with $\{T_{1,2m}+1, T_{1,2m}+2, \ldots, n\}$, where we consider $T_{1,2m}=0$ if $\lambda_1 < 2m$ and $T_{1,0} = n$.
Then, the following proposition provides the desired bijection.

\begin{proposition} In the same setting as Construction \ref{construction::SSYT-odd}, it holds that
\[
	f_\text{\ref{construction::SSYT-odd}}\left(\bigsqcup_{\mu \leq (2m)^n} \SSYT{\lambda}{n}\right)
	= \left\{ ((T,\lambda),t) \in \bigsqcup_{\lambda \leq (2m)^n \colon \text{even} } \SSYT{\lambda}{n} \times \{0,1\}^n \mid t^{-1}(1) \subseteq S(T) \right\},
\]
where the RHS is in bijection to $\displaystyle \bigsqcup_{(T,\lambda) \in \bigsqcup_{\lambda \leq (2m)^n \colon \text{even} } \SSYT{\lambda}{n}} \{0,1\}^{S(T)}$.
\end{proposition}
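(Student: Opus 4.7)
The plan is to reduce $\mu\leq(2m)^n$ to a single condition on $\mu_1$, and then translate it into a condition on the bumping values $x_1<\cdots<x_p$ via the row bumping lemma. Since $\mu$ is a partition, $\mu\leq(2m)^n$ is equivalent to $\mu_1\leq 2m$. From Construction~\ref{construction::SSYT-odd}, each $\mu_i\in\{\lambda_i,\lambda_i+1\}$ and $\lambda$ is even, so in particular $\mu_1\leq\lambda_1+1$.

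I split into two cases according to $\lambda_1$. If $\lambda_1<2m$, then evenness of $\lambda_1$ forces $\lambda_1\leq 2m-2$, hence $\mu_1\leq 2m-1<2m$; moreover the convention $T_{1,2m}=0$ gives $S(T)=\{1,\dots,n\}$, so $t^{-1}(1)\subseteq S(T)$ is automatic. Both sides of the claimed equality hold unconditionally in this case. If $\lambda_1=2m$, then $\mu_1\in\{2m,2m+1\}$, with $\mu_1=2m+1$ precisely when some insertion of $x_i$ adds a new box to row $1$.

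The heart of the argument is a single application of the row bumping lemma. Since $x_1<\cdots<x_p$, the corresponding insertion positions $B_1,\dots,B_p$ strictly descend in row index, so at most one $B_i$ lies in row $1$, and if so it must be $B_1$. Hence $\mu_1>2m$ is equivalent to $B_1$ lying in row $1$, which happens exactly when inserting $x_1$ bumps no entry of row $1$, i.e.\ when $x_1\leq T_{1,2m}$ (the minimum entry of row $1$). By minimality of $x_1$ among the $x_i$'s, this is in turn equivalent to $\{x_1,\dots,x_p\}\not\subseteq S(T)$, i.e.\ $t^{-1}(1)\not\subseteq S(T)$. Taking contrapositives yields $\mu_1\leq 2m$ iff $t^{-1}(1)\subseteq S(T)$, which is exactly the asserted characterization of the image.

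For the final identification of the right-hand side with $\bigsqcup_{(T,\lambda)}\{0,1\}^{S(T)}$, any $t\colon\{1,\dots,n\}\to\{0,1\}$ satisfying $t^{-1}(1)\subseteq S(T)$ is uniquely determined by its restriction $t|_{S(T)}$, since it must vanish outside $S(T)$. The main obstacle is really nothing more than correctly translating the row bumping lemma under the decreasing-entries convention in use here; once that is done, the rest of the verification is immediate.
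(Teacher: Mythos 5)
Your proof is correct and takes essentially the same route as the paper's: both reduce the question to whether row $1$ acquires a new box, and both use the row bumping lemma (via Proposition~\ref{prop::row-insertion}) to see that this happens exactly when $x_1 \le T_{1,2m}$, i.e.\ exactly when $t^{-1}(1) \not\subseteq S(T)$. The only difference is presentational --- a clean case split on $\lambda_1$ and a single biconditional in place of the paper's two separate inclusions.
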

\begin{proof}
Let $(U,\mu) \in \bigsqcup_{\mu \leq (2m+1)^n} \SSYT{\lambda}{n}$ and $f_\text{\ref{construction::SSYT-odd}}\left((U,\mu)\right) = ((T,\lambda),t)$.
Furthermore, let $U = (((T \leftarrow x_1) \leftarrow x_2) \leftarrow \cdots) \leftarrow x_p$, where $x_1<x_2<\cdots<x_p$. Then, $t^{-1}(1) \subseteq S(T)$ is equivalent to $T_{1,2m}+1 \leq x_1$ or $t \equiv 0$ (namely, $U=T$ and $x_1$ does not exist).

($\subseteq$)
If $\mu$ is even, $U=T$ and $t \equiv 0$. Otherwise, $\mu \setminus \lambda$ is not empty and $x_1$ is well-defined.
Therefore, it is sufficient to show that if $\mu_1 \leq 2m$ and $\mu$ is not even, then $T_{1,2m}+1 \leq x_1$.
Then, the first row of $T$ is the result of the inverse of the row insertion algorithm, where $x_1$ is bumped out.\footnote{Note that when $\mu_1 \leq 2m+1$, this is not the case. Then, according to the remark of Proposition \ref{prop::row-insertion}, the first row of $T$ is the result of just removing the last element, since the box $(1,2m+1)$ is included in $\mu \setminus \lambda$.}
Let $(T \leftarrow x_1)_{1,b}$ be the position of the bumped element $x_1$ before the action. Then, we have $T_{1,2m} \leq T_{1,b} < x_1$.

($\supseteq$)
We assume that $t^{-1}(1) \subseteq S(T)$.
If $t \equiv 0$, then we have $\mu = \lambda \leq (2m)^n$.
Otherwise, we have $T_{1,2m}+1 \leq x_1$. Since the first row of $T$ has an element less than $x_1$, $\sh{T \leftarrow x_1}_1 = \sh{T}_1$.
Similarly, since the first row of $(((T \leftarrow x_1) \leftarrow x_2) \leftarrow \cdots ) \leftarrow x_k$ has an element at most $x_k$, which is less than $x_{k+1}$,
$\sh{((((T \leftarrow x_1) \leftarrow x_2) \leftarrow \cdots ) \leftarrow x_k) \leftarrow x_{k+1}}_1 = \sh{(((T \leftarrow x_1) \leftarrow x_2) \leftarrow \cdots ) \leftarrow x_k}_1$.
By induction, we have $\sh{U}_1 = \sh{T}_1$, which means $\mu_1 = \lambda_1 \leq 2m$.
\end{proof}

\begin{construction} \label{construction::spp-espp}
Let $n \in \mathbb{Z}_{>0}$ and $m \in \mathbb{Z}_{\geq 0}$. We have bijections 
\begin{itemize}
\item $(\phi_\text{conj}^{-1} \times \id_{\{0,1\}^n} )\circ f_\text{\ref{construction::SSYT-odd}} \circ \phi_\text{conj} \colon \SPP{n}{2m+1} \to \eSPP{n}{m} \times \{0,1\}^n = \bigsqcup_{\pi' \in \eSPP{n}{m}} \{0,1\}^{n}$,
\item the restriction of the above bijection to $\SPP{n}{2m} \colon \SPP{n}{2m} \to \bigsqcup_{\pi' \in \eSPP{n}{m}} \{0,1\}^{S(\pi')}$.
\end{itemize}
\[
\begin{tikzcd}[row sep=large, column sep=huge]
  % Row 1
  \SPP{n}{2m}
    \arrow[d, "{\phi_{\mathrm{conj}}}"']
    \arrow[r, hookrightarrow]
  & \SPP{n}{2m+1}
    \arrow[d, "{\phi_{\mathrm{conj}}}"]
  \\[2pt]
  % Row 2
  \displaystyle \bigsqcup_{\mu \le (2m)^n} \SSYT{\mu}{n}
    \arrow[d, "{f_{\ref{construction::SSYT-odd}}}"']
    \arrow[r, hookrightarrow]
  & \displaystyle \bigsqcup_{\mu \le (2m+1)^n} \SSYT{\mu}{n}
    \arrow[d, "{f_{\ref{construction::SSYT-odd}}}"]
  \\[2pt]
  % Row 3
  \displaystyle \bigsqcup_{\substack{\lambda \le (2m)^n \colon \text{even}}}
    \SSYT{\lambda}{n} \times \{0,1\}^{S(T)}
    \arrow[d, "{\phi_{\mathrm{conj}}^{-1} \times \id}"']
    \arrow[r, hookrightarrow]
  & \displaystyle \bigsqcup_{\substack{\lambda \le (2m)^n \colon \text{even}}}
    \SSYT{\lambda}{n} \times \{0,1\}^{n}
    \arrow[d, "{\phi_{\mathrm{conj}}^{-1} \times \id}"]
  \\[2pt]
  % Row 4
  \displaystyle \bigsqcup_{\pi' \in \eSPP{n}{m}} \{0,1\}^{S(\pi')}
    \arrow[r, hookrightarrow]
  & \eSPP{n}{m} \times \{0,1\}^{n}
\end{tikzcd}
\]

\end{construction}

\begin{example} \label{ex::3-2-3}
Let $\pi = \,\vcenter{\hbox{\young(4433,:333,::31,:::0)}} \in \SPP{4}{4}$. According to Examples \ref{ex::3-2-1} and \ref{ex::3-2-2},
$$(f_\text{\ref{construction::SSYT-odd}} \circ \phi_\text{conj})(\pi) = \left(\left(\,\vcenter{\hbox{\young(4431,33,21)}}, (4,2,2) \right), t\right),$$ where $t$ is the same as in Example \ref{ex::3-2-2}.
Since $$\pi' := \phi_\text{conj}^{-1}\left(\left(\,\vcenter{\hbox{\young(4431,33,21)}},(4,2,2) \right) \right) = \,\vcenter{\hbox{\young(4332,:222,::21,:::0)}} \in \eSPP{4}{2},$$
$S(\pi') = \{2,3,4\} \supseteq t^{-1}(1)$. Therefore, $\pi$ corresponds to $(t \mid_{S(\pi')}, \pi')$.
\end{example}

\begin{remark}
In fact, this bijection can also be described by means of a certain algorithm on SPPs, rather than through $\phi_{\mathrm{conj}}$.
In this algorithm, we define disjoint sequences of boxes, each starting from a box in 
$S(\pi)$ and ending at a cell on the diagonal line, and decrement the entries of all boxes in each sequence.
%{\color{red}See Appendix~C for further details.}
\end{remark}

In conclusion, we summarize the construction of the desired bijections.
By combining Construction \ref{construction::spp-espp} with Construction \ref{1-2s::qtc-sta} and Construction \ref{1-2s::qtc-sta-even}, it is sufficient to construct bijections
\[
	f \colon \eSPP{n}{m} \to \stairPP{n}{m},
\]
and
\[
	g_{\pi} \colon S(\pi) \to S(f(\pi)), \quad \text{ for all } \pi \in \eSPP{n}{m},
\]
in order to obtain the desired bijections between $\SPP{n}{2m}$ and $\QTCPP{n}{2m}$, and between $\SPP{n}{2m+1}$ and $\QTCPP{n}{2m+1}$.
For example, $\pi$ in Example \ref{ex::3-2-3} corresponds to $f_\text{\ref{1-2s::qtc-sta-even}}^{-1}\left( (t \mid_{S(\pi')} \circ g_{\pi'}^{-1}, f(\pi')) \right)$.

\hspace{.5cm}
$
\begin{tikzcd}[row sep=large]
  % Row 1
  \SPP{n}{2m}
    \arrow[d, "{\text{Construction~\ref{construction::spp-espp}}}"']
   & \ni
  & \pi
    \arrow[d, mapsto]
  \\[2pt]
  % Row 2
  \displaystyle \bigsqcup_{\pi' \in \eSPP{n}{m}} \{0,1\}^{S(\pi')}
    \arrow[d, "\begin{matrix}\rw{\text{applying }f \text{ and } g,} \\ \text{\rw{ which we shall construct }} \\ \text{\rw{ in the rest of this paper }} \end{matrix}"']
   & \ni 
   & (t,\pi'), \rlap{\quad $t \colon S(\pi') \to \{0,1\}$}
    \arrow[d, mapsto]
  \\[15pt]
  % Row 3
  \displaystyle \bigsqcup_{\pi'' \in \stairPP{n}{m}} \{0,1\}^{S(\pi'')}
    \arrow[d, "f_{\ref{1-2s::qtc-sta-even}}^{-1}"']
   & \ni
  & (t \circ g_{\pi'}^{-1}, f(\pi')), \rlap{ \quad $t \circ g_{\pi'}^{-1} \colon S(f(\pi')) \to \{0,1\}$}
    \arrow[d, mapsto]
  \\[2pt]
  % Row 4
  \QTCPP{n}{2m}
   & \ni
  & f_{\ref{1-2s::qtc-sta-even}}^{-1}\!\left( (t \circ g_{\pi'}^{-1}, f(\pi')) \right)
\end{tikzcd}
$

\section{Preliminaries}\label{sec::prel}
Hereafter we shall construct the bijections $f$ and $g_\pi$ described at the end of Section 2.
This construction is based on knowledge concerning signed sets, sijections, lattice paths, and the so-called LGV lemma.
Let us therefore first review the definitions and properties of signed sets, sijections and compatibilities of sijections, which were established by Fischer, Konvalinka \cite{FK1}, and the author \cite{Inoue}.
\subsection{Signed Sets}
A \textit{signed set} is a pair of disjoint finite sets.
For a signed set $S$, we call its first (resp. second) element the \textit{plus part} (resp. \textit{minus part}) of $S$ and denote it by $S^{+}$ (resp. $S^{-}$).
Namely, $S=(S^{+},S^{-})$.
The support of $S$ is $S^{+} \sqcup S^{-}$, and we denote it by $\supp{S}$.
% For a signed set $S$, we call the set $S^{+} \sqcup S^{-}$ the support of $S$ and denote it by $\supp{S}$.
The \textit{size} of a signed set $S$ is defined by $ \#S= \#S^{+} - \#S^{-}$, where we denote the size of a set $X$ by $\#X$.

Next, we define some basic notions relevant to signed sets. (See also \cite{FK1}.)
\begin{itemize}
\item The \textit{opposite} of a signed set $S$ is $-S := (S^-,S^+)$.
\item The \textit{disjoint union} of two signed sets $S$ and $T$ is $S \sqcup T := (S^{+} \sqcup T^{+},S^{-} \sqcup T^{-})$.
\item The \textit{Cartesian product} of two signed sets $S$ and $T$ is \[S \times T := (S^{+} \times T^{+} \sqcup S^{-} \times T^{-},S^{+} \times T^{-} \sqcup S^{-} \times T^{+}).\]
\end{itemize}
We define the disjoint union and the Cartesian product of a finite number of signed sets in the same manner.
In addition, we can define a \textit{disjoint union with signed index} of a family of signed sets indexed by a signed set (to be rigorous, indexed by the support of the signed set) in the following way.
\begin{definition}
	Let $T$ be a signed set and $\{S_t\}_{t \in \supp{T}}$ a family of signed sets.
	The disjoint union with signed index in $T$ of $\{S_t\}_{t \in \supp{T}}$ is
	\[
		\bigsqcup_{t \in T} S_t := \left( \bigsqcup_{t \in T^{+}}(S_t^+) \sqcup \bigsqcup_{t \in T^{-}}(S_t^-), \bigsqcup_{t \in T^{+}}(S_t^-) \sqcup \bigsqcup_{t \in T^{-}}(S_t^+) \right).
	\]
\end{definition}
We denote an element of $\bigsqcup_{t \in T} S_t$ as $(s_t,t)$ for $s_t \in \supp{S_t}$.
%For simplicity, we will sometimes denote an element of $\bigsqcup_{u \in U} \bigsqcup_{t \in T} S_{t,u}$ as $(s_{t,u},t,u)$ instead of $((s_{t,u},t),u)$.

Let us define a \textit{signed interval}, which is the most basic example of a signed set.
For any two integers $a$ and $b$, a signed interval $\sint{a}{b}$ is defined by,
\[
	\sint{a}{b}=\begin{cases}
		([a,b)\cap\mathbb{Z},\emptyset) & (a < b), \\
		(\emptyset,\emptyset) & (a=b), \\
		(\emptyset,[b,a)\cap\mathbb{Z}) & (a>b).
	\end{cases}
\]
%It is noteworthy that we use half-open intervals to describe signed intervals instead of closed intervals as in \cite{FK1,FK2}.
%Because of this, we have $\sint{b}{a}=-\sint{a}{b}$.
%This relation is crucial and many properties of Gelfand-Tsetlin patterns are easier to establish using this notation.
%For more information, see \S \ref{secofGT}.

\subsection{Sijections}
A \textit{sijection} $ \varphi $ from a signed set $S$ to a signed set $T$ is an involution on $\supp{S} \sqcup \supp{T}$ such that $ \varphi(S^{+} \sqcup T^{-}) = S^{-} \sqcup T^{+}$,
namely a sijection $ \varphi $ is a bijection between $S^{+} \sqcup T^{-}$ and $S^{-} \sqcup T^{+}$.
We denote it by $S \sijto T$.
If there is a sijection from $S$ to $T$, then $\#S = \#T$ holds.
This relation is an analogy of the relation between ordinary sets and a bijection.
In particular, if $ S^{-}=T^{-}=\emptyset $, then we can interpret $S$ and $T$ as ordinary sets and a sijection between them as an ordinary bijection.

A sijection $ \varphi $ can be also recognized as a triplet of a sign-preserving bijection from a subset of $S$ to a subset of $T$, a sign-reversing involution on the remaining part of $S$, and a sign-reversing involution on the remaining part of $T$.
Figure \ref{illust_sij} below illustrates this interpretation of a sijection.
In the figure, the upper-left square represents the set $S^{+}$, and similarly for the other three squares.
The symbol $\sijto$ was inspired by Figure \ref{illust_sij}.

\begin{figure}[h]
\centering
\begin{tikzpicture}
\draw[thick] (0,0) -- (4,0) -- (4,4) -- (0,4) -- (0,0);
\draw[thick] (0,2) -- (4,2);
\draw[thick] (2,0) -- (2,4);
\draw (0,0) -- (4,4);
\draw (4,0) -- (0,4);
\node (S) at (1,4.5) {$S$};
\node (T) at (3,4.5) {$T$};
\node (+) at (-0.5,3) {$+$};
\node (-) at (-0.5,1) {$-$};
\draw[<->] (1.3,3.3) -- (2.7,3.3);
\draw[<->] (1.3,0.7) -- (2.7,0.7);
\draw[->] (0.75,2.7) .. controls (1.15,2.3) and (1.15,1.7) .. (0.75,1.3);
\draw[->] (0.65,1.3) .. controls (0.25,1.7) and (0.25,2.3) .. (0.65,2.7);
\draw[->] (3.35,2.7) .. controls (3.75,2.3) and (3.75,1.7) .. (3.35,1.3);
\draw[->] (3.25,1.3) .. controls (2.85,1.7) and (2.85,2.3) .. (3.25,2.7);
\end{tikzpicture}
\caption{An illustration of a sijection.}
\label{illust_sij}
\end{figure}

Since a sijection $\varphi \colon S \sijto T$ is an involution, we can use the notion ``$\varphi(s)$'' for an element $s$ not only of $S$ but also of $T$.
On the other hand, it is sometimes important to distinguish between the domain and the codomain, especially when we consider a composition of sijections.
Thus, we define an inversion $\varphi^{-1} \colon T \sijto S$ of $\varphi$ by
\[
	\varphi^{-1} = \varphi \quad (\textrm{as an involution}),
\]
and distinguish $\varphi^{-1}$ from $\varphi$ as a sijection.

The simplest example of a sijection is the \textit{identity sijection} $\textrm{id}_S$, defined by
\[\begin{tikzcd}[row sep=tiny]
S \arrow[r,phantom, "\sijto"] 	& S \\
s \arrow[r,leftrightarrow]		& s
\end{tikzcd}\]
for any signed set $S$. Let us introduce a few more examples.% which we shall use in later sections.
%\begin{example}\label{ex_sij}
%For any integers $a$, $b$ and $c$, we can construct a sijection $\varphi \colon \sint{a}{b} \sijto \sint{a}{c} \sqcup \sint{c}{b}$ as follows:
%\begin{quote}
%	In case $a<b<c$, for $x \in \sint{a}{b}$, $\varphi(x)=x \in \sint{a}{c}$ and for $x \in \sint{a}{b} \subset \sint{a}{c}$ (i.e. as an element of the codomain), $\varphi(x)=x \in \sint{a}{b}$. The remaining part of $\sint{a}{c}$ can be related to $\sint{c}{b}$.
%	
%	A similar construction can be given in the other cases.
%\end{quote}
%This sijection is simple but plays an important role in Fischer and Konvalinka's constructions \cite{FK1,FK2}.
%\end{example}
\begin{example}\label{ex_sij}
For any integers $a$, $b$ and $c$, there exists a sijection $\varphi \colon \sint{a}{b} \sijto \sint{a}{c} \sqcup \sint{c}{b}$.
In fact, each integer in $[\min \{a,b,c\}, \max \{a,b,c\}) \cup \mathbb{Z}$ appears twice as an element, and when we let one correspond to the other, we obtain the sijection.
\end{example}

\begin{example}
Let $A$, $B$ be ordinary sets and $f \colon A \to B$ a bijection.
In this situation, we can interpret $f$ as a sijection between $S=(A,B)$ and $(\emptyset,\emptyset)$.
\end{example}

\begin{example}\label{sij_oppo}
Let $S$, $T$ be signed sets and $\varphi \colon S \sijto T$ a sijection.
Since $\varphi$ is also a bijection between $S^+ \sqcup T^-$ and $S^- \sqcup T^+$, by applying the previous example to this bijection,
we have a sijection $S \sqcup -T \sijto (\emptyset,\emptyset)$.
%Then, we have a sijection $S \sqcup -T \sijto (\emptyset,\emptyset)$.
%In fact, $\varphi$ is also a bijection between $S^+ \sqcup T^-$ and $S^- \sqcup T^+$.
%Therefore, this is also a sijection between $(S^+ \sqcup T^-, S^- \sqcup T^+)=S \sqcup -T$ and $(\emptyset,\emptyset)$ by the previous example.
Conversely, a sijection between $S \sqcup -T$ can be interpreted as a sijection between $S$ and $T$.
%In particular, we have $S \sqcup -S \sijto (\emptyset,\emptyset)$ derived from the identity sijection on $S$.
\end{example}
In particular, we have $$S \sqcup -S \sijto (\emptyset,\emptyset)$$ derived from the identity sijection on $S$ for any signed set $S$.
\subsubsection{Composition of sijections}
A composition of sijections is defined as follows \cite{FK1}.
\begin{lemdef}[{\cite[Proposition 2 (1)]{FK1}}]\label{lem:comp}
Let $S$, $T$, $U$ be signed sets and $ \varphi \colon S \sijto T $, $ \psi \colon T \sijto U $ sijections.
For $ s \in \supp{S} $, we define $ \psi \circ \varphi(s) $ as the last well-defined element in the sequence
\[
	s,\, \varphi(s),\, \psi(\varphi(s)),\, \varphi(\psi(\varphi(s))),\, \ldots.
\]
Similarly, for $ u \in \supp{U} $, we define $ \psi \circ \varphi(u) $ as the last well-defined element in the sequence
\[
	u,\, \psi(u),\, \varphi(\psi(u)),\, \psi(\varphi(\psi(u))),\, \ldots.
\]
Then, $ \psi \circ \varphi $ is a sijection from $S$ to $U$, and we call it the composition of $ \varphi $ and $ \psi $.
\end{lemdef}
First we should clarify what we mean by \textit{``the last well-defined element in the sequence \\
$ s,\, \varphi(s),\, \psi(\varphi(s)),\, \varphi(\psi(\varphi(s))),\, \ldots $''}.
For example if $\varphi(s)$ belongs to $\supp{S}$, then $\psi(\varphi(s))$ is not defined because the domain of $\psi$ is $\supp{T} \sqcup \supp{U}$.
Therefore, $ (\psi \circ \varphi)(s) $ is $\varphi(s)$ in this case.
For the original proof of the lemma, see \cite{Doyle}.
Here, we give an alternative proof of this fact using the language of graphs.
A graph in the proof might have multiple edges, so we fix notations relevant to multisets.
We use double braces to describe a multiset, and use the symbol ``$+$'' to describe a sum of multisets.
\begin{proof}[Proof of Lemma \& Definition \ref{lem:comp}]
Consider a graph with vertices $V=\supp{S} \sqcup \supp{T} \sqcup \supp{U}$ and edges $E=R+B$, where
\begin{align*}
	R&=\{\!\{\, \{v,\varphi(v)\} \mid v \in S^{+} \sqcup T^{-}\,\}\!\}, \\
	B&=\{\!\{\, \{v,\psi(v)\} \mid v \in T^{+} \sqcup U^{-}\,\}\!\}.
\end{align*}
Note that the graph is an undirected bipartite finite graph, where one part is $S^{+} \sqcup T^{-} \sqcup U^{+}$ and the other part is $S^{-} \sqcup T^{+} \sqcup U^{-}$.
We consider painting edges in R with red and those in B with blue.
Then, by the definitions, each vertex belongs to at most one red edge and at most one blue edge.
Because of the degrees of the vertices and the finiteness of the graph, the graph consists of finitely many line graphs and cycles.
Moreover, all the endpoints of these line graphs belong to $\supp{S} \sqcup \supp{U}$ and for each line graph or cycle the color of the edges alternate.
Please refer to Figure \ref{fig::comp}  below for an example:
dashed edges represent those in $B$ and the others those in $R$.
In addition, the shape of a node corresponds to which part it belongs to as a vertex of the bipartite graph.
\begin{figure}[h]
\centering
\begin{tikzpicture}[main/.style = {draw, circle}]
\node (S) at (0,7) {$S$};
\node (phi) at (1.5,7.1) {$\overset{\varphi}{\sijto}$};
\node (T) at (3,7) {$T$};
\node (phi) at (4.5,7.1) {$\overset{\psi}{\sijto}$};
\node (U) at (6,7) {$U$};
\node (+) at (-2,4.5) {$+$};
\node (-) at (-2,1) {$-$};
\node[main] (1) at (0,5.5) {$+$};
\node[main] (2) at (0,3.5) {$+$};
\node[draw,rectangle] (3) at (0,1) {$-$};
\node[draw,rectangle] (4) at (3,6) {$+$};
\node[draw,rectangle] (5) at (3,5) {$+$};
\node[draw,rectangle] (6) at (3,4) {$+$};
\node[draw,rectangle] (7) at (3,3) {$+$};
\node[main] (8) at (3,2) {$-$};
\node[main] (9) at (3,1) {$-$};
\node[main] (10) at (3,0) {$-$};
\node[main] (11) at (6,5.5) {$+$};
\node[main] (12) at (6,3.5) {$+$};
\node[draw,rectangle] (13) at (6,1) {$-$};
\draw[-,thick,red] (1) -- (4);
\draw[-,thick,red] (2) -- (5);
\draw[-,thick,red] (3) -- (10);
\draw[-,thick,red] (6) to [out=180,in=180] (9);
\draw[-,thick,red] (7) to [out=180,in=180] (8);
\draw[-,dashed,thick,blue] (4) -- (11);
\draw[-,dashed,thick,blue] (5) to [out=0,in=0] (9);
\draw[-,dashed,thick,blue] (6) to [out=0,in=0] (10);
\draw[-,dashed,thick,blue] (7) to [out=0,in=0] (8);
\draw[-,dashed,thick,blue] (12) to [out=180,in=180] (13);
\draw (-2.5,-0.5) to (7.5,-0.5);
\draw (-2.5,2.5) to (7.5,2.5);
\draw (-2.5,6.5) to (7.5,6.5);
\end{tikzpicture}
\caption{A composition of sijections.}
\label{fig::comp}
\end{figure}

Consider $s \in \supp{S}$.
The degree of $s$ is $1$, so it is an endpoint of a line graph.
Therefore, $ \psi \circ \varphi(s) $ is the other endpoint of the line graph.
If $ \psi \circ \varphi(s) \in \supp{S} $, the first edge and the last edge in the line graph are both red, so its length is odd.
Thus, $s$ and $ \psi \circ \varphi(s) $ belong to different parts of $G$ as a bipartite graph.
Therefore, the sign of $\psi \circ \varphi(s)$ is different from that of $s$.
In the other case, namely when $ \psi \circ \varphi(s) \in \supp{U} $, the length of the line graph is even and $ \psi \circ \varphi(s) $ belongs to the same part as $s$,
so the sign of $\psi \circ \varphi(s)$ is equal to that of $s$.

For $ u \in \supp{U} $, we can give a similar proof. Therefore, $\psi \circ \varphi$ is a well-defined sijection from $S$ to $U$.
\end{proof}

Relating to this proof we define a graph $G=(V,E)$ of a sijection $\varphi \colon S \sijto T$ as follows:
\begin{align*}
	V &:= \supp{S} \sqcup \supp{T}, \\
	E &:= \{\!\{\, \{v,\varphi(v)\} \mid v \in S^{+} \sqcup T^{-}\,\}\!\}.
\end{align*}
This notion is useful to prove some lemmas and to understand properties of sijections.
For example, with this language, the sijection $\varphi \colon \sint{1}{3} \sqcup \sint{3}{2} \sijto \sint{1}{2}$ in Example \ref{ex_sij} is described as:
\[
\begin{tikzpicture}[main/.style = {draw, circle}]
\node (S) at (0,3) {$\sint{1}{3} \sqcup \sint{3}{2}$};
\node (T) at (3,3) {$\sint{1}{2}$};
\node (phi) at (-1.9,3) {$\varphi \colon$};
\node (sij) at (1.8,3) {$\sijto$};
\node (+) at (-2,1.5) {$+$};
\node (-) at (-2,0) {$-$};
\draw (-2.5,-0.5) to (3.5,-0.5);
\draw (-2.5,0.5) to (3.5,0.5);
\draw (-2.5,2.5) to (3.5,2.5);
\node[main] (1) at (0,2) {$1$};
\node[main] (2) at (0,1) {$2$};
\node[main] (3) at (0,0) {$2$};
\node[main] (4) at (3,2) {$1$};
\draw[-] (1) -- (4);
\draw[-] (2) to [out=0,in=0] (3);
\end{tikzpicture}
\]

%\begin{remark}
%There are two reasons why we impose finiteness on signed sets.
%One of them is because, without the finiteness it could simply be meaningless since $(\mathbb{N},\mathbb{N})$ is in sijection to arbitrary signed sets (and any infinite ``signed set'').
%The other reason is because it is needed for the well-definedness of compositions of sijections.
%For example, let $S=(\{1\},\emptyset)$, $T=(2\mathbb{N}, 2\mathbb{N}+1)$, $U=(\emptyset,\emptyset)$, $\varphi(2x-1)=2x$ and $\psi(2x)=2x+1$.
%If we define a graph in the same way as in finite cases, the connected component that $1 \in S^+$ belongs to is still a line graph but not finite since we have
%\[
%	\varphi(1) = 2,\quad \psi(\varphi(1))=3,\quad \varphi(\psi(\varphi(1)))=4,\quad \ldots.
%\]
%Thus, the proof and the definition are broken in infinite cases.
%\end{remark}

Last, we prove that compositions of sijections have the associative property.
\begin{proposition}[\cite{Doyle}, Corollary 3]
Let $S$, $T$, $U$ and $V$ be signed sets and $ \varphi \colon S \sijto T $, $ \psi \colon T \sijto U $, and $ \xi \colon U \sijto V $ sijections. Then,
\[
	\xi \circ ( \psi \circ \varphi ) = ( \xi \circ \psi ) \circ \varphi .
\]
\end{proposition}
\begin{proof}
Consider a graph with vertices $ \supp{S} \,\sqcup\, \supp{T} \,\sqcup\, \supp{U} \,\sqcup\, \supp{V} $ and edges
$ \{\!\{\, \{v,\varphi(v)\} \mid v \in S^{+} \sqcup T^{-}\,\}\!\} + \{\!\{\, \{v,\psi(v)\} \mid v \in T^{+} \sqcup U^{-}\,\}\!\} + \{\!\{\, \{v,\xi(v)\} \mid v \in U^{+} \sqcup V^{-}\,\}\!\} $.
This graph consists of line graphs and cycles.
Moreover, for any endpoint $v$ of a line graph the other endpoint of the line graph is $(\xi \circ ( \psi \circ \varphi ))(v) = (( \xi \circ \psi ) \circ \varphi)(v)$.
\end{proof}

\subsubsection{Cartesian products of sijections} \label{sec::Cart_sij}
The definition of the Cartesian product of sijections is a bit more complicated.
For sijections $\varphi_i \colon S_i \sijto T_i$ ($i=1,2$),
we can construct a sijection $\varphi_1 \times \varphi_2$ between $S_1 \times S_2$ and $T_1 \times T_2$ as follows:
\begin{quote}
	For $s=(s_1,s_2) \in \supp{S_1 \times S_2} = \supp{S_1} \times \supp{S_2}$,
	\[
		(\varphi_1 \times \varphi_2)(s)
		= \begin{cases}
		(\varphi_1(s_1),s_2) &\qquad \varphi_1(s_1) \in \supp{S_1}, \\
		(s_1,\varphi_2(s_2)) &\qquad \varphi_1(s_1) \in \supp{T_1} \text{ and } \varphi_2(s_2) \in \supp{S_2}, \\
		(\varphi_1(s_1),\varphi_2(s_2)) &\qquad \varphi_1(s_1) \in \supp{T_1} \text{ and } \varphi_2(s_2) \in \supp{T_2},
		\end{cases}
	\]
	and for $t=(t_1,t_2) \in \supp{T_1 \times T_2} = \supp{T_1} \times \supp{T_2}$,
	\[
		(\varphi_1 \times \varphi_2)(t)
		= \begin{cases}
		(\varphi_1(t_1),t_2) &\qquad \varphi_1(t_1) \in \supp{T_1}, \\
		(t_1,\varphi_2(t_2)) &\qquad \varphi_1(t_1) \in \supp{S_1} \text{ and } \varphi_2(t_2) \in \supp{T_2}, \\
		(\varphi_1(t_1),\varphi_2(t_2)) &\qquad \varphi_1(t_1) \in \supp{S_1} \text{ and } \varphi_2(t_2) \in \supp{S_2}.
		\end{cases}
	\]
\end{quote}
It is easy to check that this sijection is indeed an involution on $\supp{S_1 \times S_2} \sqcup \supp{T_1 \times T_2}$ and that it meets the sign conditions:
\begin{itemize}
\item if $(\varphi_1 \times \varphi_2)(s) \in \supp{S_1 \times S_2}$, the sign of $(\varphi_1 \times \varphi_2)(s)$ is different from the sign of $s$,
\item if $(\varphi_1 \times \varphi_2)(s) \in \supp{T_1 \times T_2}$, the sign of $(\varphi_1 \times \varphi_2)(s)$ is equal to the sign of $s$,
\item and similar results are true for $t \in \supp{T_1 \times T_2}$.
\end{itemize}
Therefore, it is indeed a sijection.
Additionally, this definition has the associative property.
\begin{proposition}\label{prop_asso_Cart}
Let $ \varphi_i \colon S_i \sijto T_i $ be a sijection for $i=1,2,3$. Then, we have
\[
	(\varphi_1 \times \varphi_2) \times \varphi_3 = \varphi_1 \times (\varphi_2 \times \varphi_3).
\]
\end{proposition}
\begin{proof}
Regardless of how one puts the parentheses, if all $\varphi_i$ send $s_i$ to $T_i$, then $s=(s_1,s_2,s_3)$ corresponds to $(\varphi_1(s_1),\varphi_2(s_2),\varphi_3(s_3))$.
Otherwise, only the leftmost element $s_i$ of $s$ such that $\varphi_i(s_i) \in S_i$ is replaced with $\varphi_i(s_i)$.
\end{proof}

Thus, we can define the Cartesian product of a finite number of sijections as follows. (See also Proposition 2(2) of \cite{FK1}.)
\begin{definition}
Let $S_1,S_2,\ldots,S_k$, $T_1,T_2,\ldots,T_k$ be signed sets and let $ \varphi_i \colon S_i \sijto T_i $ be a sijection for $i=1,2,\ldots,k$.
We define
\[
	\varphi_1 \times \varphi_2 \times \cdots \times \varphi_k = (\cdots((\varphi_1 \times \varphi_2) \times \varphi_3) \times \cdots ) \times \varphi_k.
\]
\end{definition}
Let $s=(s_1,s_2,\ldots,s_k) \in \supp{S_1 \times S_2 \times \cdots \times S_k} = \supp{S_1} \times \supp{S_2} \times \cdots \times \supp{S_k}$.
Then, according to the proof of Proposition \ref{prop_asso_Cart},
when $\varphi_i(s_i) \in \supp{T_i}$ for all $i = 1,2,\ldots,k$, we have
\[
	(\varphi_1 \times \varphi_2 \times \cdots \times \varphi_k)(s) = (\varphi_1(s_1),\varphi_2(s_2),\ldots,\varphi_k(s_k)).
\]
Otherwise, we have
\[
	(\varphi_1 \times \varphi_2 \times \cdots \times \varphi_k)(s) = (s_1,s_2,\ldots,s_{j-1},\varphi_j(s_j),s_{j+1},\ldots,s_k),
\]
where $j$ is the minimum index such that $\varphi_j(s_j) \in \supp{S_j}$.
For an element of $\supp{T_1 \times T_2 \times \cdots \times T_k}$, a similar expression can be given.

\begin{Remark} \label{rem-2.2.2}
We remark on a disadvantageous feature of this Cartesian product of sijections.
Let $\varphi_i \colon S_i \sijto T_i$ and $\psi_i \colon T_i \sijto U_i$ be sijections for $i=1,2$.
In general, it does not hold that
\begin{equation} \label{cartesian-counterex}
	(\psi_1 \times \psi_2) \circ (\varphi_1 \times \varphi_2) = (\psi_1 \circ \varphi_1) \times (\psi_2 \circ \varphi_2).
\end{equation}
For example, let $S=(\{A\},\emptyset)$, $T=(\{A,B\},\{B^\dag\})$ and define a sijection $\varphi$ as
\[
\begin{tikzpicture}[main/.style = {draw, circle}]
\node (S) at (0,3) {$S$};
\node (T) at (3,3) {$T$};
\node (phi) at (-1.9,3) {$\varphi \colon$};
\node (sij) at (1.5,3) {$\sijto$};
\node (+) at (-2,1.5) {$+$};
\node (-) at (-2,0) {$-$};
\draw (-2.5,-0.5) to (3.5,-0.5);
\draw (-2.5,0.5) to (3.5,0.5);
\draw (-2.5,2.5) to (3.5,2.5);
\node[main] (1) at (0,2) {$A$};
\node[main] (2) at (3,1) {$B$};
\node[scale=0.84,main] (3) at (3,0) {$B^\dag$};
\node[main] (4) at (3,2) {$A$};
\draw[-] (1) -- (4);
\draw[-] (2) to [out=180,in=180] (3);
\end{tikzpicture}.
\]

If the relation (\ref{cartesian-counterex}) is always true, it must hold that
\[
	(\text{id}_T \times \varphi) \circ (\varphi \times \text{id}_S) = \varphi \times \varphi =  (\varphi \times \text{id}_T) \circ (\text{id}_S \times \varphi).
\]
However, we have by a simple calculation that
\[
	((\text{id}_T \times \varphi) \circ (\varphi \times \text{id}_S))((B,B)) = (B,B^\dag) \ne (B^\dag,B) = ((\varphi \times \text{id}_T) \circ (\text{id}_S \times \varphi))((B,B)),
\]
so we have a contradiction. Thus, the relation (\ref{cartesian-counterex}) is not always true.

This feature is inevitable, namely it is not due to a defect in the definition.
For instance, in the above situation, we cannot determine which of $(B,B^\dag)$ and $(B^\dag,B)$ should be $(\varphi \times \varphi)((B,B))$ a priori because of symmetry.
\end{Remark}

\subsubsection{Disjoint unions of sijections}
For sijections $\varphi_i \colon S_i \sijto T_i$ ($i=1,2$), the disjoint union $\varphi_1 \sqcup \varphi_2 \colon S_1 \sqcup S_2 \sijto T_1 \sqcup T_2$ is defined by
\[ (\varphi_1 \sqcup \varphi_2)(s) = \begin{cases} \varphi_1(s) & s \in \supp{S_1} \sqcup \supp{T_1}, \\ \varphi_2(s) & s \in \supp{S_2} \sqcup \supp{T_2}. \end{cases}\]
We define the disjoint union of a finite number of sijections in the same manner.
Considering the graph of sijections, the disjoint union is just a juxtaposition, so we have $(\varphi_1 \sqcup \varphi_2) \sqcup \varphi_3 = \varphi_1 \sqcup (\varphi_2 \sqcup \varphi_3)$.

Next, we define the disjoint union with signed index of a family of sijections.
\begin{definition}[Proposition 2(3) in \cite{FK1}] \label{def_sij_DIS}
Let $T$, $\widetilde{T}$ be signed sets and $\psi \colon T \sijto \widetilde{T}$ a sijection.
Assume that we have signed sets $\{ S_t \}_{t \in \supp{T} \sqcup \supp{\widetilde{T}}}$ and sijections
$ \{ \varphi_t \colon S_t \sijto S_{\psi(t)} \}_{ t \in T^+ \sqcup \widetilde{T}^-}  $.
Let $\varphi_t = \varphi_{(\psi(t))}^{-1}$ for $t \in T^- \sqcup \widetilde{T}^+$.
Then, we have a sijection $\varphi$ from $\bigsqcup_{t \in T} S_t$ to $\bigsqcup_{t \in \widetilde{T}} S_t$ defined by
\[
	\varphi\left((s_t,t)\right)=\begin{cases} (\varphi_t(s_t),t) & \text{if } \varphi_t(s_t) \in S_t, \\ (\varphi_t(s_t),\psi(t)) & \text{if } \varphi_t(s_t) \in S_{\psi(t)}. \end{cases}
\]
We denote it by $$\bigsqcup_{t \,;\, \psi \colon T \subsijto \widetilde{T}} \varphi_t \qquad \text{ or } \qquad \bigsqcup_{t \,;\, \psi} \varphi_t. $$
\end{definition}

Considering the graphs of sijections, the disjoint union of this type is also a juxtaposition. Therefore, it is indeed a sijection.

When $T=\widetilde{T}$ and $\psi=\mathrm{id}_T$ hold, the situation becomes simple.
Let $\{ \varphi_t \colon S_t \sijto \widetilde{S}_t \}_{t \in \supp{T}}$ be sijections, then we have
\[
	\bigsqcup_{t \in T} \varphi_t \left( := \bigsqcup_{t \,;\, \id_T} \varphi_t \right)\colon \bigsqcup_{t \in T} S_t \sijto \bigsqcup_{t \in T} \widetilde{S}_t.
\]

%Then, when we have sijections $\{ \varphi_t \colon S_t \sijto \widetilde{S}_t \}_{t \in \supp{T}}$, by applying the definition we have
%\[
%	\bigsqcup_{t \in T} \varphi_t \colon \bigsqcup_{t \in T} S_t \sijto \bigsqcup_{t \in T} \widetilde{S}_t.
%\]

%The constructions of sijections in the following Examples (\ref{sij_dist} - \ref{du_dist}) become trivial by considering their graphs.
%In particular, they can be decomposed to a bijection between plus parts and one between minus parts.
%Therefore, we shall use ``$=$'' to describe these sijections in later constructions of much more complicated sijections.
We shall construct another four canonical sijections.
\begin{construction}\label{sij_dist}
Let $S$, $T_1$ and $T_2$ be signed sets. Then we have a canonical sijection
\[
	S \times (T_1 \sqcup T_2) \sijto (S \times T_1) \sqcup (S \times T_2),
\]
which is derived from the identities
\begin{align*}
	\left( S \times (T_1 \sqcup T_2) \right)^+ &= \left( (S \times T_1) \sqcup (S \times T_2) \right)^+ \\
	&= (S^+ \times T_1^+) \sqcup (S^+ \times T_2^+) \sqcup (S^- \times T_1^-) \sqcup (S^- \times T_2^-),\\
	\left( S \times (T_1 \sqcup T_2) \right)^- &= \left( (S \times T_1) \sqcup (S \times T_2) \right)^- \\
	&= (S^+ \times T_1^-) \sqcup (S^+ \times T_2^-) \sqcup (S^- \times T_1^+) \sqcup (S^- \times T_2^+).
\end{align*}
\end{construction}
\begin{construction}\label{sij_dist_family}
Let $S$ and $T$ be signed sets and $\{S_t\}_{t \in \supp{T}}$ a family of signed sets. Then, we have a canonical sijection
\[
	S \times \bigsqcup_{t \in T} S_t \sijto \bigsqcup_{t \in T} \left( S \times S_t \right)
\]
which is derived from the identities.
\end{construction}
\begin{construction}\label{du_dist}
Let $T$, $U$ be signed sets and $\{S_v\}_{v \in \supp{T} \sqcup \supp{U}}$ a family of signed sets.
Then, we have a canonical sijection
\[
	\bigsqcup_{t \in T} S_t \sqcup \bigsqcup_{u \in U} S_u \sijto \bigsqcup_{v \in T \sqcup U} S_v,
\]
which is derived from the identities.
\end{construction}
\begin{construction}\label{du_ord}
Let $S_1$ and $S_2$ be signed sets and $\{ T_{s_1,s_2} \}_{s_1 \in \supp{S_1}, s_2 \in \supp{S_2}}$ a family of signed sets.
Then, we have a canonical sijection
\[
	\bigsqcup_{s_1 \in S_1} \bigsqcup_{s_2 \in S_2} T_{s_1,s_2} \sijto \bigsqcup_{s_2 \in S_2} \bigsqcup_{s_1 \in S_1} T_{s_1,s_2},
\]
which is derived from bijections of the form
\[
	\bigsqcup_{a \in A} \bigsqcup_{b \in B} C_{a,b} \to \bigsqcup_{b \in B} \bigsqcup_{a \in A} C_{a,b} \colon ((c,b),a) \mapsto ((c,a),b),
\]
where $A$, $B$ and $C_{a,b}$ (for each $a \in A$ and $b \in B$) are ordinary sets.
\end{construction}
\begin{construction}\label{du_nest}
Let $U$ be a signed set and $\{T_u\}_{u\in\supp{U}}$, $\{S_{t,u}\}_{u \in\supp{U},t \in\supp{T_u}}$ families of signed sets.
Then, we have a canonical sijection
\[
	\bigsqcup_{u \in U} \bigsqcup_{t \in T_u} S_{t,u} \sijto \bigsqcup_{t \in \bigsqcup_{u \in U} T_u} S_{t,u},
\]
which is derived from bijections of the form
\[
	\bigsqcup_{a \in A} \bigsqcup_{b \in B_a} C_{a,b} \to \bigsqcup_{b \in \bigsqcup_{a \in A} B_a} C_{a,b} \colon ((c,b),a) \mapsto (c,(b,a)).
\]
\end{construction}

These constructions can be decomposed to bijections, which are almost identities.
In these cases, we shall identify the domain with the codomain and  use ``$=$'' to describe these sijections.

%As a last remark in this section, in the following we fix operator precedence for signed sets as:
Lastly, we explain the operator precedence for signed sets.
The operators should be evaluated in the order:
\begin{enumerate}
\item taking the opposite ($-$),
\item the Cartesian products ($\times$),
\item disjoint unions ($\sqcup$).
\end{enumerate}
For example, we can write $S \times T_1 \sqcup S \times T_2$ instead of $(S \times T_1) \sqcup (S \times T_2)$,
but not $S \times T_1 \sqcup T_2$ instead of $S \times (T_1 \sqcup T_2)$.

\subsection{Compatibility}
\begin{definition}
A \textit{statistic} of a signed set is a function on (at least) its support.
Let $S$, $T$ be signed sets and $\eta$ a statistic of $S \sqcup T$. 
A sijection $\phi \colon S \sijto T$ is \textit{compatible with $\eta$} if
\[
	\forall s \in \supp{S \sqcup T},\,\eta(\phi(s))=\eta(s).
\]
\end{definition}
The co-domain of statistics does not matter, so we will not pay attention to it.
%For simplicity, we will sometimes write $ s \insupp  S $ instead of $s \in \supp{S}$.
%Compatibility is a generalization of the notion of \textit{normality} which is used in \cite{FK1}.
%For example, let us consider the sijection $\varphi$ in Example \ref{ex_sij}, which is normal in the language of \cite{FK1}.
%We can recognize each element of the support of  $\sint{a}{b} \sqcup (\sint{a}{c} \sqcup \sint{c}{b})$ as an integer in the canonical way,
%so we can define a canonical $\mathbb{Z}$-valued statistic of $\sint{a}{b} \sqcup (\sint{a}{c} \sqcup \sint{c}{b})$.
%We can define a canonical $\mathbb{Z}$-valued statistic of $\sint{a}{b} \sqcup (\sint{a}{c} \sqcup \sint{c}{b})$,
%because we can recognize each element of the support of  $\sint{a}{b} \sqcup (\sint{a}{c} \sqcup \sint{c}{b})$ as an integer in the canonical way.
%The sijection $\varphi$ is compatible with this statistic.
%We call statistics of this type \textit{normal} statistics.
%The precise definition is given in Definition \ref{defofnormal}.
It is important to note that compatibility is preserved under compositions. % and so is normality.
\begin{lemma}\label{lem:comp_comp}
Let $\phi \colon S \sijto T$, $\psi \colon T \sijto U$ be sijections and $\eta$ a statistic of $S \sqcup T \sqcup U$.
If $\phi$ and $\psi$ are compatible with $\eta$, then $\psi \circ \phi$ is compatible with $\eta$.
\end{lemma}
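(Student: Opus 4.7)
The plan is to unpack the definition of the composition of two sijections and then observe that $\eta$ is constant along the trajectories that define the composition. First I would recall the standard construction of $\psi \circ \phi \colon S \sijto U$: given $x \in S^+ \sqcup U^-$, the image $(\psi \circ \phi)(x)$ is obtained by alternately applying $\phi$ (or its inverse) and $\psi$ (or its inverse), starting from $x$, until the trajectory first lands inside $S^- \sqcup U^+$. Equivalently, one builds the graph on vertex set $S^+ \sqcup S^- \sqcup T^+ \sqcup T^- \sqcup U^+ \sqcup U^-$ whose edges record the pairings induced by $\phi$ (between $S^+ \sqcup T^-$ and $S^- \sqcup T^+$) and by $\psi$ (between $T^+ \sqcup U^-$ and $T^- \sqcup U^+$). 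Every $T$-vertex has degree exactly $2$, while every $S$- and $U$-vertex has degree exactly $1$, so this graph decomposes into cycles supported in $T$ and simple paths whose endpoints lie in $S \sqcup U$; the sijection $\psi \circ \phi$ is precisely the pairing of the two endpoints of each such path.

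With this picture in place, the key observation is that compatibility of $\phi$ with $\eta$ says exactly that the two endpoints of every $\phi$-edge carry the same value of $\eta$, and similarly for $\psi$. A one-step induction along a path then shows that $\eta$ is constant on every connected component of the graph. In particular, the two endpoints of each path, which are paired by $\psi \circ \phi$, share a common value of $\eta$, which is exactly the definition of compatibility of $\psi \circ \phi$ with $\eta$.

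The only mildly delicate point is to verify that the alternating process above actually terminates, so that $\psi \circ \phi$ is well-defined to begin with; this follows immediately from the finiteness of $T$ together with the injectivity of $\phi$ and $\psi$, which prevent the trajectory from revisiting any vertex of $T$. Once that bookkeeping is dispensed with, the $\eta$-invariance statement is essentially automatic, so I do not expect any substantive obstacle beyond setting up the graph-theoretic reformulation of the composition.
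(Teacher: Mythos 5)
Your proposal is correct and follows essentially the same route as the cited proof of \cite[Lemma 3.2]{Inoue}: the composition $\psi \circ \phi$ is defined by the alternating-path (trajectory) construction through $T$, compatibility of $\phi$ and $\psi$ makes $\eta$ constant along every edge and hence along every connected component, and therefore the two endpoints paired by $\psi \circ \phi$ carry the same value of $\eta$. The termination remark is the standard finiteness argument and is handled the same way there.
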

\begin{proof}
For any $s \in \supp{S}$, $\eta(s)=\eta(\phi(s))=\eta(\psi(\phi(s)))=\cdots$ while they are well-defined.
Therefore, $\eta(s) = \eta(\psi \circ \phi(s))$.
The same is true for an element of $\supp{U}$.
\end{proof}

Similarly, compatibility is preserved under disjoint unions and Cartesian products.
\begin{lemma}\label{lemofsqcup}
Let $ \varphi_i \colon S_i \sijto T_i $ be a sijection compatible with a statistic $\eta_i$ for $i=1,2$.
When we define a statistic $\eta=\eta_1\sqcup\eta_2$ by
\begin{align*}
	\eta(u)=(\eta_1\sqcup\eta_2)(u)=\begin{cases}
		\eta_1(u) & (u \in \supp{S_1 \sqcup T_1}) \\
		\eta_2(u) & (u \in \supp{S_2 \sqcup T_2}), \end{cases}
\end{align*}
$\varphi_1 \sqcup \varphi_2$ is compatible with $\eta_1\sqcup\eta_2$.
\end{lemma}
\begin{lemma}\label{lemoftimes}
Let $ \varphi_i \colon S_i \sijto T_i $ be a sijection compatible with a statistic $\eta_i$ for $i=1,2$.
When we define a statistic $\eta=\eta_1\times\eta_2$ by
\begin{align*}
	\eta((u_1,u_2))=(\eta_1\times\eta_2)(u)=(\eta_1(u_1),\eta_2(u_2)),
\end{align*}
$\varphi_1 \times \varphi_2$ is compatible with $\eta_1\times \eta_2$.
\end{lemma}
By these definitions, a disjoint union and the Cartesian product of more than two sijections can be treated in the same manner.
Furthermore, in the situation of Definition \ref{def_sij_DIS}, if all the sijections $\{\varphi_t\}$ are compatible with some statistic $\eta$, %if for all $t \in T^+ \sqcup \widetilde{T}^-$, the sijection $\varphi_t$ is %$\{\varphi_t\}_{t \in T^+ \sqcup \widetilde{T}^-}$
% compatible with some statistic $\eta$,
then $\bigsqcup_{t \,;\, \psi} \varphi_t$ is also compatible with $\eta$.
Proofs can be given either by considering the graphs of sijections or by using the definitions.

\subsection{A Refinement of a Compatibility of Sijections}
We must slightly generalize the notion of compatibility to fit the context of the present research.
Let $A$ and $B$ be signed sets and $S_A$ and $S_B$ set-valued statistics on $A$ and $B$, respectively.
Note that $\# S_A$ and $\# S_B$ are ordinal statistics on $A$ and $B$, respectively.
Let $\phi$ be a compatible sijection $\left(A, \# S_A\right) \sijto \left(B, \# S_B\right)$.
We call a family $\{ g_a\}_{a \in \supp{A} \sqcup \supp{B}}$ of bijections a refinement of the compatible sijection $\phi$ if both of the following conditions are met for all $a \in \supp{A} \sqcup \supp{B}$:
\begin{itemize}
\item $g_a \colon S(a) \to S(\phi(a))$, where $S$ is a set-valued function such that $S(a) = \begin{cases}S_A(a) & (a \in \supp{A}) \\ S_B(a) & (b \in \supp{B}) \end{cases}$.
\item $g_{\phi(a)}^{-1} = g_a$.
\end{itemize}
In many cases, a refinement can be recovered from an explicit construction of the compatible sijection itself.
Note that the composition of sijections does not diminish the possibility of recovering a refinement, as compatibility is preserved under compositions.
In such cases, we simply say that the sijection is compatible with $S = S_A \sqcup S_B$ and denote it by $\left(A, S_A\right) \sijto \left(B, S_B\right)$.

In the context of the present research, we will consider \( A = \eSPP{n}{M} \) and \( B = \stairPP{n}{m} \),
and we will need to construct a compatible sijection $(A, \#S) \sijto (B, \#S)$,
where \( S \) is defined in Section~\ref{Sec::1-2s}, together with a refinement of this compatible sijection.
In other words, it will be sufficient to construct a compatible sijection $(A, S) \sijto (B, S)$.

\subsection{the LGV lemma}
Non-intersecting lattice paths and the LGV lemma are well-known methods to count plane partitions \cite{GV, Lind}.
We review the statement of the LGV lemma and illustrate this counting method on the example of $\stairPP{n}{m}$ which will appear in the next subsection.

\begin{theorem}[Lindstr\"{o}m--Gessel--Viennot lemma]
Let $\Gamma$ be a finite\footnote{In general, $\Gamma$ can be locally finite rather than finite, considering formal variables.} directed acyclic graph and $a$ and $b$ vertices of $\Gamma$.
We define $\mathcal{P}_{\Gamma}(a \to b)$ as the set of paths from $a$ to $b$.
Let $\mathbf{a}=(a_1,a_2,\ldots,a_n)$ and $\mathbf{b}=(b_1,b_2,\ldots,b_n)$ sequences of vertices of $\Gamma$.
We define the signed set of paths from $\mathbf{a}$ to $\mathbf{b}$, $\mathcal{P}_\Gamma(\mathbf{a},\mathbf{b})$, by
\[
	\mathcal{P}_\Gamma(\mathbf{a},\mathbf{b}) = \bigsqcup_{\sigma \in S_n} \sgn{\sigma} \prod_{i=1}^n \mathcal{P}_{\Gamma}(a_i \to b_{\sigma_i}).
\]
In addition, we denote the set of non-intersecting paths from $\mathbf{a}$ to $\mathbf{b}$ by $\mathcal{P}_\Gamma^{\text{NI}}(\mathbf{a},\mathbf{b})$.
Here, non-intersecting means that they do not have a common vertex.

Then, the number of non-intersecting paths from $\mathbf{a}$ to $\mathbf{b}$ is given by the following determinant:
\[
	\begin{pmatrix}
	\# \mathcal{P}_{\Gamma}(a_1 \to b_1) & \# \mathcal{P}_{\Gamma}(a_1 \to b_2) & \cdots & \# \mathcal{P}_{\Gamma}(a_1 \to b_n) \\
	\# \mathcal{P}_{\Gamma}(a_2 \to b_1) & \# \mathcal{P}_{\Gamma}(a_2 \to b_2) & \cdots & \# \mathcal{P}_{\Gamma}(a_2 \to b_n) \\
	\vdots & \vdots & \ddots & \vdots \\
	\# \mathcal{P}_{\Gamma}(a_n \to b_1) & \# \mathcal{P}_{\Gamma}(a_n \to b_2) & \cdots & \# \mathcal{P}_{\Gamma}(a_n \to b_n)
	\end{pmatrix}.
\]
In particular, we have
\[
	\# \mathcal{P}_\Gamma^{\text{NI}}(\mathbf{a},\mathbf{b}) = \# \mathcal{P}_\Gamma(\mathbf{a},\mathbf{b}).
\]
\end{theorem}

A bijective proof of the LGV lemma can be obtained by constructing a sign-reversing involution on
$\mathcal{P}_{\Gamma}(\mathbf{a},\mathbf{b}) \setminus \mathcal{P}_{\Gamma}^{\mathrm{NI}}(\mathbf{a},\mathbf{b})$ as follows:

\begin{quote}
Fix a topological order of the directed acyclic graph $\Gamma$.
Let
$P=(P_1,P_2,\ldots,P_n)\in 
\mathcal{P}_{\Gamma}(\mathbf{a},\mathbf{b}) \setminus 
\mathcal{P}_{\Gamma}^{\mathrm{NI}}(\mathbf{a},\mathbf{b})$,
and let $\sigma$ be the unique permutation satisfying
$P_i\colon a_i\to b_{\sigma_i}$ for all $i$.

Since $P$ is not non-intersecting, there exists a vertex $v\in V(\Gamma)$ 
that is visited by two distinct paths in the tuple.
Choose $v$ to be the \emph{earliest} such vertex in the fixed topological order, 
and among the paths passing through $v$, let $P_{i_0}$ and $P_{i_1}$ 
(with $i_0<i_1$) be those corresponding to the two smallest indices.

Decompose these two paths at the vertex $v$ as
\[
P_{i_0}=A_0 \cdot B_0,\qquad
P_{i_1}=A_1 \cdot B_1,
\]
where $A_j$ is the segment from $a_{i_j}$ to $v$ 
and $B_j$ is the segment from $v$ to $b_{\sigma_{i_j}}$ 
for $j\in\{0,1\}$.
Then, define a new $n$-tuple of paths 
$Q=(Q_1,Q_2,\ldots,Q_n)$ by swapping the tails $B_0$ and $B_1$,
\[
Q_{i_0}=A_0\cdot B_1,\qquad
Q_{i_1}=A_1\cdot B_0,
\]
and let $Q_i=P_i$ for all $i\neq i_0,i_1$.

The permutation associated with $Q$ is thus obtained from $\sigma$
by composing with the transposition $(i_0\ i_1)$.
Hence the corresponding sign is reversed, 
and the map $P\mapsto Q$ is an involution.
\end{quote}

Obviously, this involution and the identity map on $\mathcal{P}_{\Gamma}^{\mathrm{NI}}(\mathbf{a},\mathbf{b})$ form a sijection
$\mathcal{P}_{\Gamma}(\mathbf{a},\mathbf{b}) \sijto \mathcal{P}_{\Gamma}^{\mathrm{NI}}(\mathbf{a},\mathbf{b})$.

\subsection{Lattice paths pertaining to $\stairPP{n}{m}$}
In the case of $\stairPP{n}{m}$, we set $\Gamma = (\mathbb{Z}^2, E)$, where $E := \{ (i,j) \to (i,j-1) \} \cup \{ (i,j) \to (i-1,j) \}$ and
let $a_i = (i,-i)$ and $b_j=(2j-n-1,-m-j)$ for $1 \leq i,j \leq n$. Then, $\stairPP{n}{m}$ is in bijection to $\mathcal{P}_{\Gamma}^{\text{NI}}(\mathbf{a},\mathbf{b})$.
For example, we consider 
$$\pi = \,\vcenter{\hbox{\young(6433,421,30,1)}} \in \stairPP{4}{6}.$$
This can be described by means of piles of cubes and is associated with the following non-intersecting paths:
\input{a.tikz}
Furthermore, due to the LGV lemma, $\stairPP{n}{m}$ is in sijection to $\mathcal{P}_{\Gamma}(\mathbf{a},\mathbf{b})$.

Next, we explain what we call \textit{the double LGV lemma technique}. 
Let $\mathbf{a}=(a_1,a_2,\ldots,a_n)$ and $\mathbf{b}=(b_1,b_2,\ldots,b_n)$ be vertices on a directed acyclic graph $\Gamma_1$, and $\mathbf{c}$ and $\mathbf{d}$ vertices on a directed acyclic graph $\Gamma_2$.
According to the bijective proof of the LGV lemma, we have sijections $\mathcal{P}^{\text{NI}}_{\Gamma_1}(\mathbf{a},\mathbf{b}) \sijto \mathcal{P}_{\Gamma_1}(\mathbf{a},\mathbf{b})$ and
$\mathcal{P}^{\text{NI}}_{\Gamma_2}(\mathbf{c},\mathbf{d}) \sijto \mathcal{P}_{\Gamma_2}(\mathbf{c},\mathbf{d})$. Furthermore, these sijections are compatible in many situations.
Therefore, by constructing a (good) sijection
$\mathcal{P}_{\Gamma_1}(\mathbf{a},\mathbf{b}) \sijto \mathcal{P}_{\Gamma_2}(\mathbf{c},\mathbf{d})$,
we can establish a property between combinatorial objects (e.g., plane partitions) associated with non-intersecting paths,
without explicitly handling the non-intersecting conditions.
This method is not entirely new; for example, it has been used in the bijective proof of the hook-content formula for the number of plane partitions \cite{RW}.
In the following this method will be used several times.

\section{Lattice Paths and a Further Transformation on the stair-PPs side}

\subsection{A First Lattice Path Configuration for stair-PPs and Transformation of $S$} \label{ssec::1-LP-stairPP}
Let $n \in \mathbb{Z}_{>0}$ and $m \in \mathbb{Z}_{\geq 0}$.
We define a set-valued statistic $\widetilde{S}$ on $\stairPP{n}{m}$ by $$\widetilde{S}(\pi) = \{ 1 \leq j \leq n \mid \pi_{1,j} \ne m \}.$$
The goal of Section~\ref{ssec::1-LP-stairPP} is to construct a compatible bijection $\bigl(\stairPP{n}{m}, S\bigr) \sijto \bigl(\stairPP{n}{m}, \widetilde{S}\bigr)$. %, together with a refinement.
Here, it is important that the definition of $\widetilde{S}$ is close to the definition of $S$ on $\eSPP{n}{m}$: $$S(\pi') = \{1 \leq j \leq n \mid \pi'_{1,j} \ne 2m \}.$$

We set $\Gamma = (\mathbb{Z}^2, E)$, where $E := \{ (i,j) \to (i,j-1) \} \cup \{ (i,j) \to (i-1,j) \}$, $a_i = (-i,-i)$ and $b_j=(n+1-2j,-m-j)$ for $1 \leq i,j \leq n$.
Then, as stated in the previous section, $\stairPP{n}{m}$ is in bijection to $\mathcal{P}_{\Gamma}^{\text{(NI)}}(\mathbf{a},\mathbf{b})$ and in sijection to $\mathcal{P}_{\Gamma}(\mathbf{a},\mathbf{b})$.
By the construction of the bijection $\stairPP{n}{m} \to \mathcal{P}_{\Gamma}^{\text{(NI)}}(\mathbf{a},\mathbf{b})$, 
the set-valued statistic $S$ on $\stairPP{n}{m}$ naturally extends to $\mathcal{P}_{\Gamma}^{\text{(NI)}}(\mathbf{a},\mathbf{b})$ and further to $\mathcal{P}_{\Gamma}(\mathbf{a},\mathbf{b})$ as follows:
\begin{itemize}
\item let $E_1 := \{ a_i-(0,1) \to a_i \mid 1 \leq i \leq n \}$,
\item $S(P) = E(P) \cap E_1$ for $P \in \supp{\mathcal{P}_{\Gamma}(\mathbf{a},\mathbf{b})}$, where $E(P)$ denotes the set of edges contained in $P$.
\end{itemize}
Similarly, $\widetilde{S}$ naturally extends to $\mathcal{P}_{\Gamma}(\mathbf{a},\mathbf{b})$ as follows:
\begin{itemize}
\item let $E_2 := \{ b_j \to b_j+(1,0) \mid 1 \leq j \leq n \}$,
\item $\widetilde{S}(P) = E(P) \cap E_2$ for $P \in \supp{\mathcal{P}_{\Gamma}(\mathbf{a},\mathbf{b})}$.
\end{itemize}
By definition, the bijection $\stairPP{n}{m} \to \mathcal{P}_{\Gamma}^{\text{(NI)}}(\mathbf{a},\mathbf{b})$ is compatible with $S$ and $\widetilde{S}$.
Furthermore, the sijection $\mathcal{P}_{\Gamma}^{\text{NI}}(\mathbf{a},\mathbf{b}) \sijto  \mathcal{P}_{\Gamma}(\mathbf{a},\mathbf{b})$ given by the LGV lemma is compatible with $S$ and $\widetilde{S}$,
since the sijection does not involve the edge set.
By the double LGV lemma method, what we have to do is construct a compatible sijection $(\mathcal{P}_{\Gamma}(\mathbf{a},\mathbf{b}), S) \sijto (\mathcal{P}_{\Gamma}(\mathbf{a},\mathbf{b}), \widetilde{S})$.
In fact, such a sijection is obtained by rotating each path by 180 degrees around the midpoint of its two endpoints.
This action does not change the endpoints of each path, and it associates edges in $E_1$ with edges in $E_2$.
For example,
\[
    \begin{tikzpicture}[scale=0.6, baseline=(current bounding box.center)]
        \fill[red] (1,-1) circle (2pt);
        \node[above] at (1,-1) {$a_1$};
        \fill[red] (2,-2) circle (2pt);
        \node[above] at (2,-2) {$a_2$};
        \fill[red] (3,-3) circle (2pt);
        \node[above] at (3,-3) {$a_3$};
        \fill[red] (4,-4) circle (2pt);
        \node[above] at (4,-4) {$a_4$};
        \fill[red] (-3,-7) circle (2pt);
        \node[below] at (-3,-7) {$b_1$};
        \draw[cyan, thick](-3.000000,-6.500000) circle (0.2 and 0.5) ;
        \fill[red] (-1,-8) circle (2pt);
        \node[below] at (-1,-8) {$b_2$};
        \draw[cyan, thick](-1.000000,-7.500000) circle (0.2 and 0.5) ;
        \fill[red] (1,-9) circle (2pt);
        \node[below] at (1,-9) {$b_3$};
        \draw[cyan, thick](1.000000,-8.500000) circle (0.2 and 0.5) ;
        \fill[red] (3,-10) circle (2pt);
        \node[below] at (3,-10) {$b_4$};
        \draw[cyan, thick](3.000000,-9.500000) circle (0.2 and 0.5) ;
        \node[above, cyan] at (4,-10) {$E_1$};
        \draw[blue, thick](1,-1)--(1,-2)--(0,-2)--(0,-4)--(-1,-4)--(-1,-5)--(-2,-5)--(-2,-7)--(-3,-7);
        \draw[orange, thick](2,-2)--(1,-2)--(1,-4)--(0,-4)--(0,-6)--(-1,-6)--(-1,-8);
        \draw[blue, thick](3,-3)--(3,-4)--(2,-4)--(2,-6)--(1,-6)--(1,-9);
        \draw[orange, thick](4,-4)--(4,-7)--(3,-7)--(3,-10);
    \end{tikzpicture}
    \hspace{1cm}
    \longrightarrow
    \hspace{1cm}
    \begin{tikzpicture}[scale=0.6, baseline=(current bounding box.center)]
        \fill[red] (1,-1) circle (2pt);
        \node[above] at (1,-1) {$a_1$};
        \draw[cyan, thick](1.000000,-1.500000) circle (0.2 and 0.5) ;
        \fill[red] (2,-2) circle (2pt);
        \node[above] at (2,-2) {$a_2$};
        \draw[cyan, thick](2.000000,-2.500000) circle (0.2 and 0.5) ;
        \fill[red] (3,-3) circle (2pt);
        \node[above] at (3,-3) {$a_3$};
        \draw[cyan, thick](3.000000,-3.500000) circle (0.2 and 0.5) ;
        \fill[red] (4,-4) circle (2pt);
        \node[above] at (4,-4) {$a_4$};
        \draw[cyan, thick](4.000000,-4.500000) circle (0.2 and 0.5) ;
        \node[above, cyan] at (5,-5) {$E_2$};
        \fill[red] (-3,-7) circle (2pt);
        \node[below] at (-3,-7) {$b_1$};
        \fill[red] (-1,-8) circle (2pt);
        \node[below] at (-1,-8) {$b_2$};
        \fill[red] (1,-9) circle (2pt);
        \node[below] at (1,-9) {$b_3$};
        \fill[red] (3,-10) circle (2pt);
        \node[below] at (3,-10) {$b_4$};
        \draw[blue, thick](1,-1)--(0,-1)--(0,-3)--(-1,-3)--(-1,-4)--(-2,-4)--(-2,-6)--(-3,-6)--(-3,-7);
        \draw[orange, thick](2,-2)--(2,-4)--(1,-4)--(1,-6)--(0,-6)--(0,-8)--(-1,-8);
        \draw[blue, thick](3,-3)--(3,-6)--(2,-6)--(2,-8)--(1,-8)--(1,-9);
        \draw[orange, thick](4,-4)--(4,-7)--(3,-7)--(3,-10);
    \end{tikzpicture}
\]

For a further example, please see Figure \ref{fig::4} in Section~8.

\subsection{A second Lattice Path Configuration for stair-PPs} \label{ssec::2-LP-stairPP}
Next, we explain one more lattice path configuration of $\stairPP{n}{m}$.
In this construction, we cut the cubes along the $xy$-plane, instead of the $xz$-plane as in the previous construction.

Let $E := \{ (i,j) \to (i,j+1) \} \cup \{ (i,j) \to (i+1,j) \}$ and set $\Gamma = (\mathbb{Z}^2,E)$.
Define $E' := \{ e \in E \mid e \subset \{ (x,y) \in \mathbb{R}^2 \mid x-y \geq 2m+1 \} \}$ and $\Gamma' = (\mathbb{Z}^2, E')$.
%We set $\Gamma' = (\mathbb{Z}^2, E')$, where $E' := \{ e \in E \mid e \subset \{(x,y) \in \mathbb{R}^2 \mid x-y \geq 2m+1 \} \}$ and $E := \{ (i,j) \to (i,j+1) \} \cup \{ (i,j) \to (i+1,j) \}$.
Let $a_i = (i,-i)$,  $b_i = (i+n,-i+n)$, $\mathbf{a} = (a_1,a_2,\ldots,a_m)$ and $\mathbf{b} = (b_1,b_2,\ldots,b_m)$,
then $\stairPP{n}{m}$ is in bijection to $\mathcal{P}_{\Gamma'}^{\text{NI}}(\mathbf{a},\mathbf{b})$.
For example, we consider 
$$\pi = \,\vcenter{\hbox{\young(6433,421,30,1)}} \in \stairPP{4}{6}.$$
This is described as piles of cubes as follows, and is associated with the following non-intersecting paths:
\input{c.tikz}
By applying the LGV lemma, this set of non-intersecting paths is in sijection to $\mathcal{P}_{\Gamma'}(\mathbf{a},\mathbf{b})$.
Under this sijection, the set-valued statistic $\widetilde{S}$ naturally extends to $\mathcal{P}_{\Gamma'}(\mathbf{a},\mathbf{b}) (\supseteq \mathcal{P}^\text{(NI)}_{\Gamma'}(\mathbf{a},\mathbf{b}))$ and further to $\mathcal{P}_{\Gamma}(\mathbf{a},\mathbf{b})$ as:
\begin{itemize}
\item let $E_3 := \{ e \in E \mid e \subset \{ (x,y) \in \mathbb{R}^2 \mid y \geq n-1 \} \}$,
\item $\widetilde{S}(P) = E(P) \cap E_3$ for $P \in \supp{\mathcal{P}_{\Gamma}(\mathbf{a},\mathbf{b})}$.
%\item {\color{red} transpose shi nai to Construction 5.2. de kowareru <- $a$ wo reflect shita hou ga yoi kamo?}
\end{itemize}
To better organize the signed set $\mathcal{P}_{\Gamma'}(\mathbf{a},\mathbf{b})$, we prepare the following construction as a lemma.

\begin{construction} \label{construction::mirror}
%Let $\Gamma = (\mathbb{Z}^2, E)$. %, where
%\[
%  E := \{ (i,j) \to (i,j-1) \} \cup \{ (i,j) \to (i+1,j) \}.
%\]
%and let $\Gamma' = (\mathbb{Z}^2, E')$, where
%\[
%  E' := \{ e \in E \mid e \subset \{(x,y) \in \mathbb{R}^2 \mid x-y \geq 2m+1 \} \}.
%\]
Let $a=(a_x,a_y), b=(b_x,b_y) \in \mathbb{Z}^2$ satisfy $a_x-a_y \geq 2m+1$ and $b_x-b_y \geq 2m+1$.
Then we construct a sijection
\[
	\mathcal{P}_{\Gamma'}(a \to b) \sijto \mathcal{P}_{\Gamma}(a \to b) \setminus \mathcal{P}_{\Gamma}(a' \to b),
\]
where $a' = (a_y+2m+2,\, a_x-2m-2)$ is the reflection of $a$ across the line $x-y = 2m+2$.

Constructing this sijection is equivalent to constructing a bijection
\[
	\mathcal{P}_{\Gamma}(a \to b) \setminus \mathcal{P}_{\Gamma'}(a \to b)
	\;\to\; \mathcal{P}_{\Gamma}(a' \to b).
\]
Let $P \in \mathcal{P}_{\Gamma}(a \to b)^+ \setminus \mathcal{P}_{\Gamma'}(a \to b)^+$.  
Then $P$ intersects the line $x-y=2m+2$.  
By reflecting the part of $P$ before the last such intersection—that is, the intersection with maximum $x+y$—we obtain the desired bijection.
For example,
\[
	\begin{tikzpicture}[scale=0.6, baseline=(current bounding box.center)]
        \draw[orange, dashed](6,-7)--(11,-2) node[pos=0, left] {$x-y=2m+1$};
        \draw[orange, very thick](6,-8)--(12,-2) node[pos=0, left] {$x-y=2m+2$};
        \draw[blue, thick](6,-6)--(9,-6)--(9,-3)--(10,-3)--(10,-2);
        \fill[red] (6,-6) circle (2pt);
        \node[left] at (6,-6) {$a$};
        \fill[red] (10,-2) circle (2pt);
        \node[above] at (10,-2) {$b$};
        \fill[green] (9,-5) circle (2pt);
	\end{tikzpicture}
    \hspace{1cm}
    \longrightarrow
    \hspace{1cm}
	\begin{tikzpicture}[scale=0.6, baseline=(current bounding box.center)]
        \draw[orange, dashed](6,-7)--(11,-2) node[pos=0, left] {$x-y=2m+1$};
        \draw[orange, very thick](6,-8)--(12,-2) node[pos=0, left] {$x-y=2m+2$};
        \draw[blue, dashed](6,-6)--(9,-6)--(9,-3)--(10,-3)--(10,-2);
        \draw[blue, thick](8,-8)--(8,-5)--(9,-5)--(9,-3)--(10,-3)--(10,-2);
        \fill[red] (6,-6) circle (2pt);
        \node[left] at (6,-6) {$a$};
        \fill[red] (10,-2) circle (2pt);
        \node[above] at (10,-2) {$b$};
        \fill[green] (9,-5) circle (2pt);
        \fill[red] (8,-8) circle (2pt);
        \node[below] at (8,-8) {$a'$};
	\end{tikzpicture}
\]
\end{construction}
\begin{corollary}
In the same setting as Construction~\ref{construction::mirror}, we set $a = a_i$ and $b=b_j$. Then, the bijection 
\[
	\mathcal{P}_{\Gamma'}(a_i \to b_j) \sijto \mathcal{P}_{\Gamma}(a_i \to b_j) \setminus \mathcal{P}_{\Gamma}(a_i' \to b_j),
\]
constructed in Construction~\ref{construction::mirror} is compatible with the set-valued statistic $\widetilde{S}$
because any path from $a_i$ to $b_j$ cannot intersect the line $x-y=2m+2$ once it has entered the region $\{ (x,y) \mid y \ge n-1 \}$.
%because a path from $a_i$ to $b_j$ does not intersect the line $x-y=2m+2$ after the area $\{ (x,y) \in \mathbb{R}^2 \mid y \geq n-1 \}$.
\end{corollary}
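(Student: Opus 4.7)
The plan is to verify that the reflection in Construction~\ref{construction::mirror} preserves the statistic $\widetilde{S}(P) = E(P) \cap E_3$ edge for edge. Since the bijection acts as the identity on paths that already lie in $\mathcal{P}_{\Gamma'}(a_i \to b_j)$, compatibility is automatic on that part. On the remaining paths $P \in \mathcal{P}_{\Gamma}(a_i \to b_j)^+ \setminus \mathcal{P}_{\Gamma'}(a_i \to b_j)^+$, the map replaces the initial segment from $a_i$ to the last intersection $c$ of $P$ with the line $x-y=2m+2$ by the reflected segment from $a_i'$ to $c$. Since the tail from $c$ to $b_j$ is shared, it suffices to show that neither the original initial segment nor its reflection contributes any edge of $E_3$.

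The main step is a coordinate bound on the pivot point $c$. Because $b_j = (j+n,\,n-j)$ with $1 \le j \le m$, every vertex visited by a monotone up/right path terminating at $b_j$ satisfies $x \le j+n \le m+n$; in particular $c_x \le m+n$. Combined with the defining relation $c_x = c_y + (2m+2)$, this gives
\[
	c_y \;\le\; (m+n) - (2m+2) \;=\; n-m-2 \;\le\; n-2.
\]
The $y$-coordinate is non-decreasing along any monotone path, so every vertex weakly preceding $c$---on $P$ or on its reflected counterpart $P'$ alike---lies in the strip $\{y \le n-2\}$. Consequently no edge appearing before $c$ has both endpoints in $\{y \ge n-1\}$, and hence no such edge belongs to $E_3$.

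Putting these observations together, $E(P) \cap E_3 = E(P') \cap E_3$ for every $P$ and its image $P'$, which is precisely the compatibility of the bijection with $\widetilde{S}$. I do not anticipate a substantive obstacle beyond the coordinate inequality for $c_y$; once the alignment of $2m+2$ with the locations of the endpoints $b_j$ is pinned down, the argument is uniform in $i$ and $j$ and requires no further case analysis.
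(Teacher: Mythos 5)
Your proposal is correct and follows essentially the same route as the paper: the paper's entire justification is the observation that the pivot on the line $x-y=2m+2$ cannot lie in the region $y\ge n-1$, and your coordinate bound $c_y \le (m+n)-(2m+2) \le n-2$ (using that $x$ is non-decreasing toward $b_j=(j+n,\,n-j)$) is precisely the verification of that claim, together with the standard remark that reflection across a slope-one line preserves monotonicity of the initial segment. No gap; you have merely made explicit the computation the paper leaves implicit.
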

Due to this construction we have
\begin{align*}
	\mathcal{P}_{\Gamma'}(\mathbf{a},\mathbf{b}) =& \bigsqcup_{\sigma \in S_m} \sgn{\sigma} \prod_{i=1}^m \mathcal{P}_{\Gamma'}(a_{\sigma_i} \to b_{i}) \\
	\sijto& \bigsqcup_{\sigma \in S_m} \sgn{\sigma} \prod_{i=1}^m \left\{ \mathcal{P}_{\Gamma}(a_{\sigma_i} \to b_{i}) \setminus \mathcal{P}_{\Gamma}(a_{\sigma_i}' \to b_{i}) \right\} \\
	=& \bigsqcup_{\sigma \in S_m} \sgn{\sigma} \bigsqcup_{\mathbf{e} \in \{0,1\}^m} (-1)^{\lvert \mathbf{e} \rvert} \prod_{i=1}^m \left\{ \mathcal{P}_{\Gamma}(a_{\sigma_i}^{(e_i)} \to b_{i}) \right\},
\end{align*}
where
\begin{itemize}
\item $a_i' = (-i+2m+2, i-2m-2)$ is the point symmetric to $a_i$ with respect to the line $x-y=2m+2$,
\item $a_i^{(0)} = a_i$, $a_i^{(1)} = a_i'$,
\item $\lvert \mathbf{e} \rvert = \sum_{i=1}^m e_i$ for $\mathbf{e} = (e_1,e_2,\ldots,e_m) \in \{0,1\}^m$.
\end{itemize}
Let $\mathcal{C}(u,v) := \mathcal{P}_{\Gamma}\bigl((0,0) \to (u-v,v)\bigr)$ for integers $u,v$.
It is well known that $\# \mathcal{C}(u,v) = \binom{u}{v}$,
where we set $\binom{u}{v} = 0$ if the condition $0 \le v \le u$ is not satisfied.
We write $\mathcal{P}_{\Gamma}( (a_x,a_y) \to (b_x,b_y) ) = \mathcal{C}(b_x-a_x+b_y-a_y, b_y-a_y)$, since we have a canonical bijection between their positive parts and their negative parts are empty.
Summarizing Section~\ref{ssec::2-LP-stairPP}: we have constructed a sijection,
\begin{multline} \label{sij::stair-I}
	\stairPP{n}{m} \to \mathcal{P}_{\Gamma'}^\text{(NI)}(\mathbf{a},\mathbf{b}) \sijto \mathcal{P}_{\Gamma'}(\mathbf{a},\mathbf{b}) \\
	\sijto \bigsqcup_{\sigma \in S_m} \sgn{\sigma} \bigsqcup_{\mathbf{e} \in \{0,1\}^m} (-1)^{\lvert \mathbf{e} \rvert} \prod_{i=1}^m \left [ \begin{cases}\mathcal{C}(2n,n-i+\sigma_i) & e_i = 0 \\ \mathcal{C}(2n,n-i-\sigma_i+2m+2)& e_i = 1 \end{cases} \right ].
\end{multline}
In order to ensure the desired compatibility, we extend the set-valued statistic $\widetilde{S}$ as follows:
\begin{quote}
%For $\displaystyle (((P_1,P_2,\ldots,P_m),\mathbf{e}),\sigma) \in \supp{\bigsqcup_{\sigma \in S_m} \sgn{\sigma} \bigsqcup_{\mathbf{e} \in \{0,1\}^m} (-1)^{\lvert \mathbf{e} \rvert} \prod_{i=1}^m \left [ \begin{cases}\mathcal{C}(2n,n-i+\sigma_i) & e_i = 0 \\ \mathcal{C}(2n,n-i-\sigma_i+2m+2)& e_i = 1 \end{cases} \right ]}$,
For $\displaystyle (((P_1,P_2,\ldots,P_m),\mathbf{e}),\sigma) \in \supp{\text{the RHS of (\ref{sij::stair-I})}}$,
the value of $\widetilde{S}$ is $$\{ e \in E(P_1) \mid  e \subset \{ (x,y) \in \mathbb{R}^2 \mid y \geq n-1+\sigma_1 \} \}. $$
In particular, $\#\widetilde{S}$ equals the number of consecutive horizontal edges at the end of the path $P_1$.
\end{quote}

\subsection{The Definition of $I_m$}
We introduce a new signed set of combinatorial objects $I_m \left[ \begin{gathered}a_1,a_2, \ldots, a_m \\ b_1,b_2,\ldots,b_m \end{gathered} \right]$.
\begin{definition}
Let $m \in \mathbb{Z}_{>0}$ and $a_1,a_2,\ldots,a_m, b_1,b_2, \ldots, b_m \in \mathbb{Z}$.
Then, $I_m \left[ \begin{gathered}a_1,a_2, \ldots, a_m \\ b_1,b_2,\ldots,b_m \end{gathered} \right]$ is defined as follows.
First, the support is
\[
	\supp{I_m \left[ \begin{gathered}a_1,a_2, \ldots, a_m \\ b_1,b_2,\ldots,b_m \end{gathered} \right]} = S_m \times \{0,1\}^m.
\]
Then, a sign is defined by
\[
\begin{tikzcd}
\textrm{sgn} \arrow[r, phantom, "\colon"]&
S_m \times \{0,1\}^m \arrow[r] \arrow[d, phantom, "\rotatebox{90}{$\in$}"] 
  & \{+1,-1\} \arrow[d, phantom, "\rotatebox{90}{$\in$}"]  \\ &
(\sigma, (t_1,t_2,\ldots,t_m)) \arrow[r, mapsto] & \sgn{\sigma} \cdot \prod_{i=1}^m (-1)^{t_i},
\end{tikzcd}
\]
where $\sgn{\sigma}$ for $\sigma \in S_m$ is the sign of the permutation involved.
\end{definition}

In addition, we define a statistic $\eta_i$, for $1 \leq i \leq m$, on $I_m$ as follow:
\[
	\eta_i \colon (\sigma, (t_1,t_2,\ldots,t_m)) \mapsto a_i - (-1)^{t_i}b_{\sigma_i}.
\]
Then $\lvert \eta_i \rvert \colon (\sigma, (t_1,t_2,\ldots,t_m)) \mapsto \lvert a_i - (-1)^{t_i}b_{\sigma_i} \rvert$ is also a statistic on $I_m$.
Using these statistics, the sijection (\ref{sij::stair-I}) can be rewritten as follows:
\begin{align} \label{sij::stair-I-2-1}
  \stairPP{n}{m}
  \sijto&
  \bigsqcup_{\alpha \in I_m\Bigl[\begin{smallmatrix}
    m, m-1, \ldots, 1 \\
    m, m-1, \ldots, 1
  \end{smallmatrix}\Bigr]}
  \prod_{i=1}^m \mathcal{C}(2n,\, n+\eta_i(\alpha)) \\
  =&  \label{sij::stair-I-2-2}
  \bigsqcup_{\alpha \in I_m\Bigl[\begin{smallmatrix}
    m, m-1, \ldots, 1 \\
    m, m-1, \ldots, 1
  \end{smallmatrix}\Bigr]}
  \prod_{i=1}^m \mathcal{C}(2n,\, n+\lvert \eta_i \rvert(\alpha)),
\end{align}
where the second equality means a disjoint union of identities on $\mathcal{C}(2n,\, n+\eta_i(\alpha))$ and/or canonical sijections
\[
	\mathcal{C}(2n, n+w) = \mathcal{C}(2n, n-w),
\]
obtained by transposing the paths, i.e., reflecting the paths with respect to the line $x=y$.
We now extend the set-valued statistic $\widetilde{S}$ as follows:
\begin{quote}
For $((P_1,P_2,\ldots,P_m),\alpha) \in \supp{\text{the RHS of (\ref{sij::stair-I-2-1}) or (\ref{sij::stair-I-2-2})}}$, 
the value of $\widetilde{S}$ is $$ \{ e \in E(P_1) \mid e \subset \{ (x,y) \in \mathbb{R}^2 \mid y \geq n+\lvert\eta_1\rvert(\alpha) \} \}.$$
  \end{quote}
Then, since $\eta_1(\alpha) \geq 0$ for $\alpha \in I \left[ \begin{gathered} m,m-1,\ldots,1 \\ m,m-1,\ldots,1 \end{gathered} \right]$,
the constructed sijection is compatible with $\widetilde{S}$. Combining it with the sijection constructed in Section~\ref{ssec::1-LP-stairPP}, we obtain a compatible sijection
\[
	(\stairPP{n}{m}, S) \sijto \left(
  \bigsqcup_{\alpha \in I_m\Bigl[\begin{smallmatrix}
    m, m-1, \ldots, 1 \\
    m, m-1, \ldots, 1
  \end{smallmatrix}\Bigr]}
  \prod_{i=1}^m \mathcal{C}(2n,\, n+\lvert \eta_i \rvert(\alpha)), \,\,\widetilde{S}  \right).
\]

\section{Lattice Paths and a Further Transformation on the even SPPs side}
In this section, we organize a lattice paths configuration for $\eSPP{n}{m}$.
\subsection{A Lattice Path Configuration for even SPPs}
First, we explain a lattice path configuration for $\SPP{n}{2m}$, which includes $\eSPP{n}{m}$.
Let $a_i = (i,-i+n)$, $b_i = (i+n, -i+n)$. Then, $\SPP{n}{2m}$ is in bijection to
\[
	\bigsqcup_{\tau \in \binom{n+2m}{2m}} \mathcal{P}^{\text{(NI)}}_{\Gamma}(\mathbf{a}_\tau,\mathbf{b}),
\]
where $\tau \in \binom{n+2m}{2m}$ means that $1 \leq \tau_1 < \tau_2 < \cdots < \tau_{2m} \leq n+2m$, and let $\mathbf{a}_\tau = (a_{\tau_1}, a_{\tau_2}, \ldots, a_{\tau_{2m}})$ and $\mathbf{b} = (b_1,b_2,\ldots, b_{2m})$.
For example, let us consider
$$\pi = \,\vcenter{\hbox{\young(6433,:421,::20,:::0)}} \in \eSPP{4}{3} \subset \stairPP{4}{6}.$$
This is described as piles of cubes as below and is associated with the following non-intersecting paths:
\input{d.tikz}
Under this bijection, $T \in \SPP{n}{2m}$ is even if and only if the corresponding $\tau$ satisfies $\tau_{2i} = \tau_{2i-1}+1$, for all $i=1,2,\ldots,m$. Therefore, $\eSPP{n}{m}$ is in bijection to
\[
	\bigsqcup_{\tau \in \binom{n+2m}{2m} \colon \tau_{2i} = \tau_{2i-1}+1 \text{ for $i=1,2,\ldots,m$ }} \mathcal{P}^{\text{(NI)}}_{\Gamma}(\mathbf{a}_\tau,\mathbf{b}).
\]
In fact, we can extend the range of $\tau$ as follows:
\[
	\bigsqcup_{\tau \in \{1,2,\ldots,n+2m\}^{2m} \colon \tau_1 < \tau_3 < \cdots < \tau_{2m-1} \text{ and }\tau_{2i} = \tau_{2i-1}+1 \text{ for $i=1,2,\ldots,m$ }} \mathcal{P}^{\text{(NI)}}_{\Gamma}(\mathbf{a}_\tau,\mathbf{b}),
\]
since if $\tau \not\in \binom{n+2m}{2m}$ then $\mathcal{P}^{\text{(NI)}}_{\Gamma}(\mathbf{a}_\tau,\mathbf{b})$ is empty.
In addition, by the LGV lemma, this is in sijection to 
\begin{equation}
	\bigsqcup_{\tau \in \{1,2,\ldots,n+2m\}^{2m} \colon \tau_1 < \tau_3 < \cdots < \tau_{2m-1} \text{ and }\tau_{2i} = \tau_{2i-1}+1 \text{ for $i=1,2,\ldots,m$ }} \mathcal{P}_{\Gamma}(\mathbf{a}_\tau,\mathbf{b}). \label{sset::6-1-1}
\end{equation}
We now prepare some notations. Let
\begin{gather*}
S_{2m}' = \{\sigma \in S_{2m} \colon \min(\sigma_1, \sigma_2) < \min(\sigma_3,\sigma_4) < \cdots < \min(\sigma_{2m-1}, \sigma_{2m}) \},\\
T_{n,m} = \{\widetilde\tau \in \{1,2,\ldots,n+2m\}^m \colon \text{ elements of $\widetilde\tau$ are pairwise distinct.} \}
\end{gather*}
Then we have the following bijection.
\begin{construction} \label{bij::TS-ST}
We construct a bijection $\varphi_\text{\ref{bij::TS-ST}} \colon \tbinom{n+2m}{m} \times S_{2m} \to T_{n,m} \times S_{2m}'$.
In fact, both sides are in bijection to $(T_{n,m} \times S_{2m}) / S_m$, where $S_m$ acts as follows:
\begin{quote}
	Let $\rho \in S_m$ and $(\widetilde\tau, \sigma) \in (T_{n,m} \times S_{2m})$. Then,
	\begin{equation} \label{def::action}
		\rho \cdot (\widetilde\tau, \sigma) = ((\widetilde\tau_{\rho_1},\widetilde\tau_{\rho_2},\ldots,\widetilde\tau_{\rho_m}), (\sigma_{2\rho_1-1}, \sigma_{2\rho_1}, \sigma_{2\rho_2-1}, \sigma_{2\rho_2}, \ldots, \sigma_{2\rho_m-1}, \sigma_{2\rho_m})).
	\end{equation}
	In particular, under this action $\sgn{\sigma}$ is preserved.
\end{quote}
Note that in the LHS, we choose representatives such that $\widetilde\tau_1 < \widetilde\tau_2 < \cdots < \widetilde\tau_m$,
and that in the RHS we choose representatives such that $\min(\sigma_1, \sigma_2) < \min(\sigma_3,\sigma_4) < \cdots < \min(\sigma_{2m-1}, \sigma_{2m})$.
\end{construction}

We return to (\ref{sset::6-1-1}).
By definition, the signed set (\ref{sset::6-1-1}) is in sijection to
\begin{multline} \label{eq::6-1-1}
	\bigsqcup_{\tau \in \{1,2,\ldots,n+2m\}^{2m} \colon \tau_1 < \tau_3 < \cdots < \tau_{2m-1} \text{ and }\tau_{2i} = \tau_{2i-1}+1 \text{ for $i=1,2,\ldots,m$ }} \bigsqcup_{\sigma \in S_{2m}} \sgn{\sigma}\prod_{i=1}^{2m}\mathcal{P}_{\Gamma}(a_{\tau_{i}}, b_{\sigma_i}) \\
	= \bigsqcup_{\widetilde\tau \in \tbinom{n+2m}{m}} \bigsqcup_{\sigma \in S_{2m}} \sgn{\sigma} \prod_{i=1}^m \left( \mathcal{P}_{\Gamma}(a_{\widetilde\tau_{i}}, b_{\sigma_{2i-1}}) \times \mathcal{P}_{\Gamma}(a_{\widetilde\tau_{i}+1}, b_{\sigma_{2i}}) \right). 
\end{multline}
Using the bijection $\varphi_\text{\ref{bij::TS-ST}}$, the RHS of (\ref{eq::6-1-1}) is in sijection to 
\[
	\bigsqcup_{\widetilde\tau \in T_{n,m}} \bigsqcup_{\sigma \in S_{2m}'} \sgn{\sigma} \prod_{i=1}^m \left( \mathcal{P}_{\Gamma}(a_{\widetilde\tau_{i}}, b_{\sigma_{2i-1}}) \times \mathcal{P}_{\Gamma}(a_{\widetilde\tau_{i}+1}, b_{\sigma_{2i}}) \right). 
\]
Furthermore, this is in sijection to 
\begin{multline} \label{sset::6-1-2}
	\bigsqcup_{\widetilde\tau \in \{1,2,\ldots,n+2m\}^m } \bigsqcup_{\sigma \in S_{2m}'} \sgn{\sigma} \prod_{i=1}^m \left( \mathcal{P}_{\Gamma}(a_{\widetilde\tau_{i}}, b_{\sigma_{2i-1}}) \times \mathcal{P}_{\Gamma}(a_{\widetilde\tau_{i}+1}, b_{\sigma_{2i}}) \right) \\
	=  \bigsqcup_{\sigma \in S_{2m}'} \sgn{\sigma} \prod_{i=1}^m \bigsqcup_{\widetilde\tau_i \in \{1,2,\ldots,n+2m\} } \left( \mathcal{P}_{\Gamma}(a_{\widetilde\tau_{i}}, b_{\sigma_{2i-1}}) \times \mathcal{P}_{\Gamma}(a_{\widetilde\tau_{i}+1}, b_{\sigma_{2i}}) \right)
\end{multline}
since we have a sijection
\begin{align*}
	\bigsqcup_{\widetilde\tau \in \{1,2,\ldots,n+2m\}^m \setminus T_{n,m} } \bigsqcup_{\sigma \in S_{2m}'} \sgn{\sigma} \prod_{i=1}^m \left( \mathcal{P}_{\Gamma}(a_{\widetilde\tau_{i}}, b_{\sigma_{2i-1}}) \times \mathcal{P}_{\Gamma}(a_{\widetilde\tau_{i}+1}, b_{\sigma_{2i}}) \right)
	\sijto (\emptyset,\emptyset),
\end{align*}
where we associate $((((P_{1,1},P_{1,2}),(P_{2,1},P_{2,2}),\ldots,(P_{m,1},P_{m,2})),\sigma),\widetilde\tau)$
with the element obtained by swapping $\sigma_{2j}$ and $P_{j,2}$ with $\sigma_{2k}$ and $P_{k,2}$, respectively, where $(j,k)$ is the index pair such that $\widetilde\tau_j = \widetilde\tau_k$ with minimum $(\widetilde\tau_j,\sigma_{2j-1},\sigma_{2k-1})$, in lexicographical order.
Note that this part is a bit technical. However, we essentially use the fact that when $\tau$ has multiple values, there are no non-intersecting paths (namely, $\mathcal{P}_\Gamma^\text{(NI)}(\mathbf{a}_\tau, \mathbf{b})$ is empty) and therefore lattice paths for such $\tau$ cancel among them, according to the LGV lemma.

Let $b'_i = (i,-i)$. Then, we have
\begin{align*}
	\mathcal{P}_{\Gamma}(a_{\widetilde\tau_{i}}, b_{\sigma_{2i-1}}) \times \mathcal{P}_{\Gamma}(a_{\widetilde\tau_{i}+1}, b_{\sigma_{2i}}) 
	&= \mathcal{P}_{\Gamma}(a_{\widetilde\tau_{i}}, b_{\sigma_{2i-1}}) \times \mathcal{P}_{\Gamma}(b'_{\sigma_{2i}}, a_{\widetilde\tau_{i}+1}) & (\text{reflection}) \\
	&= \mathcal{P}_{\Gamma}(a_{\widetilde\tau_{i}}, b_{\sigma_{2i-1}}) \times \mathcal{P}_{\Gamma}(b'_{\sigma_{2i}-1}, a_{\widetilde\tau_{i}})  & (\text{translation})\\
	&= \mathcal{P}_{\Gamma}(b'_{\sigma_{2i}-1}, a_{\widetilde\tau_{i}}) \times \mathcal{P}_{\Gamma}(a_{\widetilde\tau_{i}}, b_{\sigma_{2i-1}}), &
\end{align*}
by reflecting the paths across the line $x+y=n$ and translating the paths by $(-1,1)$.
For example, please see Figure \ref{fig::5} in Section~8.
Now, since $\widetilde{\tau}_{i}$ runs over all integers in $\{1,2,\ldots,n+2m\}$,
\begin{align*}
\bigsqcup_{\widetilde\tau_i \in \{1,2,\ldots,n+2m\}} \left( \mathcal{P}_{\Gamma}(a_{\widetilde\tau_{i}}, b_{\sigma_{2i-1}}) \times \mathcal{P}_{\Gamma}(a_{\widetilde\tau_{i}+1}, b_{\sigma_{2i}}) \right)
 &= \bigsqcup_{\widetilde\tau_i \in \{1,2,\ldots,n+2m\}} \left( \mathcal{P}_{\Gamma}(b'_{\sigma_{2i}-1}, a_{\widetilde\tau_{i}}) \times \mathcal{P}_{\Gamma}(a_{\widetilde\tau_{i}}, b_{\sigma_{2i-1}}) \right) \\
 &= \mathcal{P}_{\Gamma}(b'_{\sigma_{2i}-1}, b_{\sigma_{2i-1}}) \\
 &= \mathcal{C}(2n,n-1+\sigma_{2i}-\sigma_{2i-1}).
\end{align*}
As a result, we obtain the sijection
\begin{equation}
	\eSPP{n}{m} \sijto \bigsqcup_{\sigma \in S'_{2m}} \sgn{\sigma} \prod_{i=1}^m \mathcal{C}(2n,n-1+\sigma_{2i}-\sigma_{2i-1}). \label{sij::6-1-1}
\end{equation}
By the construction, the statistic $S$ naturally extends to the RHS of (\ref{sij::6-1-1}) as:
\begin{quote}
	For $((P_1,P_2,\ldots,P_m),\sigma) \in (\text{the RHS of (\ref{sij::6-1-1})})$, the value of $S$ is
	\[
		\begin{cases}
			\{ e \in E(P_1) \mid e \subset \{(x,y) \in \mathbb{R}^2 \mid y = n-1+\sigma_2-\sigma_1\} \}, & (\sigma_1 = 1) \\
			\{ e \in E(P_1) \mid e \subset \{(x,y) \in \mathbb{R}^2 \mid x=0\} \}. & (\sigma_2=1)
		\end{cases}
	\]
	Note that, by the definition of $S_{2m}'$, we have $\sigma_1=1$ or $\sigma_2=1$.
\end{quote}
Note also that, except for the step where we apply the LGV lemma, this sijection consists of translations, reflections and concatenations, which essentially do not involve the value of $S$.
In addition, the sijection derived from the LGV lemma does not change the value of $S$ since edges on the line $y=n-1$ can be used only for a path to $b_1$.

\subsection{The Definition of $J_m$}
We introduce yet another signed set of combinatorial objects $J_m \left< x_1,x_2,\ldots,x_{2m} \right>$.

\begin{definition}
Let $m \in \mathbb{Z}_{>0}$ and $x_1,x_2,\ldots,x_{2m} \in \mathbb{Z}$.
Then, $J_m \left< x_1,x_2,\ldots,x_{2m} \right>$ is defined as follows.
First, its support is
\begin{align*}
	\supp{J_m \left< x_1,x_2,\ldots,x_{2m} \right>} &= S_{2m}' \\
	&= \{\sigma \in S_{2m} \mid \min(\sigma_1, \sigma_2) < \min(\sigma_3,\sigma_4) < \cdots < \min(\sigma_{2m-1}, \sigma_{2m}) \},
\end{align*}
and the sign is the same as $\sgn{\sigma}$ for $\sigma \in S_{2m}$, the sign of permutations.
\end{definition}

In addition, we define a statistic $\eta_i$, for $1 \leq i \leq m$, on $J_m$ as:
\[
	\eta_i \colon \sigma \mapsto x_{\sigma_{2i}}-x_{\sigma_{2i-1}}-1.
\]
Then, $\lvert \eta_i \rvert \colon \sigma \mapsto \lvert x_{\sigma_{2i}}-x_{\sigma_{2i-1}}-1 \rvert$ is also a statistic on $J_m$.
Using these statistics, the sijection (\ref{sij::6-1-1}) can be rewritten as follows:
\begin{align}
	\eSPP{n}{m} \sijto& \bigsqcup_{\sigma \in J_m\left<1,2,\ldots,2m\right>} \prod_{i=1}^m \mathcal{C}(2n,n+\eta_i(\sigma)) \notag \\
	=& \bigsqcup_{\sigma \in J_m\left<1,2,\ldots,2m\right>} \prod_{i=1}^m \mathcal{C}(2n,n+\lvert\eta_i\rvert(\sigma)) \label{sij::6-2}
\end{align}
where the second equality means a disjoint union of identities on $\mathcal{C}(2n,\, n+\eta_i(\sigma))$ and/or canonical sijections
\[
	\mathcal{C}(2n, n+w) = \mathcal{C}(2n, n-w),
\]
obtained by transposing the paths, i.e., reflecting the paths with respect to the line $x=y$.
Furthermore, we rotate the paths when $\eta_i < 0$, in order to ensure the desired compatibility with the following extended statistic:
\begin{quote}
For $((P_1,P_2,\ldots,P_m),\sigma) \in \supp{\text{the RHS of (\ref{sij::6-2})}}$, 
we set the value of $S$ to be $$ \{ e \in E(P_1) \mid e \subset \{ (x,y) \in \mathbb{R}^2 \mid y \geq n+\lvert\eta_1\rvert(\alpha) \} \}.$$
  \end{quote}
For $\sigma \in J_m\left<1,2,\ldots,2m\right>$, we have $\eta_1(\sigma) \geq 0$ if $\sigma_1=1$, and $\eta_1(\sigma) < 0$ if $\sigma_2=1$.
Therefore, the sijection (\ref{sij::6-2}) is compatible with $S$. As a result, we obtain a compatible sijection
\[
	(\eSPP{n}{m}, S) \sijto \left( \bigsqcup_{\sigma \in J_m\left<1,2,\ldots,2m\right>} \prod_{i=1}^m \mathcal{C}(2n,n+\lvert\eta_i\rvert(\sigma)),\,\, S  \right).
\]

\section{The sijection between $I_m$ and $J_m$}
In this section, we construct a compatible sijection
\begin{equation} \label{sij::7-main}
	\left(
  \bigsqcup_{\alpha \in I_m\Bigl[\begin{smallmatrix}
    m, m-1, \ldots, 1 \\
    m, m-1, \ldots, 1
  \end{smallmatrix}\Bigr]}
  \prod_{i=1}^m \mathcal{C}(2n,\, n+\lvert \eta_i \rvert(\alpha)), \,\,\widetilde{S}  \right)
  \sijto
  \left( \bigsqcup_{\sigma \in J_m\left<1,2,\ldots,2m\right>} \prod_{i=1}^m \mathcal{C}(2n,n+\lvert\eta_i\rvert(\sigma)),\,\, S  \right),
\end{equation}
which completes the construction of the desired compatible bijection $(\stairPP{n}{m},S) \sijto (\eSPP{n}{m},S)$.

In fact, since both sides are very similar to each other, it suffices to construct a sijection
\[
	\varphi \colon I_m\Bigl[\begin{smallmatrix}
    m, m-1, \ldots, 1 \\
    m, m-1, \ldots, 1
  \end{smallmatrix}\Bigr]
  \sijto J_m\left<1,2,\ldots,2m\right>,
\]
and a family of permutations of $\{1,2,\ldots,m\}$ \begin{equation} \label{sigma7}\{ \sigma_s \}_{s \in \supp{ I_m\Bigl[\begin{smallmatrix}
    m, m-1, \ldots, 1 \\
    m, m-1, \ldots, 1
  \end{smallmatrix}\Bigr]} \sqcup \supp{J_m\left<1,2,\ldots,2m\right>}}\end{equation}
satisfying the following condition:
\begin{quote}
For all $s \in \supp{ I_m\Bigl[\begin{smallmatrix}
    m, m-1, \ldots, 1 \\
    m, m-1, \ldots, 1
  \end{smallmatrix}\Bigr]} \sqcup \supp{J_m\left<1,2,\ldots,2m\right>}$,% $\sigma_s(1) =1$, $\sigma_s \circ \sigma_{\varphi(s)} = \id$ and
  \[
  	\sigma_s(1)=1;\quad \sigma_s \circ \sigma_{\varphi(s)}=\id;
\quad |\eta_i|(s)=|\eta_{\sigma_s(i)}|(\varphi(s)).
  	%\lvert \eta_i \rvert(s) = \lvert \eta_{\sigma_s(i)} \rvert(\varphi(s)).
  \]
\end{quote}
Note that the condition implies that the sijection $\varphi$ is compatible with the statistic $\lvert \eta_1 \rvert$.
From these objects, we obtain the desired sijection as follows (c.f. Definition~\ref{def_sij_DIS}):
\begin{quote}
For $s = ((P_1,P_2,\ldots,P_m),\alpha) \in (\text{the LHS of (\ref{sij::7-main})})$, we associate
\[
	((P_{\sigma_s(1)}, P_{\sigma_s(2)}, \ldots, P_{\sigma_s(m)}), \varphi(s)).
\]
Similarly for the RHS.
\end{quote}
 
We extend this definition of $\lvert \eta_i \rvert $ to several different signed sets as follows:
\begin{itemize}
\item $\lvert \eta_1 \rvert (x) = \lvert x \rvert$ for $x \in \mathbb{Z}$.
\item $\lvert \eta_1 \rvert ((x,\alpha)) = \lvert x \rvert$, and for $i \geq 2$, $\lvert \eta_i \rvert((x,\alpha)) = \lvert \eta_{i-1} \rvert(\alpha)$, for $(x,\alpha) \in \mathbb{Z} \times I_{m-1} [\ast]$.
\item $\lvert \eta_1 \rvert ((x,\sigma)) = \lvert x \rvert $, and for $i \geq 2$, $\lvert \eta_i \rvert((x,\sigma)) = \lvert \eta_{i-1} \rvert(\sigma)$, for $(x,\sigma) \in \mathbb{Z} \times J_{m-1} \left<\ast\right>$.
\item $\lvert \eta_1 \rvert ((\rho,\sigma)) = \lvert \eta_1 \rvert(\rho)$,  and for $i \geq 2$, $\lvert \eta_i \rvert((\rho,\sigma)) = \lvert \eta_{i-1} \rvert(\sigma)$, for $(\rho,\sigma) \in J_{1} \left<\ast\right> \times J_{m-1} \left<\ast\right>$,
\end{itemize}
and similarly for $\eta_1, \eta_2,\ldots, \eta_m$.

\begin{construction} \label{construction::7-1}
Let $m \in \mathbb{Z}_{>0}$ and $x_1,x_2,\ldots, x_{2m}, d \in \mathbb{Z}$. Then, we have a sign-preserving bijection
\[
	J_m\left<x_1,x_2,\ldots,x_{2m}\right> = J_m\left<x_1+d, x_2+d, \ldots, x_{2m}+d\right>,
\]
which is compatible with $\eta_1, \eta_2,\ldots, \eta_m$ and therefore with $\lvert \eta_1 \rvert, \lvert \eta_2 \rvert, \ldots, \lvert \eta_m \rvert$.
In fact, it holds that $$\supp{J_m\left<x_1,x_2,\ldots,x_{2m}\right>} = \supp{J_m\left<x_1+d, x_2+d, \ldots, x_{2m}+d\right>} = S_{2m}',$$
and $\id_{S_{2m}'}$ is a compatible sign-preserving bijection between these two signed sets,
since $$\eta_i (\sigma) =  x_{\sigma_{2i}}-x_{\sigma_{2i-1}}-1 =  (x_{\sigma_{2i}}+d)-(x_{\sigma_{2i-1}}+d)-1.$$
\end{construction}

\begin{construction} \label{construction::7-2}
Let $m \in \mathbb{Z}_{>0}$ and $x_1,x_2,\ldots, x_{2m} \in \mathbb{Z}$.
Then, we have a sign-preserving bijection
\[
	J_m \left<x_1,x_2,\ldots, x_{2m} \right> 
	= \bigsqcup_{i=2}^{2m} (-1)^i J_1 \left< x_1, x_i \right> \times J_{m-1}\left<x_2,x_3,\ldots, x_{i-1}, x_{i+1}, \ldots, x_{2m} \right>,
\]
which is compatible with $\eta_1, \eta_2,\ldots, \eta_m$ and therefore with $\lvert \eta_1 \rvert, \lvert \eta_2 \rvert, \ldots, \lvert \eta_m \rvert$.
With each $$\sigma \in \supp{J_m \left<x_1,x_2,\ldots, x_{2m} \right>} = S_{2m}',$$ we associate $((\rho, \sigma'), i_0)$ where
\begin{gather}
i_0 = \max(\sigma_1, \sigma_2) \\
\rho = \begin{cases} (1,2) & (\sigma_1 = 1), \\ (2,1) & (\sigma_2=1), \end{cases} \\
\sigma'_i = \begin{cases} \sigma_{i+2}-1 & (\sigma_{i+2} < i_0), \\ \sigma_{i+2}-2 & (\sigma_{i+2} > i_0). \end{cases}
\end{gather}
\end{construction}
Here, we write $\bigsqcup_{u=a}^b A_u$ to mean that $\bigsqcup_{u \in \sint{a}{b+1}} A_u$.
\begin{construction} \label{construction::7-3}
Let $x, b \in \mathbb{Z}$. Then, we have a compatible sijection with respect to $\lvert \eta_1 \rvert$,
\begin{align*}
	\bigsqcup_{u=1}^b J_1\left<1,x+2u\right> =& \bigsqcup_{u=1}^b (\{\lvert x+2u-1-1 \rvert \}, \{ \lvert 1-(x+2u)-1 \rvert \}) \\
	=& \bigsqcup_{u=1}^b (\{\lvert x+2u-2 \rvert \}, \{ \lvert x+2u \rvert \}) \\
	\sijto& (\{\lvert x \rvert\}, \{ \lvert x+2b \rvert \}).
\end{align*}
\end{construction}
\begin{construction} \label{construction::7-4}
Let $b_1, b_2, y \in \mathbb{Z}$. Then, we construct a compatible sijection with respect to $\eta_1$,
\begin{align*}
	\bigsqcup_{u_1=1}^{b_1} \bigsqcup_{u_2=1}^{b_2}
	J_{1} \left< y-b_1+2u_1, y-b_2+2u_2 \right> \sijto (\emptyset,\emptyset).
\end{align*}
We associate $(((1,2), u_2), u_1)$ with $(((2,1), b_2+1-u_2), b_1+1-u_1)$.
This is a sign reversing involution on the LHS. Furthermore, we have 
\begin{align*}
	\eta_1 \left((((1,2), u_2), u_1)\right) &= (y-b_1+2u_1) - (y-b_2+2u_2)  \\
	&= b_2-b_1 + 2(u_1-u_2),
\end{align*}
and
\begin{align*}
	\eta_1 \left((((2,1), b_2+1-u_2), b_1+1-u_1)\right) &= (y-b_2+2(b_2+1-u_2)) - (y-b_1+2(b_1+1-u_1)) \\
	&= b_2-b_1 + 2(u_1-u_2).
\end{align*}
Therefore, this sijection is compatible with $\eta_1$.
%By applying Construction~\ref{construction::7-3}, the LHS can be organized as follows:
%\begin{align}
%(\text{LHS})
%=& \bigsqcup_{u_1=1}^{b_1} \bigsqcup_{u_2=1}^{b_2}
%	J_{1} \left< 1, (b_1-2u_1-b_2+1)+2u_2 \right> \notag \\
%\sijto& \bigsqcup_{u_1=1}^{b_1}
%	(\{ \lvert b_1-2u_1-b_2+1 \rvert \}, \{ \lvert b_1-2u_1+b_2+1 \rvert \}). \label{sij::7-4-1}
%\end{align}
%Here, we have
%\[
%	\lvert b_1-2u_1-b_2+1 \rvert = \lvert b_1-2(b_1+1-u_1)+b_2+1 \rvert.
%\]
%Therefore, the RHS of (\ref{sij::7-4-1}) is in sijection to the empty signed set, and this construction is trivially compatible with $\lvert \eta_1 \rvert$.
\end{construction}

\begin{construction} \label{construction::7-5}
Let $m \in \mathbb{Z}_{>0}$ and $k \in \mathbb{Z}_{\geq 0}$ satisfy $m \geq 2$, $k<m$ and $k \equiv m \pmod{2}$.
In addition, let $b_1,b_2,\ldots, b_m, x_1, x_2,\ldots,x_k,y \in \mathbb{Z}$.
Then, we construct a compatible sijection with respect to $\lvert \eta_1 \rvert, \lvert \eta_2 \rvert, \ldots, \lvert \eta_m \rvert$,
\[
	\bigsqcup_{u_1=1}^{b_1} \bigsqcup_{u_2=1}^{b_2} \cdots \bigsqcup_{u_m=1}^{b_m}
	J_{\frac{m+k}{2}} \left< x_1,x_2,\ldots,x_k, y-b_m+2u_m, y-b_{m-1}+2u_{m-1}, \ldots, y-b_1+2u_1 \right> \sijto (\emptyset,\emptyset).
\]
First, we consider the case when $m=2$. Then we have $k=0$ and therefore this case reduces to Construction~\ref{construction::7-4}, which is compatible with $\lvert \eta_1 \rvert$.
For the case $m>2$ we apply Construction~\ref{construction::7-2}. Then, the LHS is in the form of
\[
	\bigsqcup_{i} (-1)^i J_2\left<\ast\right> \times \left( \bigsqcup_{u_\ast} \cdots \bigsqcup_{u_\ast} J_{\frac{m+k-2}{2}}\left<\ast\right> \right),
\]
where $\bigsqcup_{u_\ast} \cdots \bigsqcup_{u_\ast} J_{\frac{m+k-2}{2}}\left<\ast\right>$ match the form of the LHS for the case $(m,k) = (m-2,k)$ or $(m-1,k-1)$.
Therefore, by induction, this is in sijection to 
\[
	\bigsqcup_{i} (-1)^i J_2\left<\ast\right> \times (\emptyset,\emptyset) = (\emptyset,\emptyset).
\]
The compatibilities follow from the construction.
\end{construction}

\begin{construction} \label{construction::7-6}
Let $m \in \mathbb{Z}_{>0}$ and $a_1,a_2,\ldots, a_m, b_1,b_2,\ldots, b_m \in \mathbb{Z}$.
Then, we construct a compatible sijection with respect to $\lvert \eta_1 \rvert, \lvert \eta_2 \rvert, \ldots, \lvert \eta_m \rvert$,
\begin{multline*}
	I_m\Bigl[\begin{smallmatrix}
    a_1,a_2,\ldots,a_m \\
    b_1,b_2,\ldots,b_m
  \end{smallmatrix}\Bigr] \sijto 
  \bigsqcup_{u_1=1}^{b_1} \bigsqcup_{u_2=1}^{b_2} \cdots \bigsqcup_{u_m=1}^{b_m} \\
  	J_m\left< 1,1+a_1-a_2,\ldots,1+a_1-a_m,a_1-b_m+2u_m,a_1-b_{m-1}+2u_{m-1},\ldots,a_1-b_1+2u_1\right>.
\end{multline*}

For the case $m=1$, by Construction~\ref{construction::7-3}, we have
\[
	\bigsqcup_{u_1=1}^{b_1} J_1\left<1,a_1-b_1+2u_1\right> \sijto (\{ \lvert a_1-b_1 \rvert \}, \{ \lvert a_1 + b_1 \rvert \}) = I_1\Bigl[\begin{smallmatrix}
    a_1 \\
    b_1
  \end{smallmatrix}\Bigr].
\]

For the case $m>1$, we define $J_m'\left<\ast\right>$ by 
\[
	J_m'\left<\ast\right>^\pm = \left\{ \sigma \in J_m \left< \ast \right>^\pm \mid \forall i \in \{1,2,\ldots,m\}, \min(\sigma_{2i-1}, \sigma_{2i}) \leq m \right\}.
\]
Note that since $\sigma$ is a permutation, $\sigma \in \supp{J_m' \left< \ast \right>}$ satisfies $\forall i \in \{1,2,\ldots,m\}, \max(\sigma_{2i-1}, \sigma_{2i}) \geq m+1$,
and we have a compatible sijection (cf. Construction \ref{construction::7-4})
\begin{multline*}
  (\text{RHS}) 
  \sijto
  \bigsqcup_{u_1=1}^{b_1} \bigsqcup_{u_2=1}^{b_2} \cdots \bigsqcup_{u_m=1}^{b_m} \\
  	J_m'\left< 1,1+a_1-a_2,\ldots,1+a_1-a_m,a_1-b_m+2u_m,a_1-b_{m-1}+2u_{m-1},\ldots,a_1-b_1+2u_1\right>.
\end{multline*}
Here, we associate $(\sigma, u_1, u_2, \ldots, u_m) \in \supp{ \sqcup \sqcup \cdots \sqcup J_m \left< \ast \right>} \setminus \supp{ \sqcup \sqcup \cdots \sqcup J_m' \left< \ast \right> }$ with
\[
	(\sigma', u_1', u_2', \ldots, u_m'),
\]
where let $j$ be the smallest index with $\min(\sigma_{2j-1} , \sigma_{2j}) \geq m+1$ and 
\begin{align*}
	\sigma'_i &= \begin{cases} \sigma_{2j} & (i = 2j-1), \\ \sigma_{2j-1} & (i = 2j), \\ \sigma_i & (\text{otherwise}), \end{cases} \\ 
	u_i' &= \begin{cases} b_i + 1 - u_ i & (i \in \{m+1-(\sigma_{2j-1}-m), m+1-(\sigma_{2j}-m)\}), \\ u_i & (\text{otherwise}). \end{cases}
\end{align*}

Applying a restriction to Construction~\ref{construction::7-2} to $J_m' \left< \ast \right>$, we have
\begin{multline*}
	(\text{RHS}) \sijto 
	\bigsqcup_{i=1}^{m}(-1)^{2m+1-i} \bigsqcup_{u=1}^{b_i} J_1\left<1,a_1-b_i + 2u\right> 
	\times \bigsqcup_{u_1=1}^{b'_1} \bigsqcup_{u_2=1}^{b'_2} \bigsqcup_{u_{m-1}=1}^{b'_{m-1}} J_{m-1}' \\
		\left<1+a_1-a_2, \ldots, 1+a_1-a_m, a_1-b'_{m-1}+2u_{m-1},a_1-b'_{m-2}+2u_{m-2},a_1-b'_1+2u_1 \right>,
\end{multline*}
where $b'_1,b'_2,\ldots,b'_{m-1}$ is the array obtained by removing $b_i$ from $b_1,b_2,\ldots,b_m$.
By a similar restriction of Construction~\ref{construction::7-1} to $J_m' \left< \ast \right>$ we have
\begin{multline*}
	J_{m-1}' \left<1+a_1-a_2, \ldots, 1+a_1-a_m, a_1-b'_{m-1}+2u_{m-1},a_1-b'_{m-2}+2u_{m-2},a_1-b'_1+2u_1 \right> \\
	\sijto J_{m-1}' \left<1, \ldots, 1+a_2-a_m, a_2-b'_{m-1}+2u_{m-1},a_2-b'_{m-2}+2u_{m-2},a_2-b'_1+2u_1 \right>,
\end{multline*}
and thus, by induction, from the case for $m-1$ and Construction~\ref{construction::7-3} we obtain
\[
	(\text{RHS}) \sijto 
	\bigsqcup_{i=1}^{m}(-1)^{i-1} (\{ \lvert a_1-b_i \rvert \}, \{ \lvert a_1+b_i \rvert \})
	\times I_m\Bigl[\begin{smallmatrix}
    a_2,\ldots,a_m \\
    b'_1,b'_2,\ldots,b'_{m-1}
  \end{smallmatrix}\Bigr],
\]
whose RHS is in sijection to $I_m\Bigl[\begin{smallmatrix}
    a_1,a_2,\ldots,a_m \\
    b_1,b_2,\ldots,b_m
  \end{smallmatrix}\Bigr]$, by definition.
The compatibilities follow from the construction.
\end{construction}

\begin{construction} \label{construction::7-7}
Let $m \in \mathbb{Z}_{>0}$ and $x_1,x_2,\ldots, x_{2m} \in \mathbb{Z}$.
When $x_i = x_j$ for $1 \leq i < j \leq 2m$, we obtain a compatible sijection
\[
	J_m \left< x_1,x_2,\ldots,x_{2m} \right> \sijto (\emptyset,\emptyset),
\]
by associating $\sigma \in S'_{2m} = \supp{J_m\left<\ast\right>}$ with $[(i\,j) \circ \sigma]$,
where $(i\,j)$ is the transposition and $[\tau]$ means the representative of the $S_m$-orbit of $\tau$ under the action described in (\ref{def::action}).
In addition, if $i>1$, then this sijection is compatible with $\eta_1$.
\end{construction}

\begin{construction} \label{construction::7-8}
In the setting of Construction~\ref{construction::7-6}, we set $a_i = b_i = m+1-i$.
Then, the sijection is organized as:
\begin{align*}
	\hspace{1cm} & \hspace{-1cm}I_m\Bigl[\begin{smallmatrix}
    m,m-1,\ldots,1 \\
    m,m-1,\ldots,1
  \end{smallmatrix}\Bigr] \\
  \sijto& 
  \bigsqcup_{u_1=1}^{b_1} \bigsqcup_{u_2=1}^{b_2} \cdots \bigsqcup_{u_m=1}^{b_m} 
  	J_m\left< 1,2,\ldots,m ,a_1-b_m+2u_m,a_1-b_{m-1}+2u_{m-1},a_1-b_1+2u_1\right> \\
  \sijto& 
  \bigsqcup_{u_1=1}^{b_1} \bigsqcup_{u_2=1}^{b_2} \cdots \bigsqcup_{u_m=1}^{b_m} 
  	J_m\left< 1,2,\ldots,m ,m-1+2u_m, m-2+2u_{m-1},m-m+2u_1\right> \\
=& \bigsqcup_{c_1 = m+1} \bigsqcup_{c_2 = m, m+2} \bigsqcup_{c_3 = m-1, m+1, m+3} \cdots \bigsqcup_{c_m = 2,4,\ldots, 2m} J_m\left<1,2,\ldots,m,c_1,c_2,\ldots,c_m\right>.
\end{align*}
Except in the case when $c_i = m+i$ for all $i =1,2,\ldots,m$, the signed set $J_m\left<1,2,\ldots,m,c_1,c_2,\ldots,c_m\right>$ is in sijection to $(\emptyset,\emptyset)$, by Construction~\ref{construction::7-7}. If there are two or more applicable pairs $(i,j)$ at which we can apply $\varphi_\text{\ref{construction::7-7}}$, we choose a minimum pair in lexicographic order.
Therefore, we obtain the desired sijection
\[
	I_m\Bigl[\begin{smallmatrix}
    m,m-1,\ldots,1 \\
    m,m-1,\ldots,1
  \end{smallmatrix}\Bigr] 
  \sijto J_m \left<1,2,\ldots,2m\right>,
\]
together with the family of permutations $\{\sigma_s\}$, which can be recovered from the construction (and, in particular, from the details of Construction~\ref{construction::7-7}).
\end{construction}

\begin{example} \label{example::7-9}
We calculate an element of $J_3 \left< 1,2,\ldots, 6 \right>$ that corresponds to $( (1,2,3), (0,0,0) ) \in$ $I_3\Bigl[\begin{smallmatrix}
    3,2,1 \\
    3,2,1
  \end{smallmatrix}\Bigr] $. This example will be used in the next section, where we explicitly calculate an SPP corresponding to a particular given  QTCPP.
  
Let $\varphi_{\ref{construction::7-6}}$ be the sijection $$I_3\Bigl[\begin{smallmatrix}
    3,2,1 \\
    3,2,1
  \end{smallmatrix}\Bigr]
  \sijto
  \bigsqcup_{u_1=1}^{b_1} \bigsqcup_{u_2=1}^{b_2} \bigsqcup_{u_3=1}^{b_3} 
  	J_m\left< 1,2,3 ,2+2u_3, 1+2u_2,2u_1\right>  $$ used in Construction \ref{construction::7-8},
and let $\varphi_{\ref{construction::7-7}}$ be the sijection $$
  \bigsqcup_{u_1=1}^{b_1} \bigsqcup_{u_2=1}^{b_2} \bigsqcup_{u_3=1}^{b_3} 
  	J_m\left< 1,2,3 ,2+2u_3, 1+2u_2,2u_1\right> 
\sijto J_m \left< 1,2,\ldots, 6 \right>.$$
We set $\varphi_{\ref{construction::7-8}} = \varphi_{\ref{construction::7-7}} \circ \varphi_{\ref{construction::7-6}}$. Then, we have
\begin{multline*}
( (1,2,3), (0,0,0) ) \in I_m\Bigl[\begin{smallmatrix}
    3,2,1 \\
    3,2,1
  \end{smallmatrix}\Bigr]
\overset{(1)}{\underset{\varphi_{\ref{construction::7-6}}}{\mapsto}}
\bigl( (1,6,2,5,3,4),\, u=(1,1,1) \bigr)\\
\overset{(2)}{\underset{\varphi_{\ref{construction::7-7}}}{\mapsto}}
\bigl( (1,6,2,3,5,4),\, u=(1,1,1) \bigr)
\overset{(3)}{\underset{\varphi_{\ref{construction::7-6}}}{\mapsto}}
\bigl( (1,6,2,3,4,5),\, u=(1,2,1) \bigr)
\overset{(4)}{\underset{\varphi_{\ref{construction::7-7}}}{\mapsto}}
\bigl( (1,2,6,3,4,5),\, u=(1,2,1) \bigr) \\
\overset{(5)}{\underset{\varphi_{\ref{construction::7-6}}}{\mapsto}}
\bigl( (1,2,6,3,5,4),\, u=(1,1,1) \bigr)
\overset{(6)}{\underset{\varphi_{\ref{construction::7-7}}}{\mapsto}}
\bigl( (1,2,3,4,6,5),\, u=(1,1,1) \bigr)
\overset{(7)}{\underset{\varphi_{\ref{construction::7-6}}}{\mapsto}}
\bigl( (1,2,3,4,5,6),\, u=(3,2,1) \bigr) \\
\overset{(8)}{\underset{\varphi_{\ref{construction::7-7}}}{\mapsto}}
(1,2,3,4,5,6) \in J_m\left<1,2,\ldots,6\right>.
\end{multline*}
\begin{itemize}
\item[(1)]
The sijection $\varphi_\text{\ref{construction::7-6}}$ sends $$(\sigma, t) \in I_m\Bigl[\begin{smallmatrix} m,m-1,\ldots, 1 \\m,m-1,\ldots,1\end{smallmatrix}\Bigr]$$ to
$$(\sigma', u) \in  \bigsqcup_{u_1=1}^{b_1} \bigsqcup_{u_2=1}^{b_2} \cdots \bigsqcup_{u_m=1}^{b_m} 
  	J_m\left< 1,2,\ldots,m ,a_1-b_m+2u_m,a_1-b_{m-1}+2u_{m-1},a_1-b_1+2u_1\right>,$$
where
\begin{align*} \{\sigma_{2i-1}, \sigma_{2i}\} = \{ i, 2m+1-\sigma_i \},&& \sigma_{2i-1} < \sigma_{2i} \Leftrightarrow t_i = 0. \end{align*}

\item[(2)]
The LHS is in the component $J_3\langle 1,2,3,4,3,2\rangle$, since $u=(1,1,1)$.
Therefore, $\varphi_\text{\ref{construction::7-7}}$ swaps $3$ and $5$.

\item[(3)]
Let $(\sigma, u)$ be the LHS. Since $\min(\sigma_5, \sigma_6) > 3$, $\varphi_\text{\ref{construction::7-6}}$ swaps $\sigma_5$ and $\sigma_6$,
and replaces $u_i$ with $b_i+1-u_i$ for $i = \sigma_5 - 3, \sigma_6-3$.

\item[(4)]
Since $u=(1,2,1)$, the LHS is in $J_3\langle 1,2,3,4,5,2\rangle$. Therefore, $\varphi_\text{\ref{construction::7-7}}$ swaps $2$ and $6$.

\item[(5)]
Similar as (3).

\item[(6)]
Since $u=(1,1,1)$, similar as in (2), we swap $3$ and $5$, and obtain $\sigma=(1,2,6,5,3,4)$.
Because $\min(\sigma_3, \sigma_4) > \min(\sigma_5, \sigma_6)$, we swap these elements and obtain $\sigma=(1,2,3,4,6,5)$.
Due to this operation, the refinement of this sijection is $(\begin{smallmatrix} 1 2 3 \\ 1 3 2 \end{smallmatrix})$.

\item[(7)]
Similar as (3).

\item[(8)]
Since $u=(3,2,1)$, the LHS is in $J_3\langle 1,2,3,4,5,6\rangle$.
\end{itemize}
Since only part (6) involves the refinement, the refinement of this sijection is $(\begin{smallmatrix} 1 2 3 \\ 1 3 2 \end{smallmatrix})$.

\medskip
Similarly, we can calculate $\varphi_{\ref{construction::7-8}}\bigl(((1,3,2),(0,0,0))\bigr)$ as follows:
\begin{multline*}
( (1,3,2), (0,0,0) ) \in I_m\Bigl[\begin{smallmatrix}
    3,2,1 \\
    3,2,1
  \end{smallmatrix}\Bigr] \underset{\varphi_{\ref{construction::7-6}}}{\mapsto} ( (1,6,2,4,3,5), u=(1,1,1) )\\
\underset{\varphi_{\ref{construction::7-7}}}{\mapsto} ( (1,6,2,4,5,3), u=(1,1,1) ) 
\overset{(\ast)}{\underset{\varphi_{\ref{construction::7-6}}}{\mapsto}} ( (1,6,2,4,3,5), u=(1,2,1) )
\overset{(\#)}{\underset{\varphi_{\ref{construction::7-7}}}{\mapsto}} ( (1,2,3,5,6,4), u=(1,2,1) ) \\
\underset{\varphi_{\ref{construction::7-6}}}{\mapsto} ( (1,2,3,5,4,6), u=(3,2,1) ) 
\underset{\varphi_{\ref{construction::7-7}}}{\mapsto} (1,2,3,5,4,6) \in J_m\left<1,2,\ldots,6\right>.
\end{multline*}
At (*), we have $\min(\sigma_{2i-1}, \sigma_{2i}) = i$ for all $i \in \{1,2,3\}$, $\sigma_1 < \sigma_2$ with $u_{\max(\sigma_1,\sigma_2)-3} = 1$, $\sigma_3 < \sigma_4$ with $u_{\max(\sigma_3,\sigma_4)-3} = 1$, and $\sigma_5 > \sigma_6$ but $u_{\max(\sigma_5,\sigma_6)-3} = 1 \ne b_{\max(\sigma_5,\sigma_6)-3}$.
Therefore, we swap $\sigma_5$ and $\sigma_6$, and replace $u_{\max(\sigma_5,\sigma_6)-3}$ with $u_{\max(\sigma_5,\sigma_6)-3} + 1$, according to Construction~\ref{construction::7-3}. The remaining steps are similar to the previous example.
Only (\#) involves the refinement, which is $(\begin{smallmatrix} 1 2 3 \\ 1 3 2 \end{smallmatrix})$.
\end{example}

\section{A Full Example}

In this section, we illustrate the entire construction of the bijection $f \colon \QTCPP{n}{M} \to \SPP{n}{M}$ by explicitly computing
$f\left(\pi := \,\vcenter{\hbox{\young(6553,651,52,5)}}\,\right)$, where $n = 4$ and $M = 6$.
First, we apply $f_{\ref{1-2s::qtc-sta-even}}$. Consequently, we obtain
\begin{align*}
	f_{\ref{1-2s::qtc-sta-even}}(\pi) = \left(\,\vcenter{\hbox{\young(3220,322,21,2)}}\,, (\cdot, 1,1,0)\right) =: (\pi', t),
%	f_{\ref{1-2s::qtc-sta-even}}(\pi) &= (\pi', t), &
%	\pi' &=  \,\vcenter{\hbox{\young(3220,322,21,2)}}\,, &
%	t &= (\cdot, 1,1,0),
\end{align*}
where $\pi' \in \stairPP{4}{3}$, $S(\pi') = \{2,3,4\}$, and $t \colon S(\pi') \to \{0,1\}$.

Next, we apply the sijection $\varphi_{5.1} \colon (\stairPP{4}{3}, S) \sijto (\stairPP{4}{3}, \widetilde{S})$ constructed in Section 5.1.
First, $\pi'$ corresponds to a set of lattice paths:
\[
    \begin{tikzpicture}[scale=0.6, baseline=(current bounding box.center)]
        \fill[red] (1,-1) circle (2pt);
        \node[above] at (1,-1) {$a_1$};
        \fill[red] (2,-2) circle (2pt);
        \node[above] at (2,-2) {$a_2$};
        \fill[red] (3,-3) circle (2pt);
        \node[above] at (3,-3) {$a_3$};
        \fill[red] (4,-4) circle (2pt);
        \node[above] at (4,-4) {$a_4$};
        \fill[red] (-3,-4) circle (2pt);
        \node[below] at (-3,-4) {$b_1$};
        \fill[red] (-1,-5) circle (2pt);
        \node[below] at (-1,-5) {$b_2$};
        \fill[red] (1,-6) circle (2pt);
        \node[below] at (1,-6) {$b_3$};
        \fill[red] (3,-7) circle (2pt);
        \node[below] at (3,-7) {$b_4$};
        \draw[blue, thick](1,-1)--(-1,-1)--(-1,-2)--(-3,-2)--(-3,-4);
        \draw[blue, thick](2,-2)--(2,-3)--(0,-3)--(0,-4)--(-1,-4)--(-1,-5);
        \draw[blue, thick](3,-3)--(3,-4)--(1,-4)--(1,-6);
        \draw[blue, thick](4,-4)--(4,-7)--(3,-7)--(3,-7);
    \end{tikzpicture}.
\]

To apply the LGV lemma, we must fix the order of vertices. We consider the edges to be directed from north-east to south-west and we order the vertices by
\[
	(x_1, y_1) < (x_2, y_2) \Leftrightarrow x_1+y_1 > x_2+y_2 \lor (x_1+y_1=x_2+y_2 \land x_1>x_2).
\]

Then, the lattice paths are transformed as follows (where purple squares mean minimum intersections at which the LGV lemma sijection switch paths,
and red circles, triangles and crosses mean the involved paths are associated with $2,3$ and $4 \in S(\pi')$, respectively):
\begin{figure}[h]
\begin{center}
\includegraphics{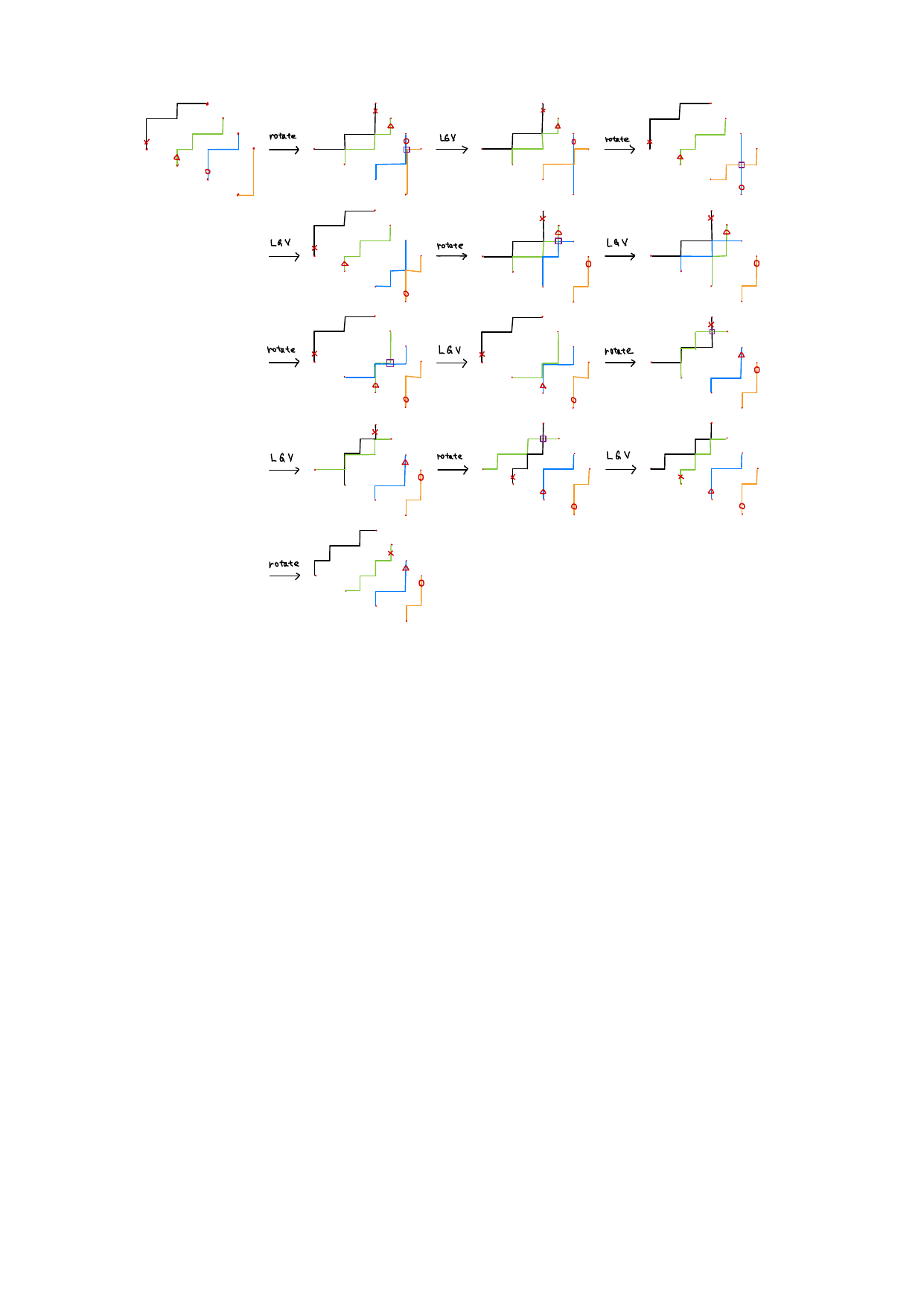}
\end{center}
\vspace{-25pt}
\caption{An example of the use of the double LGV lemma technique.}
\label{fig::4}
\end{figure}

Consequently, we obtain $\varphi_{5.1}(\pi') = \,\vcenter{\hbox{\young(3211,211,20,1)}}\,=: \pi''$, and the associated statistics and refinement are
\begin{align*}
	S(\pi') = \{2,3,4\},&& \widetilde{S}(\pi'') = \{2,3,4\},&& r \colon 2 \mapsto 4, 3 \mapsto 3, 4 \mapsto 2.
\end{align*}
Next, we apply the sijection
\begin{multline*}
	\Phi = \varphi_6 \circ \varphi_7 \circ \varphi_5 \colon
	(\stairPP{4}{3}, \widetilde{S})
	\overset{\varphi_5}{\sijto} \left(
  \bigsqcup_{\alpha \in I_3\Bigl[\begin{smallmatrix}
    3,2,1 \\ 3,2,1
  \end{smallmatrix}\Bigr]}
  \prod_{i=1}^3 \mathcal{C}(2n,\, n+\lvert \eta_i \rvert(\alpha)), \,\,\widetilde{S}  \right) \\
  \overset{\varphi_7}{\sijto}
  \left( \bigsqcup_{\sigma \in J_3\left<1,2,\ldots,6\right>} \prod_{i=1}^3 \mathcal{C}(2n,n+\lvert\eta_i\rvert(\sigma)),\,\, S  \right)
	\overset{\varphi_6}{\sijto} (\eSPP{4}{3}, S),
\end{multline*}
where $\varphi_5$, $\varphi_6$ and $\varphi_7$ are the sijections constructed in Sections 5, 6 and 7, respectively.
First, $\pi''$ is associated with $\alpha = ((1,2,3), (0,0,0)) \in I_3$ and paths
\[
    \begin{tikzpicture}[scale=0.6, baseline=(current bounding box.center)]
        \fill[red] (1,-1) circle (2pt);
        \node[below] at (1,-1) {$a_1$};
        \fill[red] (2,-2) circle (2pt);
        \node[below] at (2,-2) {$a_2$};
        \fill[red] (3,-3) circle (2pt);
        \node[below] at (3,-3) {$a_3$};
        \fill[red] (5,3) circle (2pt);
        \node[above] at (5,3) {$b_1$};
        \fill[red] (6,2) circle (2pt);
        \node[above] at (6,2) {$b_2$};
        \fill[red] (7,1) circle (2pt);
        \node[above] at (7,1) {$b_3$};
        \draw[black, thick](1,-1)--(1,2)--(2,2)--(2,3)--(5,3);
        \draw[orange, thick](2,-2)--(2,-1)--(3,-1)--(3,1)--(4,1)--(4,2)--(6,2);
        \draw[blue, thick](3,-3)--(4,-3)--(4,-1)--(6,-1)--(6,0)--(7,0)--(7,1);
    \end{tikzpicture}.
\]

According to Example \ref{example::7-9}, $\varphi_7$ sends this to $\beta = (1,2,3,4,5,6) \in J_3$ and paths
\[
    \begin{tikzpicture}[scale=0.6, baseline=(current bounding box.center)]
        \fill[red] (1,-1) circle (2pt);
        \node[below] at (1,-1) {$a_1$};
        \fill[red] (2,-2) circle (2pt);
        \node[below] at (2,-2) {$a_2$};
        \fill[red] (3,-3) circle (2pt);
        \node[below] at (3,-3) {$a_3$};
        \fill[red] (5,3) circle (2pt);
        \node[above] at (5,3) {$b_1$};
        \fill[red] (6,2) circle (2pt);
        \node[above] at (6,2) {$b_2$};
        \fill[red] (7,1) circle (2pt);
        \node[above] at (7,1) {$b_3$};
        \draw[black, thick](1,-1)--(1,2)--(2,2)--(2,3)--(5,3);
        \draw[orange, thick](3,-3)--(3,-2)--(4,-2)--(4,0)--(5,0)--(5,1)--(7,1);
        \draw[blue, thick](2,-2)--(3,-2)--(3,0-.05)--(5+.05,0-.05)--(5+.05,1-.05)--(6,1-.05)--(6,2);
    \end{tikzpicture}.
\]

Note that the order of the paths is permuted  by $\sigma = \left(\begin{smallmatrix} 1 2 3 \\ 1 3 2 \end{smallmatrix} \right)$, a permutation associated with $\alpha$ (and $\beta$) defined in \eqref{sigma7}.
By cutting, translating and reflecting, these lattice paths are associated with
\begin{figure}[h]
\begin{center}
\includegraphics[scale=.8]{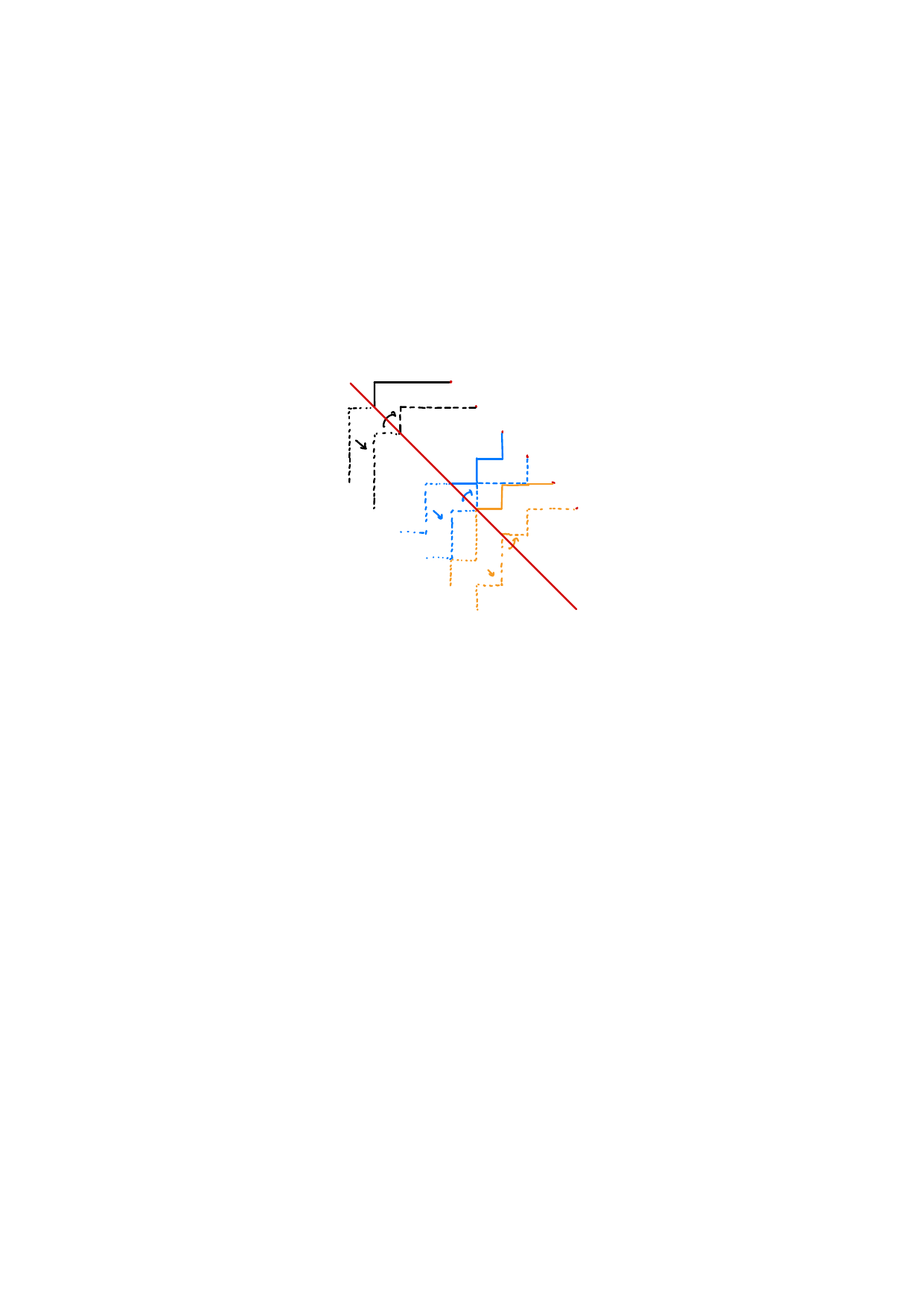}
\vspace{-20pt} 
\end{center}
\caption{Cutting, translating and reflecting lattice paths as explained in Section~5.}
\label{fig::5}
\end{figure}

We consider the edges to be directed from south-west to north-east in the construction of $\varphi_6$, and we order the vertices by
\[
	(x_1, y_1) < (x_2, y_2) \Leftrightarrow x_1+y_1 < x_2+y_2 \lor (x_1+y_1=x_2+y_2 \land x_1<x_2).
\]
Then, the lattice paths is transformed as:
\begin{center}
\includegraphics[scale=.8]{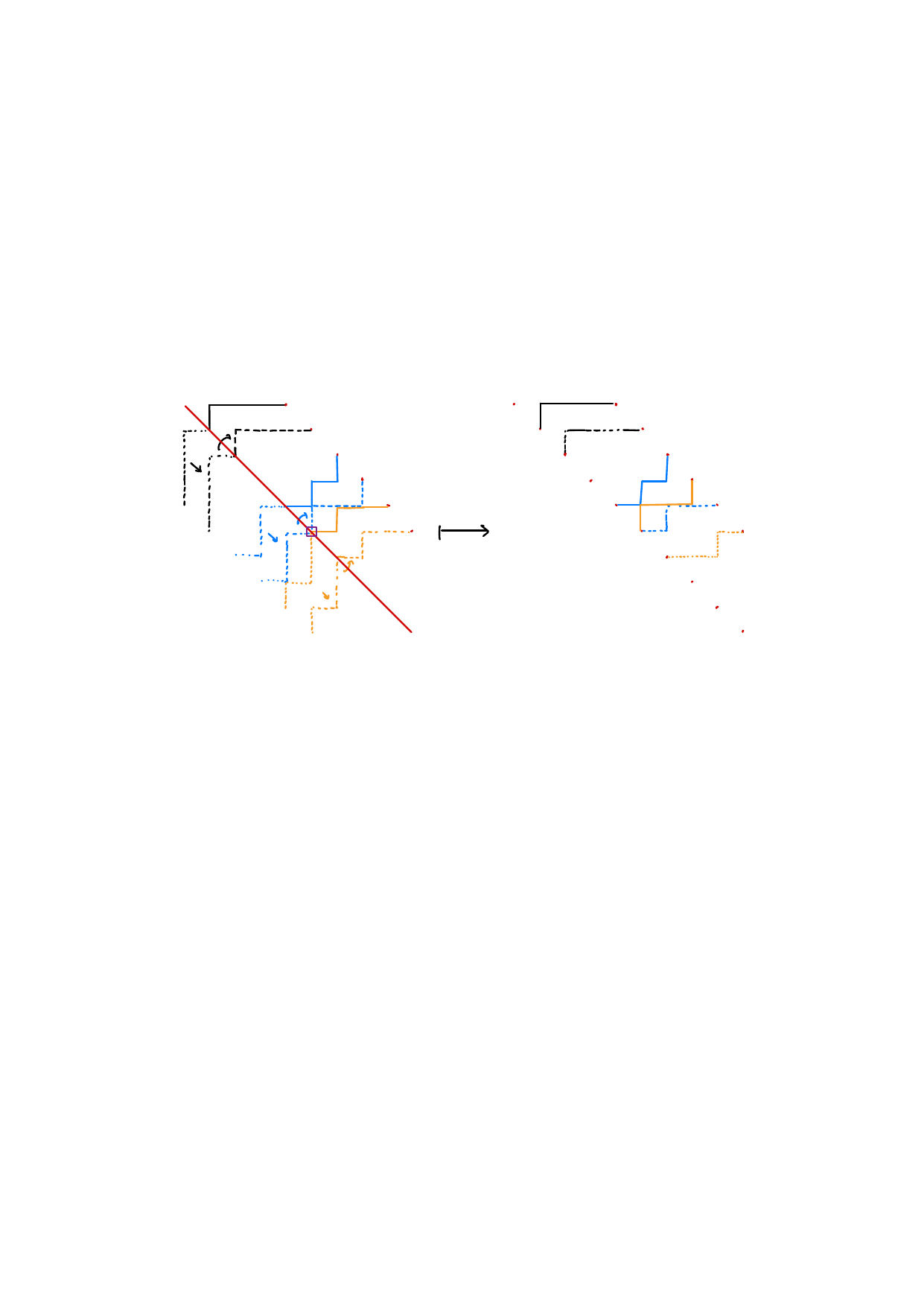}
\end{center}
Note that since the starting points are not distinct in this example, the LGV lemma sijection switches $P_4$ (blue) and $P_5$ (dotted orange).
Since $\beta$ designates the order of the endpoints, the LGV lemma sijection transforms $\beta \to \beta' = (1,2,3,5,4,6)$.
By reflecting, translating and concatenating, the lattice paths are transformed as:
\begin{center}
\includegraphics[scale=.8]{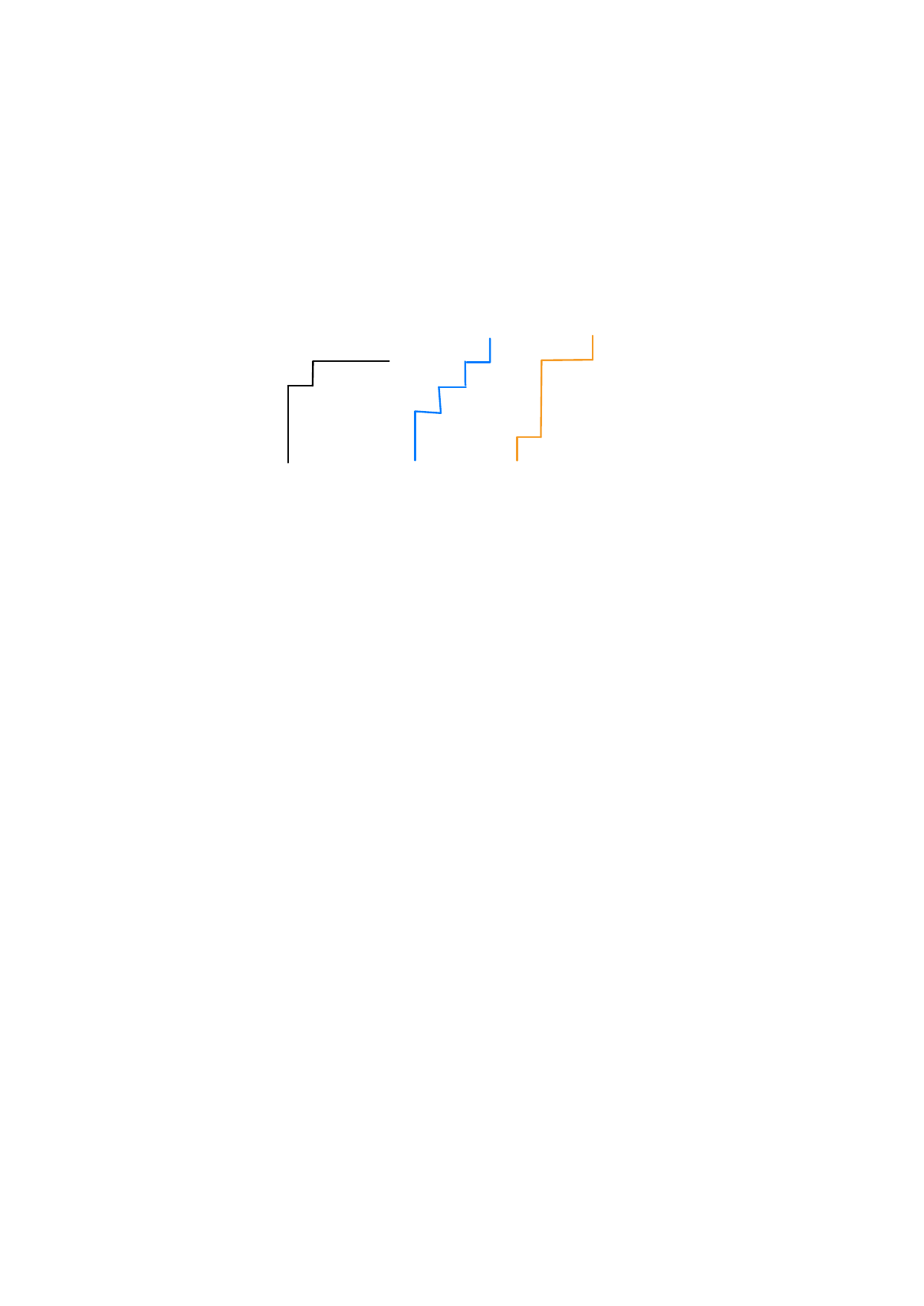}
\end{center}
By applying $\varphi_7$, according to Example \ref{example::7-9}, we obtain $\alpha' = ((1,3,2),(0,0,0))$ and lattice paths:
\begin{center}
\includegraphics[scale=.8]{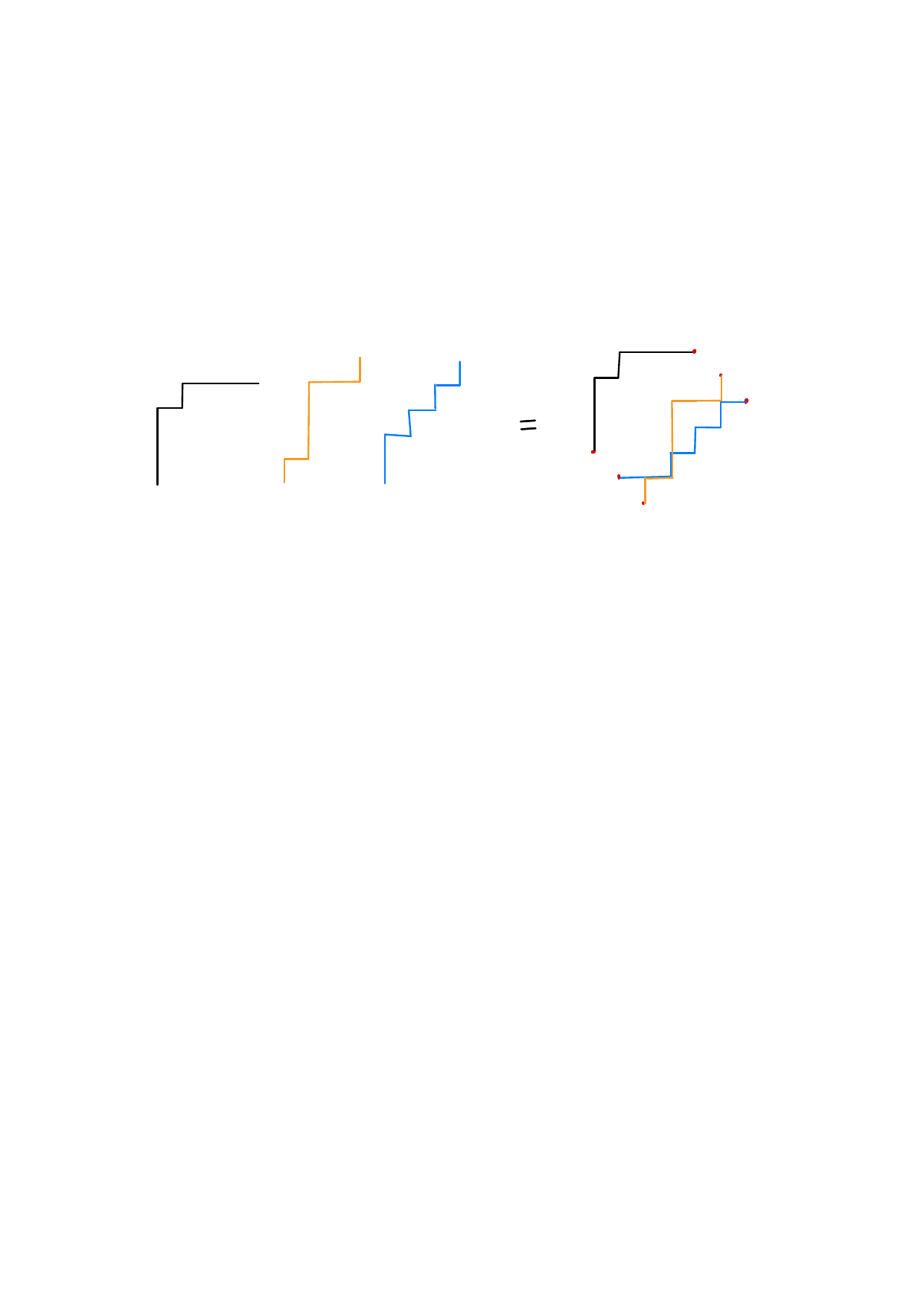}
\end{center}
Subsequently, by applying $\varphi_5$ and $\varphi_7$, this is transformed as follows:
\begin{center}
\includegraphics[scale=.8]{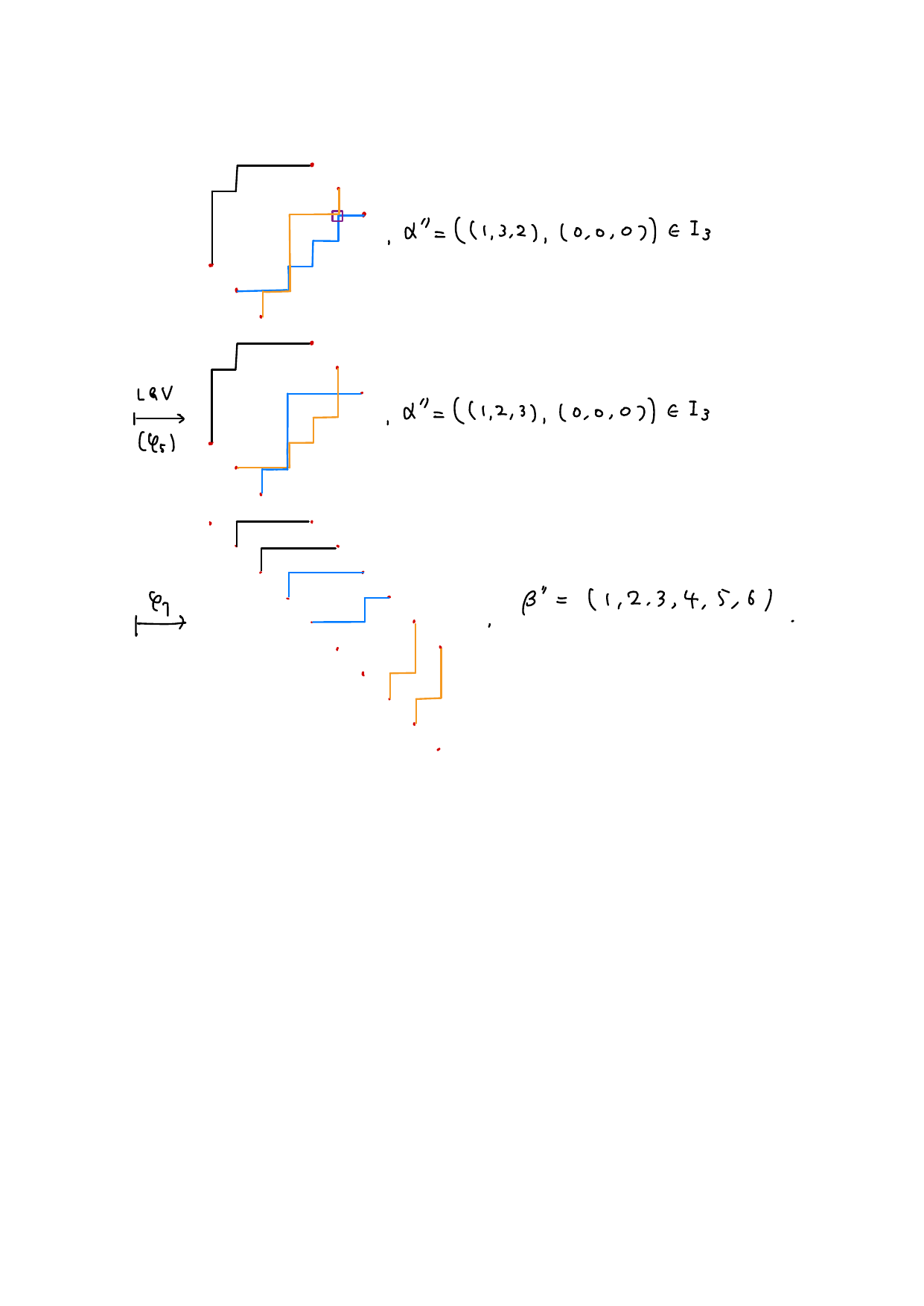}
\end{center}
Therefore, by applying $\varphi_6$, we obtain $\Phi(\pi'') = \,\vcenter{\hbox{\young(6332,:222,::20,:::0)}}\,=: \pi''' \in \eSPP{4}{3}$.
In addition, $\widetilde{S}(\pi'') = S(\pi''')$ and the refinement involving $\Phi$ is the identity map for $\pi''$.
Consequently, the refinement involving the full sijection $\Phi \circ \varphi_{5.1}$ is $\id \circ\, r = r$ for $\pi'$.

Finally, we calculate $f_{\ref{construction::spp-espp}}^{-1}\left( \pi''', t \circ r^{-1} = (\cdot, 0,1,1)\right)$,
where $f_{\ref{construction::spp-espp}}$ is the bijection between $\SPP{n}{2m}$ and $\bigsqcup_{\pi' \in \eSPP{n}{m}} \{0,1\}^{S(\pi')}$ constructed in Construction~\ref{construction::spp-espp}.
The result is 
\begin{align*}
\phi_\text{conj}^{-1}\left( \left(\phi_\text{conj}\left( \,\vcenter{\hbox{\young(6332,:222,::20,:::0)}}\, \right) \leftarrow 3\right) \leftarrow 4 \right)
&= \phi_\text{conj}^{-1}\left( \left(\,\vcenter{\hbox{\young(443111,33,11)}}\, \leftarrow 3\right) \leftarrow 4 \right) \\
&= \phi_\text{conj}^{-1}\left( \,\vcenter{\hbox{\young(443311,331,11)}}\, \leftarrow 4 \right) \\
&= \phi_\text{conj}^{-1}\left( \,\vcenter{\hbox{\young(444311,333,111)}}\, \right) \\
&= \,\vcenter{\hbox{\young(6443,:333,::30,:::0)}}\,.
\end{align*}

Hence, the symmetric plane partition $\,\vcenter{\hbox{\young(6443,4333,4330,3300)}}\,$ corresponds to the quasi transpose complementary plane partition
$\,\vcenter{\hbox{\young(6553,6511,5211,5100)}}\,$, under the constructed sijection.

\section{Conclusion} \label{sec:conclusion}

In this paper we constructed an explicit bijection between $\SPP{n}{M}$ and $\QTCPP{n}{M}$, thus resolving the bijective problem raised by Schreier--Aigner and Proctor. The main idea was to combine the $1:2^{\#S}$ correspondences on both sides with the double--LGV technique, and to organize the resulting structure via the signed index sets $I_m$ and $J_m$. This framework not only recovers the known equinumerosity results of Proctor, but also enhances them with compatibility,
which enabled us to construct the present bijection.

\medskip

\noindent
\textbf{Future directions.}
Several natural problems remain open:
\begin{itemize}
  \item The combinatorial sets $I_m$ and $J_m$ deserve further study. In particular, the structure of $J_m$ strongly suggests a connection with Pfaffian evaluations. We expect applications to classical Pfaffian formulas.
  Moreover, the study of the signed generating functions of $I_m$ and $J_m$ presents another interesting direction.
  \item Our method should extend to other bijection problems, especially those involving reflecting boundaries or parity restrictions. Examples include TSSCPPs, Gog--Magog correspondences, and further symmetry classes.
  \item It would be desirable to simplify the current constructions and discover additional compatibilities.
  For example, if we fully understand the structure of $\{\sigma_s\}_{s \in \supp{I_m} \bigsqcup \supp{J_m}}$ in Section~7, we can construct $m$ (independent) statistics from the construction.
  At least, we should be able to generate the second statistics, since the multi-set of paths are preserved.
\end{itemize}

\medskip

Overall, the framework of signed sets and compatible sijections, together with the new indexing objects $I_m$ and $J_m$, offers a flexible tool that we hope will stimulate further developments in bijective combinatorics and its connections with determinantal and Pfaffian structures.

%\subsection{The Entire Structure of the Sijection between QTCPPs and SPPs}

%example
%\subsection{Further Discussions}

\section*{Acknowledgements}
I would like to thank my supervisor, Prof. Ralph Willox, for helpful discussions and comments on the manuscript.
The author is partially supported by FoPM, WINGS Program, the University of Tokyo and JSPS KAKENHI No.\ 23KJ0795.

\end{document}